\documentclass[10pt]{article}

\usepackage{authblk}

\usepackage{amsmath,amssymb}
\usepackage{amsmath, amssymb}
\usepackage{amsmath}
\usepackage{isotope}
\usepackage{dirtytalk}
\usepackage{amsbsy}
\usepackage{newclude}
\usepackage{nicefrac,xfrac}
\usepackage{bbm}
\usepackage{float}
\usepackage{makecell}
\usepackage{amsmath, amssymb}
\usepackage[utf8]{inputenc}

\usepackage[utf8]{inputenc}
\usepackage{hyperref}
\usepackage{tikz}
\usepackage{amssymb}
\usepackage{authblk}
\usepackage[maxbibnames=99,
backend=biber,
sorting=nyt
]{biblatex}
\newcommand\norm[1]{\lVert#1\rVert}
\usepackage{amsmath}
\usepackage{accents}
\newlength{\dhatheight}

\usepackage[T1]{fontenc}
\usepackage{amsmath, amssymb, amsfonts, amsthm, caption, framed, enumerate, mathtools, fullpage, subcaption, siunitx, tabularx, verbatim, relsize, stackrel, cancel, multicol, media9, xcolor}

\usepackage[normalem]{ulem}

\providecommand{\keywords}[1]
{	
  \textbf{\textit{Keywords:}} #1
}

\providecommand{\MSC}[1]
{	
  \textbf{\textit{MSC2020 subject classification:}} #1
}

\usepackage{booktabs}

\newtheorem{theorem}{Theorem}[section]
\newtheorem{lemma}[theorem]{Lemma}

\newtheorem{corollary}[theorem]{Corollary}

\newtheorem{proposition}[theorem]{Proposition}
\theoremstyle{definition}
\newtheorem{definition}[theorem]{Definition}
\theoremstyle{remark}
\newtheorem{remark}[theorem]{Remark}

\title{The on/off Brownian snake}
\author{Matthew Buckland*, Dave Jacobi**}
\affil{\small*Department of Statistics, University of Warwick, matt.buckland@warwick.ac.uk; **Institut für Mathematik, Technische Universität Berlin, Institut für Mathematik, Freie Universität Berlin, jacobi@math.tu-berlin.de}
\begin{document}

\maketitle

\begin{abstract}
    We define what we call an on/off Brownian snake. We use this to construct on/off super Brownian motion recently introduced to the literature by Blath and Jacobi and which is a measure-valued branching process with a dormant state and an active state. Our construction mirrors the construction of super Brownian motion from the Brownian snake by Le Gall. We use the on/off Brownian snake to obtain results concerning the support, range, and expected total mass of on/off super Brownian motion. 
\end{abstract}
\small
\keywords{On/off super-Brownian motion, Brownian snake, Hausdorff dimension, Branching Brownian motion, superprocess}
\newline
\MSC{Primary: 60J68, 60J80, 60J85; Secondary: 60B10, 60J25, 60J55, 60G57, 60J65, 92D25}

%Added: 60J80 (branching processes, galton watson processes), 60J55 (local time and additive processes), 60G57 (random measures), 60J65 (Brownian motion),  60J25 (Continuous-time Markov processes on general state spaces)

% \keywords{}
% \MSC{}
\normalsize

% \tableofcontents

\section{Introduction}\label{section snake introduction}
Superprocesses are measure-valued branching processes, see e.g. Etheridge \cite{etheridge2000introduction}. These processes typically arise as scaling limits of branching Markov processes. Super-Brownian motion (SBM) is the limit of branching Brownian motion (BBM) which has the spatial motion of Brownian motion and a binary branching mechanism (i.e. produces zero offspring with probability $1/2$ and produces two offspring with probability $1/2$). SBM can be constructed from Le Gall's Brownian snake, \cite{legall1993class}, which is a path-valued process. This construction provides a fascinating description of classical super-Brownian motion and ultimately a deep insight into the superprocess itself. The aim of this paper is to use an analogous path-valued process which we call the on/off Brownian snake to uncover properties about the long-term behaviour of the superprocess known as on/off super-Brownian motion (ooSBM), which was recently defined in \cite{blath2023off} and is motivated by interest in the effects of dormancy on populations. The superprocess ooSBM has an active state which follows the dynamics of the classical superprocess SBM and a dormant state which exhibits no branching or spatial motion. Mass is able to switch between the states and this provides unique characteristics, such as almost-sure long-term survival, but much remains unknown about the superprocess. In this paper, we provide results about the support and range of ooSBM and about how the expected total masses of the active and dormant components of the superprocess vary over time.
\par
Since the original paper on the Brownian snake, \cite{legall1993class}, this snake-type description of superprocesses has been extended to include superprocesses that are scaling limits of branching Markov processes with discontinuous spatial motion, Bertoin, Le Gall and Le Jan \cite{BertoinLeGallLeJan1997}, and also to describe superprocesses that are scaling limits of branching Markov processes with a non-binary branching mechanism, Le Gall \cite{le1999spatial} (the snakes in this second case are known as L\'evy snakes). There are further generalisations of superprocesses and snake constructions, see e.g. \cite{abraham2002poisson, delmas2003super}. 
\par
The superprocess ooSBM is the scaling limit of on/off branching Brownian motion (ooBBM) which is a branching Markov process with a feature where particles switch after exponentially distributed times between an active state where they move and branch and a dormant state where the particle is fixed in a spatial location and does not branch. While the spatial motion for ooBBM within the active state and the dormant state is continuous, the switching itself is discontinuous as the particle moves from one state to the other. The switching between active and dormant states could also be interpreted as a branching event where an active particle (respectively a dormant particle) produces a single offspring in the dormant state (respectively in the active state). Under this interpretation, the branching is non-local as the offspring is in a different state to the parent. Essentially, classical superprocesses and their pre-limiting branching Markov processes only have an active state and the ability to transition to a dormant state is the defining feature in our setting. It is because of these transitions that we use snakes defined in \cite{BertoinLeGallLeJan1997} which take the values of killed c\`adl\`ag paths to construct our on/off Brownian snake. Following their techniques, we construct exit measures from the snakes that correspond to ooSBM at different times. See e.g. Le Gall \cite[Chapter V]{le1999spatial} for an overview on exit measures.
\par
The snakes in \cite{BertoinLeGallLeJan1997} differ from the usual Brownian snakes used to define SBM which take the values of stopped continuous paths as the killed c\`adl\`ag paths allow for discontinuous spatial components for the snakes. As in \cite{BertoinLeGallLeJan1997}, we add to the spatial motion component of the snakes independent subordinator processes and we refer to these snakes as bivariate snakes due to the two independent spatial motion components. Where our method differs from \cite{BertoinLeGallLeJan1997} is in how we use the subordinator components of the bivariate snakes to construct the on/off Brownian snakes. The construction we give in this paper is the first example of snakes being used to describe superprocesses with dormancy components. 
\par
As mentioned above, the superprocess SBM is a scaling limit of BBM, see e.g. Etheridge \cite[Chapter 1]{etheridge2000introduction}, and the superprocess ooSBM is a scaling limit of ooBBM, \cite[Definition 1.1]{blath2023off}. We now formally define BBM and ooBBM in Definition \ref{definition bbm and oobbm} and we then state how to construct the scaling limits SBM and ooSBM in Propositions \ref{proposition SBM existence} and \ref{proposition existence of ooSBM}.
\begin{definition}[Branching Brownian motion (BBM) and on/off branching Brownian motion (ooBBM)]\label{definition bbm and oobbm}
    For $b$, $k \geq 0$, \emph{branching Brownian motion} with splitting rate $b$ and death rate $k$ (\emph{$\text{BBM}(b, k)$}) is a discrete particle system that evolves according to the following dynamics:
    \begin{enumerate}
        \item each particle independently moves according to the law of a Brownian motion in $\mathbb{R}^d$ during its lifetime;
        \item a particle splits into two particles at the same spatial location at rate $b$;
        \item a particle dies at rate $k$.
    \end{enumerate}
    When $b=k$ we set $\gamma = 2b$ and we refer to the particle system as a \emph{branching Brownian motion} with parameter $\gamma$ (\emph{$\text{BBM}(\gamma)$}).
    \par
    For $\gamma \geq 0$ and for $c$, $\Tilde{c}>0$, \emph{on/off branching Brownian motion} with parameter $\gamma$, dormancy rate $c$ and wake-up rate $\Tilde{c}$ (\emph{$\text{ooBBM}(\gamma, c, \Tilde{c})$}) is a discrete particle system where particles take values in $\mathbb{R}^d \times \{0,1\}$. The first component is the spatial position of the particle and the second component denotes whether a particle is active (taking value $1$) or dormant ($0$). We furthermore have the following.
    \begin{enumerate}
        \item Each \say{active} particle independently has the spatial motion of a Brownian motion in $\mathbb{R}^d$. It also has splitting rate $\gamma/2$ and death rate $\gamma/2$ for some $\gamma \geq 0$ as before. Finally an \say{active} particle switches to a \say{dormant} particle with rate $c$ for some $c>0$.
        \item Each \say{dormant} particle does not move or split or die. Each particle stays in the same spatial position and switches to an \say{active} particle with rate $\Tilde{c}$ for some $\Tilde{c}>0$.
    \end{enumerate}
    BBM takes values in the space of finite measures on $\mathbb{R}^d$, $\mathcal{M}_F(\mathbb{R}^d)$, and ooBBM takes values in the space of pairs of finite measures on $\mathbb{R}^d$, $\mathcal{M}_F(\mathbb{R}^d)^2$, where the first component is the sum of unit Dirac masses at the positions of dormant particles and the second component similarly represents the active particles.
\end{definition}
We have included the degenerate cases $\gamma=0$ in the definition above as these are useful notions later (in Lemma \ref{lemma single particle motion with age process}). In this case when there is a single particle we can think of the process taking values in $\mathbb{R}^d$ or $\mathbb{R}^d \times \{0,1\}$. For example, $\text{BBM}(0)$ with a single initial particle is effectively a Brownian motion. Similarly, we define the $\mathbb{R}^d \times \{0, 1\}$-valued process which describes the evolution of a particle of $\text{ooBBM}(0, c, \Tilde{c})$ as on/off Brownian motion with dormancy rate $c$ and wake-up rate $\Tilde{c}$ ($\text{ooBM}(c, \Tilde{c})$).  
\par
We now define the superprocesses SBM and ooSBM. The process SBM is well studied in the literature as one of the original measure-valued branching processes. It is also known as the Dawson-Watanabe process after its introduction by Watanabe \cite{watanabe1968limit} and its use in stochastic evolution equations by Dawson \cite{dawson1975stochastic}. The process ooSBM has been introduced more recently in Blath and Jacobi \cite{blath2023off} and follows current interest in the effects of dormancy in population biology, see e.g. \cite{blath2016new, lennon2021principles}. The effect of dormancy in ooSBM impacts the properties of the superprocess. For example, their research shows in \cite[Theorem 1.10]{blath2023off} that ooSBM does not die out in finite time, unlike SBM. Our research concerns the support and the range of ooSBM, where heuristically the range of a superprocess is the set of spatial locations ever visited by the process over its lifetime whereas the support of the superprocess at a point in time is the set of spatial locations occupied by the superprocess at that time, see Definition \ref{definition support and range} below. In Theorem \ref{theorem equality ranges} we show a coupling between the range of ooSBM and the range of SBM which resolves a conjecture given in \cite[Remark 1.13]{blath2023off}.
\par
SBM and ooSBM are both measure-valued processes. We define the Prokhorov metric for finite measures next and we then state some simple properties of this metric following \cite[Section A.2.5]{daley2003introduction}.
\par
For a metric space $(E, d_E)$ we define the \emph{Prokhorov metric} $\rho$ for a pair of finite measures on $(E, d_E)$, given by $P$, $Q \in \mathcal{M}_F(E)$, as
\begin{equation*}
    \rho(P,Q) = \inf \Big(\Big\{ \varepsilon >0: P(F) \leq Q(F^\varepsilon) + \varepsilon; \hspace{0.25cm} Q(F) \leq P(F^\varepsilon)+\varepsilon \hspace{0.25cm} \text{ for all } F \in \mathcal{C} \Big\}\Big)
\end{equation*}
where $\mathcal{C}$ is the collection of closed subsets of $E$ and $F^\varepsilon \coloneqq \{x \in E: \inf(\{d_E(x,y):y \in F\}) \leq \varepsilon\}$. The Prokhorov metric metrizes weak convergence and yields a complete and separable metric space if $(E,d_E)$ is complete and separable.
\par
SBM and ooSBM are scaling limits of BBM and ooBBM. To formulate this precisely, we denote by $\mathbb{D}([0,\infty),E)$ the space of measure-valued c\`adl\`ag paths $f:[0,\infty) \rightarrow E$ where $(E,d_E)$ is a complete and separable metric space, equipped with the Skorokhod topology and with the Skorokhod metric that we denote by $d^{\text{Sko}}$, see e.g. \cite[Section 16]{billingsley1999convergence}.
\begin{proposition}[The existence of super Brownian motion (SBM), see e.g. \cite{etheridge2000introduction}]\label{proposition SBM existence}
    Let $\mu \in \mathcal{M}_F(\mathbb{R}^d)$. Then there exists a measure-valued process $(\mathcal{X}_s)_{s \geq 0}$ that takes values in $\mathcal{M}_F(\mathbb{R}^d)$ with $\mathcal{X}_0=\mu$ which is the scaling limit of BBMs in the following sense. Let $\Tilde{\mathcal{Z}}^\varepsilon=(\Tilde{\mathcal{Z}}^\varepsilon_s)_{s \geq 0}$ be $\text{BBM}(\gamma/\varepsilon)$ with $\Tilde{\mathcal{Z}}_0^\varepsilon$ distributed as a Poisson random measure with intensity $\mu/\varepsilon$. Then we have 
\begin{equation*}
    (\varepsilon \Tilde{\mathcal{Z}}^\varepsilon_s)_{s \geq 0} \rightarrow (\mathcal{X}_s)_{s \geq 0}
\end{equation*}
as $\varepsilon$ goes to $0$, in distribution on  $\mathbb{D}([0,\infty), \mathcal{M}_F(\mathbb{R}^d))$. This process $(\mathcal{X}_s)_{s \geq 0}$ is called $\text{SBM}(\gamma)$ started at $\mu$.
\end{proposition}
\begin{proposition}[The existence of on/off super Brownian motion (ooSBM), Blath and Jacobi \cite{blath2023off}]\label{proposition existence of ooSBM}
Let $(\mu_{\rm d}, \mu_{\rm a}) \in \mathcal{M}_F(\mathbb{R}^d)^2$. Then there exists a measure-valued Markov process $((\mathcal{Y}^{(\rm d)}_s, \mathcal{Y}^{(\rm a)}_s), s \geq 0)$ that takes values in $\mathcal{M}_F(\mathbb{R}^d)^2$ with $(\mathcal{Y}^{(\rm d)}_0, \mathcal{Y}^{(\rm a)}_0)=(\mu_{\rm d}, \mu_{\rm a})$ which is the scaling limit of ooBBM in the following sense. For $\varepsilon>0$, let $((\mathcal{Z}^{(\varepsilon, \rm d)}_s, \mathcal{Z}^{(\varepsilon, \rm a)}_s))_{s \geq 0}$ be $\text{ooBBM}(\gamma/\varepsilon, c, \Tilde{c})$ with $\mathcal{Z}_0^\varepsilon$ distributed as a Poisson random measure with intensity $(\mu_{\rm d}/\varepsilon, \mu_{\rm a}/\varepsilon)$. Then we have 
\begin{equation*}
    ((\varepsilon\mathcal{Z}^{(\varepsilon, \rm d)}_s, \varepsilon\mathcal{Z}^{(\varepsilon, \rm a)}_s))_{s \geq 0} \rightarrow ((\mathcal{Y}^{(\rm d)}_s, \mathcal{Y}^{(\rm a)}_s))_{s \geq 0}
\end{equation*}
as $\varepsilon$ goes to $0$, in distribution on $\mathbb{D}([0,\infty), \mathcal{M}_F(\mathbb{R}^d)^2)$. The process $((\mathcal{Y}^{(\rm d)}_s, \mathcal{Y}^{(\rm a)}_s))_{s \geq 0}$ is called $\text{ooSBM}(\gamma, c, \Tilde{c})$ started at $(\mu_{\rm d}, \mu_{\rm a})$.
\end{proposition}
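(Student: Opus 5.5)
The plan is to follow the standard route for superprocess limits: tightness of the rescaled particle systems in $\mathbb{D}([0,\infty),\mathcal{M}_F(\mathbb{R}^d)^2)$, followed by identification of every limit point through a well-posed martingale problem, or equivalently through a Laplace functional characterised by a system of evolution equations.

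\textbf{The pre-limiting Laplace functional.} First I would exploit the branching property of $\text{ooBBM}(\gamma/\varepsilon,c,\Tilde{c})$ together with the Poisson initial condition. For nonnegative bounded continuous test functions $(\phi_{\rm d},\phi_{\rm a})$, this gives
\begin{equation*}
\mathbb{E}\Big[\exp\big(-\langle \varepsilon\mathcal{Z}^{(\varepsilon,\rm d)}_t,\phi_{\rm d}\rangle-\langle \varepsilon\mathcal{Z}^{(\varepsilon,\rm a)}_t,\phi_{\rm a}\rangle\big)\Big]=\exp\big(-\langle \mu_{\rm d},u^\varepsilon_{\rm d}(t)\rangle-\langle \mu_{\rm a},u^\varepsilon_{\rm a}(t)\rangle\big).
\end{equation*}
Here $u^\varepsilon=\varepsilon^{-1}(1-v^\varepsilon)$, and $v^\varepsilon$ solves the backward equations of the two-type particle system:
\begin{itemize}
\item dormant type: pure switching at rate $\Tilde{c}$;
\item active type: Laplacian, critical binary branching at rate $\gamma/(2\varepsilon)$, and switching at rate $c$.
\end{itemize}
Expanding the branching term shows that $u^\varepsilon$ satisfies, up to $O(\varepsilon)$ errors,
\begin{align*}
\partial_t u_{\rm a}&=\tfrac12\Delta u_{\rm a}-\tfrac{\gamma}{2}u_{\rm a}^2+c(u_{\rm d}-u_{\rm a}),\\
\partial_t u_{\rm d}&=\Tilde{c}(u_{\rm a}-u_{\rm d}),
\end{align*}
with initial data $(\phi_{\rm a},\phi_{\rm d})$.

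\textbf{Passing to the limit in the equations.} Next I would show that this limiting system has a unique nonnegative mild solution and that $u^\varepsilon\to u$ locally uniformly. Since $u^\varepsilon$ is bounded by the first-moment semigroup, which has linear growth only through the switching, a Gronwall argument on the mild formulation should suffice. This identifies the one-dimensional distributions of any limit. Finite-dimensional distributions then follow by iterating the construction via the Markov property at successive times.

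\textbf{Tightness.} I would apply Jakubowski's criterion. Compact containment comes from the first-moment bound $\mathbb{E}\langle \varepsilon\mathcal{Z}^\varepsilon_t,1\rangle\le(\mu_{\rm d}+\mu_{\rm a})(\mathbb{R}^d)$, since total mass is a martingale up to switching, combined with Doob's inequality and a tail estimate on Brownian displacement to control mass escaping to infinity. For each smooth $\phi$, the semimartingale decomposition of $\langle\varepsilon\mathcal{Z}^\varepsilon,\phi\rangle$ has:
\begin{itemize}
\item a drift with bounded rate (Laplacian plus switching);
\item a martingale part whose quadratic variation is $\gamma\int\langle\varepsilon\mathcal{Z}^{(\varepsilon,\rm a)}_s,\phi^2\rangle ds$ plus $O(\varepsilon)$ switching jumps.
\end{itemize}
The Aldous–Rebolledo criterion then gives tightness of the real-valued projections, and continuity of limit points follows because jumps are of size $\varepsilon$.

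The main obstacle I expect is compact containment uniformly in $\varepsilon$ for the dormant component. Dormant mass never disperses, so the estimate must be transferred from the active component through the switching integral. I would handle this with a moment bound on $\langle\varepsilon\mathcal{Z}^{\varepsilon},1-\chi_R\rangle$, with $\chi_R$ a smooth cutoff, using the linear (first-moment) system. The uniqueness of the limit itself I would take from \cite{blath2023off}, so that the proposition reduces to combining these steps.
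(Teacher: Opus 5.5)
The paper gives no proof of this proposition. It is quoted from Blath and Jacobi \cite{blath2023off}, and the paper's snake construction actually depends on it. In the proof of Theorem~\ref{theorem ooSBM ffd convergence}, the almost-sure fixed-time limits of the snake-embedded ooBBMs are identified as ooSBM only by appeal to this proposition. Corollary~\ref{corollary continuous modifications} also needs the path-continuous ooSBM that it provides.

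Your outline is the standard self-contained route: the log-Laplace equation, Gronwall, iteration over times, then Jakubowski and Aldous--Rebolledo. It is sound. Its two halves buy different things relative to the paper. The Laplace-functional half alone would identify the finite-dimensional laws of the snake limits without citing \cite{blath2023off}. The tightness half supplies path-space convergence and continuity of limit points, which the snake approach cannot produce since it gives only a.s.\ convergence at fixed times.

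A few points to tighten.

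\emph{Branching rates.} In $\text{ooBBM}(\gamma/\varepsilon,c,\Tilde{c})$ splitting and death each occur at rate $\gamma/(2\varepsilon)$. So the branching term is $\frac{\gamma}{2\varepsilon}(1-v^\varepsilon_{\rm a})^2=\frac{\gamma\varepsilon}{2}(u^\varepsilon_{\rm a})^2$, and $u^\varepsilon$ solves your limiting system exactly. The only discrepancy is the initial datum $\varepsilon^{-1}(1-e^{-\varepsilon\phi})=\phi+O(\varepsilon)$. If branching events occurred at total rate $\gamma/(2\varepsilon)$, as your wording suggests, you would get $\gamma/4$ instead of $\gamma/2$.

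\emph{Bounds for Gronwall.} The first-moment semigroup is the transition semigroup of $\text{ooBM}(c,\Tilde{c})$. It is a sup-norm contraction with no growth, so $0\le u^\varepsilon\le\max(\|\phi_{\rm d}\|_\infty,\|\phi_{\rm a}\|_\infty)$ uniformly, and this is what makes the Gronwall step work. When iterating over times, the intermediate configurations are not Poisson, so you also need $-\varepsilon^{-1}\log(1-\varepsilon u^\varepsilon)=u^\varepsilon+O(\varepsilon)$.

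\emph{Compact containment.} A fixed-time moment bound is not enough; you need control of $\sup_{t\le T}$. Test the sum $\varepsilon(\mathcal{Z}^{(\varepsilon,\rm d)}+\mathcal{Z}^{(\varepsilon,\rm a)})$ against $1-\chi_R$. Switching does not move mass, so the switching terms cancel and the drift is $O(R^{-2})$ times the total mass. Doob's inequality then handles the martingale part. So the dormant component is not actually an obstacle.

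\emph{Uniqueness.} Nothing needs to be imported for uniqueness. Once the finite-dimensional Laplace functionals of every limit point are given by the unique solution $u$, all limit points coincide. The Markov property of the limit can be read off from the iterated log-Laplace formula.
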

The superprocess ooSBM can also be characterised through its evolution equations, see \cite[Theorem 1.3]{blath2023off}. We make a remark about the distributional results in Proposition \ref{proposition existence of ooSBM}.
\begin{remark}\label{remark different measure representations}
    In \cite{blath2023off}, ooSBM and ooBBM are actually represented as processes in $\mathcal{M}_F(\mathbb{R}^d \times \{0,1\})$ but it will be more convenient for us to represent them in $\mathcal{M}_F(\mathbb{R}^d)^2$ where the two components are the projections to $\mathbb{R}^d$ of the measure to the dormant part $\mathbb{R}^d \times \{0\}$ and active part $\mathbb{R}^d \times \{1\}$. As the map from $\mathcal{M}_F(\mathbb{R}^d \times \{0,1\})$ to $\mathcal{M}_F(\mathbb{R}^d)^2$ is a homeomorphism, all results can be easily transferred. We use this alternative formulation in Section \ref{section moments}, see Table \ref{table different spaces}.
\end{remark}
We define the support of a measure noted in Cohn \cite[Section 7.4]{cohn2013measure} and the range of a measure-valued process from Le Gall \cite[Chapter IV]{le1999spatial}.
\begin{definition}[The support of a measure and the range of a measure-valued process]\label{definition support and range}
    Let $\mu \in \mathcal{M}_F(\mathbb{R}^d)$. Then $\mathbb{R}^d$ has a largest open subset $U_\mu$ such that $\mu(U_\mu)=0$ -- this is the union of all open subsets $U$ such that $\mu(U)=0$. The \emph{support} of $\mu$ is defined to be $\text{supp}(\mu) \coloneqq \mathbb{R}^d \backslash U_\mu$.
    \par
    For a measure-valued c\`adl\`ag process $\mathcal{W}=(\mathcal{W}_s)_{s \geq 0}$ we define the \emph{range} to be 
    \begin{equation*}
        \mathcal{R}(\mathcal{W}) = \bigcup_{\delta > 0} \overline{\bigcup_{y \geq \delta} \text{supp}(\mathcal{W}_y)}.
    \end{equation*}
\end{definition}
We note that $\text{supp}(\mu)$ is the smallest closed set whose complement has measure zero under $\mu$. Furthermore we note that a point $x \in \mathbb{R}^d$ is in $\text{supp}(\mu)$ if and only if every open neighbourhood of $x$ has positive measure under $\mu$. We use these definitions when we state our main results for ooSBM in Theorem \ref{theorem equality ranges}. Some of the literature takes the range of a superprocess $\mathcal{X}$ to be closure of the range $\mathcal{R}(\mathcal{X})$ defined in Definition \ref{definition support and range}. We discuss the difference in our results for SBM and ooSBM with this alternative definition in Remark \ref{remark alternative range operator}. 
\par
These sets given by the range and the support of a superprocess have an interesting structure. In Definition \ref{definition hausdorff dimension}, we define the Hausdorff dimension of the range and the support, following \cite[Chapter 6]{etheridge2000introduction}.
\begin{definition}[The Hausdorff dimension]\label{definition hausdorff dimension}
    Let $A \subseteq \mathbb{R}^d$ be a measurable set. For $\delta>0$ we call a $\delta$-cover any collection of open sets $U_j, j \in J$, such that 
    \begin{equation*}
        A \subseteq \bigcup_{j \in J}U_j; \hspace{0.5cm} \text{sup}_{j \in J}\text{Leb}(U_j) < \delta.
    \end{equation*}
    We define $\mathcal{H}^s_\delta(A) = \inf \sum_{j \in J}|U_j|^s$ where the infimum is taken over all $\delta$-covers, and $\mathcal{H}^s(A) = \lim_{\delta \rightarrow 0} \mathcal{H}^s_\delta(A)$. The \emph{Hausdorff dimension} of $A$, which we denote $\text{dim}(A)$, is defined to be the unique number such that
    \begin{equation*}
        \mathcal{H}^s(A) = \infty \text{   for all   } 0 \leq s < \text{dim}(A); \hspace{0.25cm} \text{   and   } \hspace{0.25cm} \mathcal{H}^s(A) = 0 \text{   for all   } \text{dim}(A) < s < \infty.
    \end{equation*}
\end{definition}
The Hausdorff dimension gives a notion of roughness for certain non-standard Euclidean subspaces (it coincides with the Euclidean dimension for more standard subspaces with smooth boundaries). We state well-known results about the Hausdorff dimension of the range and support of SBM, see e.g. \cite[Theorem 6.15]{etheridge2000introduction} and see also \cite[Theorem IV.7]{le1999spatial}. 
\begin{proposition}[The dimension of the support and the range of SBM]\label{proposition dimension range and support sbm}
Let $\mu \in \mathcal{M}_F(\mathbb{R}^d)$ be a non-zero finite measure, let $\gamma > 0$ and let $(\mathcal{X}_s, s \geq 0)$ be an $\text{SBM}(\gamma)$ started from $\mu$. Then we have that for $y>0$, that
\begin{equation*}
    \text{dim}(\text{supp}(\mathcal{X}_y)) = 2 \wedge d \hspace{0.5cm} \text{ 
  on   } \{\mathcal{X}_y \neq 0\} \text{    a.s.}
\end{equation*}
Furthermore, for $\delta>0$ we have that
\begin{equation*}
    \text{dim}\Big(\overline{\bigcup_{y \geq \delta}\text{supp}(\mathcal{X}_y)}\Big) = 4 \wedge d \hspace{0.5cm} \text{ 
  on   } \{\mathcal{X}_{\delta} \neq 0\} \text{    a.s.}
\end{equation*}
Finally we have that
\begin{equation*}
    \text{dim}(\mathcal{R}(\mathcal{X})) = 4 \wedge d \hspace{0.5cm} \text{a.s.}
\end{equation*}
\end{proposition}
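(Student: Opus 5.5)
This is a classical result, so the plan is to reduce all three claims to Le Gall's Brownian snake representation of $\text{SBM}(\gamma)$ and to standard Hausdorff dimension estimates for the snake's tip and range. By scaling we may take $\gamma$ fixed. Write $\mathcal{X}_y=\sum_i \mathcal{X}^i_y$ as a Poisson superposition of excursion clusters: a Poisson point measure with intensity $\int\mu(dx)\,\mathbb{N}_x$, where $\mathbb{N}_x$ is the Itô excursion measure of the Brownian snake. Under $\mathbb{N}_x$, the measure $\mathcal{X}_y$ is $\int_0^\sigma d_sL^y_s\,\delta_{\hat W_s}$, where $L^y$ is the local time of the lifetime process $\zeta$ at level $y$ and $\hat W_s$ is the tip. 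A Hausdorff dimension of a finite union is the maximum of the dimensions. Only finitely many excursions reach height $\delta$, since $\mathbb{N}_x(\sup\zeta\ge\delta)=1/(2\delta)<\infty$. Hence it suffices to prove every statement under $\mathbb{N}_x$, on the event that the cluster is nonzero at the relevant time.

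\textbf{Upper bounds.} The support of $\mathcal{X}_y$ is contained in $\{\hat W_s: \zeta_s=y\}$. The range after time $\delta$ is contained in $\{\hat W_s: s\in[0,\sigma]\}$, which is already closed as a continuous image of a compact interval. By Kolmogorov applied to the snake, $s\mapsto \hat W_s$ is Hölder of every order $<1/4$ in $s$: conditionally on $\zeta$, $|\hat W_s-\hat W_{s'}|$ is Gaussian with variance $\zeta_s+\zeta_{s'}-2\min_{[s,s']}\zeta$, and this is $O(|s-s'|^{1/2-})$. This gives $\dim\le 4$ for the range. For the support, the set $\{s:\zeta_s=y\}$ is the level set of a Brownian excursion, which has dimension $1/2$. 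More precisely, it can be covered by $O(\varepsilon^{-1/2})$ intervals of length $\varepsilon$, and on each of them $\hat W$ oscillates by $O(\varepsilon^{1/4-})$. Hence $\dim\le 2$. Intersecting with the trivial bound $d$ gives the upper bounds $2\wedge d$ and $4\wedge d$.

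\textbf{Lower bounds.} Use the energy (Frostman) method with the natural measures $\mathcal{X}_y$ and the occupation measure $\mathcal{Z}=\int_0^\sigma \delta_{\hat W_s}\,ds$. I would show that $\mathbb{N}_x\big[\iint |a-b|^{-\alpha}\mathcal{X}_y(da)\mathcal{X}_y(db)\big]<\infty$ for $\alpha<2\wedge d$. Here the second moment formula for SBM, with genealogy branching at a time $r\le y$, gives an integrand $\int_0^y \mathbb{E}|B_{2(y-r)}|^{-\alpha}dr$, which is finite iff $\alpha<2$. For $\mathcal{Z}$ the analogous computation gives $\int\int_0^\infty\int_0^\infty \mathbb{E}|B_{u+v}|^{-\alpha}du\,dv$ locally, finite iff $\alpha<4$. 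Since $\mathcal{X}_y\neq0$ forces $\mathcal{X}_y$ to have positive mass on the support, Frostman gives $\dim\ge 2\wedge d$ a.s. on $\{\mathcal{X}_y\neq0\}$. For the range after $\delta$, apply the same argument to $\int_\delta^\infty \mathcal{X}_t\,dt$, which is nonzero on $\{\mathcal{X}_\delta\ne0\}$ by continuity of the total mass. Its support lies in the closure of $\bigcup_{y\ge\delta}\operatorname{supp}\mathcal{X}_y$.

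\textbf{Main obstacle.} The delicate step is upgrading "positive probability" to "almost surely on the event". For the support this is done by noting that finite energy holds $\mathbb{N}_x$-a.e., so Frostman applies pathwise; the convergence of the expectation ensures the energy is a.s. finite. For the full range $\mathcal{R}(\mathcal{X})$, since $\mathcal{R}$ is an increasing union over $\delta$ with $\mu\neq 0$, some $\mathcal{X}_\delta\neq 0$ for small $\delta$ a.s. This gives $\dim\ge 4\wedge d$. The upper bound holds because a countable union of sets of dimension $\le 4\wedge d$ still has dimension $\le 4\wedge d$.
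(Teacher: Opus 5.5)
Your proposal is correct and follows essentially the route of the proof the paper cites (Le Gall, Theorem IV.7). That route is: the Poisson cluster decomposition under $\mathbb{N}_x$, H\"older continuity of the tip $\hat W$ for the upper bounds, and second-moment energy estimates with Frostman's lemma for the lower bounds. It also obtains the statement for $\mathcal{R}(\mathcal{X})$ from the $\delta$-statements by a countable union over $\delta=1/k$, exactly as the paper remarks after the proposition.

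One step needs a small repair. Under $\mathbb{N}_x$ the expected $\alpha$-energy of $\int_\delta^\infty\mathcal{X}_t\,dt$ is infinite; even its expected total mass is infinite, since $\mathbb{N}_x(\langle\mathcal{X}_t,1\rangle)$ is constant in $t$. A spatial cut-off does not fix this when $d\le 2$. Truncate in time instead: use $\int_\delta^T\mathcal{X}_t\,dt$ for a fixed $T>\delta$. This measure is still nonzero on $\{\mathcal{X}_\delta\neq0\}$. Its expected $\alpha$-energy is at most a constant times $T\iint_{[0,T]^2}(u+v)^{-\alpha/2}\,du\,dv$, which is finite for $\alpha<4\wedge d$.
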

The dimension of the range of $\text{SBM}(\gamma)$ in Proposition \ref{proposition dimension range and support sbm} follows almost immediately from dimension of $\overline{\bigcup_{y \geq \delta}\text{supp}(\mathcal{X}_y)}$. We have for all $k \in \mathbb{N}$ that the dimension of $\overline{\bigcup_{y \geq 1/k}\text{supp}(\mathcal{X}_y)}$ is $4 \wedge d$ which means that the dimension of the countable union $\bigcup_{k \in \mathbb{N}}\overline{\bigcup_{y \geq 1/k}\text{supp}(\mathcal{X}_y)}$ is also $4 \wedge d$ and this countable union is equal to $\mathcal{R}(\mathcal{X})$.
\par
The main aim of this paper is to better understand the nature of ooSBM and how it compares to SBM by calculating the Hausdorff dimension of the range and the support of ooSBM. Our first result addresses a conjecture given in \cite[Remark 1.13]{blath2023off} about the ranges of ooSBM and SBM. The conjecture states that the ranges of the two superprocesses are equal when suitably coupled. In Theorem \ref{theorem equality ranges} we show this is not the case for the definition of the range operator that we use in Definition \ref{definition support and range}. However, we also note in Remark \ref{remark alternative range operator} that the conjecture is true for the equality of the closure of the ranges of SBM and ooSBM.
\begin{theorem}[The range of ooSBM]\label{theorem equality ranges}
    Let $\mu_{\rm d}$, $\mu_{\rm a} \in \mathcal{M}_F(\mathbb{R}^d)$ be such that $\mu_{\rm d} + \mu_{\rm a}$ is a non-zero measure and let $\gamma$, $c$, $\Tilde{c}>0$. Let $\mathcal{Y}=(\mathcal{Y}^{(\rm d)}, \mathcal{Y}^{(\rm a)})$ be an $\text{ooSBM}(\gamma, c, \Tilde{c})$ with initial measure $(\mu_{\rm d}, \mu_{\rm a})$ and let $\mathcal{X}$ be an $\text{SBM}(\gamma)$ with initial measure $\mu_{\rm d} + \mu_{\rm a}$. Then there exists a coupling for $\mathcal{Y}$ and $\mathcal{X}$ such that 
    \begin{equation*}
        \mathcal{R}(\mathcal{Y}^{(\rm d)}) \cup \mathcal{R}(\mathcal{Y}^{(\rm a)}) = \text{supp}(\mu_{\rm d}) \cup \text{supp}(\mu_{\rm a}) \cup \mathcal{R}(\mathcal{X}) \hspace{0.25cm} \text{a.s.} 
    \end{equation*}
    In particular, we have that $\mathcal{R}(\mathcal{Y}^{(\rm d)}) \cup \mathcal{R}(\mathcal{Y}^{(\rm a)})$ is a closed set a.s. and is equal in distribution to 
    \newline
    $\text{supp}(\mu_{\rm d}) \cup \text{supp}(\mu_{\rm a}) \cup \mathcal{R}(\mathcal{X})$. Therefore
    \begin{equation*}
        \text{dim}(\mathcal{R}(\mathcal{Y}^{(\rm d)}) \cup \mathcal{R}(\mathcal{Y}^{(\rm a)})) = \text{dim}(\text{supp}(\mu_{\rm d})) \vee \text{dim}(\text{supp}(\mu_{\rm a})) \vee (4 \wedge d) \text{   a.s.}
    \end{equation*}
\end{theorem}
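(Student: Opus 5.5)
The plan is to build the coupling from the on/off Brownian snake, in the spirit of the particle picture. Each individual in ooSBM follows an on/off Brownian motion $\text{ooBM}(c,\Tilde c)$. This path is a Brownian motion run on the clock of accumulated active time, and it is frozen during dormant periods. Branching happens only in active time. So if we reparametrise every genealogical line by its own active time, we should recover exactly the genealogy and spatial motion of $\text{SBM}(\gamma)$ started from $\mu_{\rm d}+\mu_{\rm a}$.

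\textbf{Step 1: the construction.} Take a Brownian snake (Le Gall) with lifetime process given by reflected Brownian motion. Its spatial paths give $\mathcal{X}$ through exit measures or local times at levels $s$. Attach to each path independent subordinator or age components, as in the bivariate snakes of \cite{BertoinLeGallLeJan1997}. These encode the alternating exponential dormant and active periods; the initial state of each atom is dormant or active according to $\mu_{\rm d}$ or $\mu_{\rm a}$. The on/off snake path at real time $t$ is then the Brownian path evaluated at active time $A(t)$. Here $A$ is the inverse of the real-time clock, and time spent dormant only stretches the path without moving it.

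\textbf{Step 2: the ranges agree.} In this coupling $\mathcal{Y}^{(\rm d)}_t$ and $\mathcal{Y}^{(\rm a)}_t$ arise as exit measures at real time $t$. The set of spatial positions visited by the snake paths is therefore literally the same set for $\mathcal{X}$ and for $\mathcal{Y}$, because each on/off path traces the same trajectory as its underlying Brownian path, only with a different clock.

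\textbf{Step 3: matching the range operator.} Supports of exit measures should coincide with the closures of the endpoints of snake paths at the relevant levels. This is the standard support characterisation of \cite[Chapter IV]{le1999spatial}, transferred to the bivariate setting. With that in hand I would check two inclusions.
\begin{itemize}
\item $\supseteq$: every point of $\mathcal{R}(\mathcal{X})$ is a limit of snake endpoints at positive active times, and hence at positive real times. The points of $\text{supp}(\mu_{\rm d})\cup\text{supp}(\mu_{\rm a})$ are also included. Mass started dormant stays put for an exponential time, so $\text{supp}(\mu_{\rm d})\subseteq\text{supp}(\mathcal{Y}^{(\rm d)}_y)$ for small $y>0$. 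Similarly, $\mu_{\rm a}$ mass falls asleep immediately, at rate $c$, near its starting points, so $\text{supp}(\mu_{\rm a})$ lies in the closure of $\bigcup_{y\ge\delta}\text{supp}(\mathcal{Y}^{(\rm d)}_y)$ for small $\delta$. Both facts should follow from a first moment computation over small balls.
\item $\subseteq$: every point charged by $\mathcal{Y}_y$ with $y>0$ is either an endpoint of a Brownian snake path at some active time $s\le y$ with $s>0$, hence in $\mathcal{R}(\mathcal{X})$, or, when $s=0$, a point of the initial support.
\end{itemize}
I also need that $\mathcal{R}(\mathcal{X})\cup\text{supp}(\mu)$ is closed. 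This should hold because $\mathcal{R}(\mathcal{X})$ accumulates only on $\text{supp}(\mu)$, by the modulus of continuity of snake paths near time $0$.

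\textbf{Step 4: the consequences.} Closedness and the distributional equality then follow immediately from the coupling. The dimension formula follows from Proposition~\ref{proposition dimension range and support sbm} together with countable stability of Hausdorff dimension.

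I expect the main obstacle to be Step 3: rigorously identifying the supports of the exit measures at real time $y$ with snake endpoints. The difficulty lies in the subtle boundary behaviour at active time $0$, since the coupling's $\mathcal{X}$ excludes the initial support while $\mathcal{Y}$ includes it. I would first establish an exit-measure support lemma mirroring Le Gall's proof, using the excursion measure and the Poisson decomposition of the snake.
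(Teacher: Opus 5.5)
Your overall plan is the paper's:
\begin{itemize}
\item the bivariate snake, with height as active time and a subordinator giving real time;
\item $\mathcal{X}$ obtained from local times of the Brownian heads;
\item $\mathcal{Y}_s$ obtained as the exit measure from $\{S<s\}$, split by whether the subordinator hits $s$ or jumps over it;
\item the inclusion $\mathcal{R}(\mathcal{X})\subseteq\mathcal{R}(\mathcal{Y}^{(\rm d)})\cup\mathcal{R}(\mathcal{Y}^{(\rm a)})$ from $H_t(\delta)\le\delta$;
\item the reverse inclusion via $\overline{\mathcal{R}(\mathcal{X})}\subseteq\mathcal{R}(\mathcal{X})\cup\text{supp}(\mu_{\rm d})\cup\text{supp}(\mu_{\rm a})$.
\end{itemize}

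\textbf{The gap: the initial supports.} A first-moment computation such as $\mathbb{E}[\mathcal{Y}^{(\rm d)}_y(B(x,\varepsilon))]>0$ only shows that this mass is positive with positive probability. A support inclusion needs it almost surely. For SBM, $\mathbb{E}[\mathcal{X}_y(B)]>0$ for every ball $B$, yet $\text{supp}(\mathcal{X}_y)$ is compact. This step is exactly where the two ranges differ: for $d\ge4$ points are polar, so SBM started from $\delta_x$ a.s.\ has $x\notin\mathcal{R}(\mathcal{X})$.

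For $\mu_{\rm d}$ the fix is immediate from the construction. It gives the pathwise bound $\mathcal{Y}^{(\rm d)}_s\ge e^{-\Tilde{c}s}\mu_{\rm d}$, by a law of large numbers over the clusters with $s_i>s$ (Corollary~\ref{corollary dormant support}).

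For $\mu_{\rm a}$ the paper (Proposition~\ref{proposition inclusion at time 0}) uses the infinitely many small, independent clusters rooted near $x$:
\begin{itemize}
\item excursions with height in $[1/k,1/(k-1))$ have $n$-mass $1/2$;
\item at that scale, a cluster rooted in $B(x,\varepsilon/2)$ has probability of order $1/k$ to fall dormant before active time $1/(2k)$, to stay dormant longer than $s$, and to keep its Brownian path within $\varepsilon$ of $x$;
\item since $\sum_k 1/k$ diverges, the second Borel--Cantelli lemma gives such a cluster almost surely, and its residual-process downcrossing at level $s$ gives a point of $\text{supp}(\mathcal{Y}_s)$ in $B(x,\varepsilon)$;
\item a countable dense subset of $\text{supp}(\mu_{\rm a})$ finishes the argument.
\end{itemize}
Alternatively, by the martingale problem of \cite{blath2023off} the dormant component carries no noise:
\[
\mathcal{Y}^{(\rm d)}_t=e^{-\Tilde{c}t}\mu_{\rm d}+c\int_0^t e^{-\Tilde{c}(t-u)}\mathcal{Y}^{(\rm a)}_u\,du .
\]
Together with weak continuity of $\mathcal{Y}^{(\rm a)}$ at $0$, this gives $\text{supp}(\mu_{\rm a})\subseteq\text{supp}(\mathcal{Y}^{(\rm d)}_t)$ for all $t>0$.

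\textbf{Two ingredients asserted rather than planned.} First, Step 1 takes for granted that the real-time exit measures form an $\text{ooSBM}(\gamma,c,\Tilde{c})$. The exit-measure construction of \cite{BertoinLeGallLeJan1997} does not apply directly here, since one of its assumptions fails, and the active/dormant split is a two-type, non-local structure. The paper identifies the law in three steps. It embeds $\text{ooBBM}(4/\varepsilon,c,\Tilde{c})$ in the snake via $h$-erased trees. It rewrites the particles at time $s$ as $h$-downcrossings of $t\mapsto\text{Res}_t(s)$ (Lemma~\ref{lemma alternative characterisation ooBBM from snake}). It then lets $\varepsilon\to0$, so the rescaled counts converge to exit local times, and the scaling limit of \cite{blath2023off} fixes the finite-dimensional distributions (Theorem~\ref{theorem ooSBM ffd convergence}).

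Second, this only defines $\mathcal{Y}_s$ up to a null set for each fixed $s$, while $\mathcal{R}$ involves all $s\ge\delta$ at once. So your claim in Step 2 that the visited sets coincide needs a continuous modification, plus the reduction of $\overline{\bigcup_{s\ge\delta}\text{supp}(\mathcal{Y}_s)}$ to rational $s$ (Proposition~\ref{proposition closure of range of continuous process}). Only then can the support identification you flagged be applied countably often. That identification is Lemmas~\ref{lemma supports sbm and oosbm} and~\ref{lemma support sbm interval}: local-time support equals the zero set, and heads at strict local minima of $f^i$ are limits of downcrossing endpoints at rational levels.
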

The dimension of the range follows from the range coupling and Proposition \ref{proposition dimension range and support sbm}. We note that for the pre-limiting processes BBM and ooBBM there is an immediate coupling for which the ranges are equal a.s.; however, as seen in Theorem \ref{theorem equality ranges}, the dimension of the range of ooSBM is not necessarily the same as the dimension of the range of SBM. This is because the range of SBM does not necessarily contain the support of the initial state unlike the range of ooSBM. We give a remark about an alternative definition of the range operator for which the range of SBM is equal to the range of ooSBM under a suitable coupling.
\begin{remark}[Alternative range operator]\label{remark alternative range operator}
Another potential definition of the range of a superprocess $\mathcal{W}$, which is again stated in Le Gall \cite{le1999spatial}, is $\overline{\mathcal{R}}(\mathcal{W}) := \overline{\bigcup_{s>0} \text{supp}(\mathcal{W}_s)}$. This is the closure of the range given in Definition \ref{definition support and range}, that is to say $\overline{\mathcal{R}}(\mathcal{W}) = \overline{\mathcal{R}(\mathcal{W})}$. In this case, the support of the initial measure $\text{supp}(\mathcal{W}_0)$ is contained in $\overline{\mathcal{R}}(\mathcal{W})$ (which is not necessarily the case for $\mathcal{R}(\mathcal{W})$). Therefore, under this alternative definition, we have the equality of the ranges of ooSBM and SBM, i.e. $\overline{\mathcal{R}}(\mathcal{Y}^{(\rm d)}) \cup \overline{\mathcal{R}}(\mathcal{Y}^{(\rm a)}) = \overline{\mathcal{R}}(\mathcal{X})$ a.s., when coupled as in Theorem \ref{theorem equality ranges}, and as conjectured in \cite[Remark 1.13]{blath2023off}.
\end{remark}
We now state our results concerning the support of ooSBM.
\begin{theorem}[The support of ooSBM]\label{theorem increasing support}
    Let $\mu_{\rm d}$, $\mu_{\rm a} \in \mathcal{M}_F(\mathbb{R}^d)$ be such that $\mu_{\rm d} + \mu_{\rm a}$ is a non-zero measure and let $\gamma$, $c$, $\Tilde{c}>0$. Let $\mathcal{Y}=(\mathcal{Y}^{(\rm d)}, \mathcal{Y}^{(\rm a)})$ be an $\text{ooSBM}(\gamma, c, \Tilde{c})$ with initial measure $(\mu_{\rm d}, \mu_{\rm a})$. Then we have that
    \begin{equation*}
        (\text{supp}(\mathcal{Y}_s^{(\rm d)}) \cup \text{supp}(\mathcal{Y}_s^{(\rm a)}))_{s \geq 0}
    \end{equation*}
    is an increasing process a.s. That is to say that a.s. the support of ooSBM is monotonically increasing at all times.
    \par
    Furthermore, we also have for any $s>0$ that
    \begin{equation*}
        \text{dim}\Big(\text{supp}(\mathcal{Y}_s^{(\rm d)}) \cup \text{supp}(\mathcal{Y}_s^{(\rm a)})\Big) = \text{dim}(\text{supp}(\mu_{\rm d})) \vee \text{dim}(\text{supp}(\mu_{\rm a})) \vee (4 \wedge d) \text{   a.s.}
    \end{equation*}
\end{theorem}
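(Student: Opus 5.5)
Both claims will be derived from a single identity: under the coupling of Theorem \ref{theorem equality ranges}, for every $s>0$,
\begin{equation*}
\text{supp}(\mathcal{Y}_s^{(\rm d)}) \cup \text{supp}(\mathcal{Y}_s^{(\rm a)}) = \text{supp}(\mu_{\rm d}) \cup \text{supp}(\mu_{\rm a}) \cup \overline{\textstyle\bigcup_{0<y\le s}\text{supp}(\mathcal{X}_y)}\qquad\text{a.s.}
\end{equation*}
The right-hand side is increasing in $s$. Its dimension follows from Proposition \ref{proposition dimension range and support sbm} together with the scaling and Markov properties of SBM. We only need this on a full-measure event simultaneously for all $s$, so we will argue pathwise rather than for fixed $s$ wherever possible.

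\textbf{Monotonicity.} The mechanism is dormancy. A dormant particle does not move, does not die, and leaves at exponential rate $\Tilde{c}$. So mass that is dormant at a location $x$ at time $u$ keeps a positive fraction $e^{-\Tilde{c}(s-u)}$ of itself at $x$ at every later time $s$. In the limit this gives the lower bound
\begin{equation*}
\mathcal{Y}^{(\rm d)}_s \ge e^{-\Tilde{c}(s-u)}\mathcal{Y}^{(\rm d)}_u \quad\text{for } s\ge u,
\end{equation*}
which I would read off the on/off snake construction: the dormant exit measure at time $s$ contains, for each path that is dormant at time $u$, the contribution of the set of paths whose subordinator clock has not yet advanced. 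It also follows from the evolution equations \cite[Theorem 1.3]{blath2023off}, since the dormant component obeys $d\mathcal{Y}^{(\rm d)} = (c\mathcal{Y}^{(\rm a)} - \Tilde{c}\mathcal{Y}^{(\rm d)})\,ds$ with no noise. Writing this with Duhamel's formula, $\mathcal{Y}^{(\rm d)}_s = e^{-\Tilde{c}(s-u)}\mathcal{Y}^{(\rm d)}_u + c\int_u^s e^{-\Tilde{c}(s-r)}\mathcal{Y}^{(\rm a)}_r\,dr$, pathwise. This yields two facts:
\begin{enumerate}
\item $\text{supp}(\mathcal{Y}^{(\rm d)}_u)\subseteq \text{supp}(\mathcal{Y}^{(\rm d)}_s)$;
\item $\text{supp}(\mathcal{Y}^{(\rm d)}_s) \supseteq \overline{\bigcup_{r\in(u,s)}\text{supp}(\mathcal{Y}^{(\rm a)}_r)}$, using right-continuity of the active component in $r$.
\end{enumerate}
It remains to show $\text{supp}(\mathcal{Y}^{(\rm a)}_u)\subseteq \text{supp}(\mathcal{Y}_s^{(\rm d)})$ for $s>u$. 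Any open set charged by $\mathcal{Y}^{(\rm a)}_u$ is charged by $\mathcal{Y}^{(\rm a)}_r$ for $r$ slightly larger than $u$, by weak right-continuity. Item 2 then puts that open set inside the dormant support. Combining the two facts gives monotonicity of the total support for all $s$ simultaneously, almost surely.

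\textbf{Dimension.} The same Duhamel identity shows
\begin{equation*}
\text{supp}(\mathcal{Y}_s^{(\rm d)}) \cup \text{supp}(\mathcal{Y}_s^{(\rm a)}) = \text{supp}(\mu_{\rm d})\cup \overline{\textstyle\bigcup_{r\in[0,s]} \text{supp}(\mathcal{Y}^{(\rm a)}_r)}
\end{equation*}
up to the endpoint term at $r=s$, which is handled by right-continuity. So the claim reduces to identifying the "active range up to time $s$." For this I would reuse the snake coupling behind Theorem \ref{theorem equality ranges}. Active paths of the on/off snake are Brownian paths time-changed by the subordinator, and the set of spatial points they visit up to real time $s$ coincides with the points visited by the Brownian snake up to the random positive Brownian time $\sigma_s$ consumed by active time.

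This gives the two bounds. For the upper bound, this set is contained in $\text{supp}(\mu_{\rm a})\cup\text{supp}(\mu_{\rm d})\cup\mathcal{R}(\mathcal{X})$, and Theorem \ref{theorem equality ranges} gives dimension at most the claimed value. For the lower bound, the set contains $\text{supp}(\mu_{\rm d})\cup\text{supp}(\mu_{\rm a})$: the dormant initial mass persists by the bound above, and active initial mass is captured immediately by item 2. It also contains $\overline{\bigcup_{\delta\le y\le \sigma_s}\text{supp}(\mathcal{X}_y)}$ for small $\delta$.

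\textbf{Main obstacle.} The hard step is showing that the "range over a finite time window" of SBM, $\overline{\bigcup_{\delta\le y\le t}\text{supp}(\mathcal{X}_y)}$, has dimension $4\wedge d$ almost surely for every $t>\delta$, and not only on the event of survival. Proposition \ref{proposition dimension range and support sbm} concerns $y\ge\delta$ with no upper time limit. I would handle this with a zero–one/scaling argument. By the Markov property at time $\delta$, and by splitting the initial measure into many independent small pieces, each piece produces a finite-horizon range of full dimension with positive probability, uniformly. With infinitely many independent trials, full dimension is attained almost surely. Since a nonzero initial measure can be split into arbitrarily many independent clusters, this yields the lower bound $4\wedge d$ almost surely.
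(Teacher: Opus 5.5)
Your monotonicity argument is correct, and it takes a genuinely different route from the paper's. Test functions supported on the dormant state have zero quadratic variation in the martingale problem of \cite{blath2023off}. Hence the Duhamel formula $\mathcal{Y}^{(\rm d)}_s=e^{-\Tilde{c}(s-u)}\mathcal{Y}^{(\rm d)}_u+c\int_u^s e^{-\Tilde{c}(s-r)}\mathcal{Y}^{(\rm a)}_r\,dr$ holds for all $u\le s$ on a single full-measure event. With weak right-continuity it gives $\text{supp}(\mathcal{Y}^{(\rm d)}_u)\cup\text{supp}(\mathcal{Y}^{(\rm a)}_u)\subseteq\text{supp}(\mathcal{Y}^{(\rm d)}_s)$ for all $u<s$ simultaneously. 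The paper instead works in three steps:
\begin{itemize}
\item it proves $\text{supp}(\mu_{\rm a})\subseteq\text{supp}(\mathcal{Y}_s)$ by a Borel--Cantelli argument over snake excursions (Proposition \ref{proposition inclusion at time 0});
\item it extends this to rational pairs of times by the Markov property;
\item it reaches all real times via the SBM support modulus (Lemma \ref{lemma support of SBM thickening}), transferred through range domination.
\end{itemize}
Your route is much shorter. It also gives monotonicity of the dormant support alone at all times, which the paper only obtains for fixed pairs of times (Corollary \ref{corollary dormant support}). Your reduction of the support at time $s$ to $\text{supp}(\mu_{\rm d})\cup\overline{\bigcup_{r\in[0,s]}\text{supp}(\mathcal{Y}^{(\rm a)}_r)}$ is fine, and so is the upper bound via Theorem \ref{theorem equality ranges}.

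The gap is in the lower bound $4\wedge d$. Your opening identity and the inclusion $\overline{\bigcup_{\delta\le y\le\sigma_s}\text{supp}(\mathcal{X}_y)}\subseteq\text{supp}(\mathcal{Y}_s)$ both fail in the snake coupling, because there is no single Brownian time $\sigma_s$. The Brownian time consumed by real time $s$ is the lineage-dependent age $H^i_r(s)$:
\begin{itemize}
\item it equals $0$ on every excursion with $s_i>s$;
\item on a lineage that has made a dormancy jump, Brownian height $y$ is reached only at real time $v^i_r(y)>y$.
\end{itemize}
So $\mathcal{X}$ charges points on such excursions and lineages at times $y\le s$ that the ooSBM has not visited by time $s$. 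In high dimension the remaining independent clusters do not cover these points, so the inclusion fails with positive probability.

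What the ooSBM actually dominates is the damped SBM: the initially active lineages before their first subordinator jump (Lemma \ref{lemma oosbm range domination}). The finite-window lower bound therefore has to be proved for $\text{DSBM}$ (Lemma \ref{lemma dimensions damped sbm}). When $\mu_{\rm a}=0$ you also need a Markov restart at a positive time carrying nonzero active mass, as in the proof of Proposition \ref{proposition dimension oosbm}.

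Your splitting step also does not work as stated. At fixed $\delta$, a piece of mass $m$ survives to time $\delta$ with probability of order $m/\delta$, so you do not get uniformly positive probabilities. The infinitely many independent trials come instead from letting $\delta\downarrow0$. The number of clusters reaching height $\delta$ but not $t$ is Poisson with mean of order $1/\delta-1/t\to\infty$. Each such cluster has, on its survival event, a full-dimensional range above $\delta$ lying inside the window $[\delta,t]$.
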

This provides a response to the conjecture in \cite[Remark 1.13]{blath2023off} that the union of the support of the active and dormant components of ooSBM should grow monotonically in time. The heuristic reasoning given behind this conjecture is that whatever spatial location the active component occupies a dormant sub-mass is deposited there and the size of this mass decays to $0$ but never hits $0$, and therefore that spatial location remains in the support of the dormant component for all subsequent times. We use the increasing support property of ooSBM to prove the dimension of the support of ooSBM. Theorem \ref{theorem increasing support} also provides an alternative proof that ooSBM does not die out a.s., which was given in \cite[Theorem 1.10]{blath2023off}, as the process clearly does not die out if the support is monotonically increasing. 
\par
We believe the results of Theorems \ref{theorem equality ranges} and \ref{theorem increasing support} should also apply to the dormant component $\mathcal{Y}^{(\rm d)}$ but not to the active component $\mathcal{Y}^{(\rm a)}$. The reason for this is that it was shown in \cite[Theorem 1.11]{blath2023off} that $(\mathcal{Y}^{(\rm a)}_s(\mathbb{R}^d))_{s \geq 0}$ hits $0$ with positive probability and at these times the support of $\mathcal{Y}^{(\rm a)}_s$ is clearly the empty set. On the other hand, we do show the monotonic increasing support holds for the dormant component for any pair of fixed times a.s. in Corollary \ref{corollary dormant support}.
\par
We use properties of the on/off Brownian snake to make calculations about  the expectations of the total mass of the process. For a measure $\nu \in \mathcal{M}_F(\mathbb{R}^d)$, let $\norm{\nu}$ denote the total mass of $\nu$, i.e. $\norm{\nu} \coloneqq \nu(\mathbb{R}^d)$. 
\begin{theorem}[Expectation of total mass]\label{theorem expectation total masses}
    Let $\mu_{\rm d}$, $\mu_{\rm a} \in \mathcal{M}_F(\mathbb{R}^d)$ be such that $\mu_{\rm d} + \mu_{\rm a}$ is a non-zero measure and let $\gamma$, $c$, $\Tilde{c}>0$. Let $\mathcal{Y}=(\mathcal{Y}^{(\rm d)}, \mathcal{Y}^{(\rm a)})$ be an $\text{ooSBM}(\gamma, c, \Tilde{c})$ with initial measure $(\mu_{\rm d}, \mu_{\rm a})$. Then, for any $s \geq 0$ we have
    \begin{equation*}
        \mathbb{E}(\norm{\mathcal{Y}^{(\rm d)}_s}) = \norm{\mu_{\rm d}} \frac{c+ \Tilde{c}e^{-s(c+\Tilde{c})}}{c+\Tilde{c}} + \norm{\mu_{\rm a}} \frac{c(1-e^{-s(c+\Tilde{c})})}{c+\Tilde{c}}; \hspace{0.1cm} \mathbb{E}(\norm{\mathcal{Y}^{(\rm a)}_s}) = \norm{\mu_{\rm d}} \frac{\Tilde{c}(1-e^{-s(c+\Tilde{c})})}{c+\Tilde{c}} + \norm{\mu_{\rm a}} \frac{ce^{-s(c+\Tilde{c})} + \Tilde{c}}{c+\Tilde{c}}.
    \end{equation*}
\end{theorem}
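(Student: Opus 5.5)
We reduce the computation to a first-moment calculation for a single on/off Brownian particle, using that the branching is critical. By linearity of expectation and the Poissonisation in Proposition \ref{proposition existence of ooSBM}, it suffices to treat initial conditions $(\mu_{\rm d},0)$ and $(0,\mu_{\rm a})$ separately and add the results. The formulas are linear in $\norm{\mu_{\rm d}}$ and $\norm{\mu_{\rm a}}$, and they do not depend on spatial position. We may therefore also argue that the spatial component plays no role: only the total masses enter, and these are driven purely by the switching and branching dynamics.

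\textbf{Step 1 (pre-limit first moments).} Consider $\text{ooBBM}(\gamma/\varepsilon,c,\Tilde c)$ started from a Poisson random measure with intensity $(\mu_{\rm d}/\varepsilon,\mu_{\rm a}/\varepsilon)$. Let $m_{\rm d}(s)$ and $m_{\rm a}(s)$ be the expected numbers of dormant and active particles at time $s$ descending from one ancestor.

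Branching of active particles occurs at equal rates $\gamma/(2\varepsilon)$ for splitting and death. Its contribution to the mean active count is therefore $(\gamma/(2\varepsilon))(2-1)-(\gamma/(2\varepsilon))=0$. The mean numbers consequently solve the linear system
\[
m_{\rm d}'=-\Tilde c\, m_{\rm d}+c\, m_{\rm a},\qquad m_{\rm a}'=\Tilde c\, m_{\rm d}-c\, m_{\rm a}.
\]
Equivalently, $(m_{\rm d}(s),m_{\rm a}(s))$ equals the law at time $s$ of the state of a single $\text{ooBM}(c,\Tilde c)$ particle, i.e.\ a two-state Markov chain with rates $c$ (active to dormant) and $\Tilde c$ (dormant to active). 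Solving this system gives
\[
P(\text{dormant at }s\mid\text{dormant at }0)=\frac{c+\Tilde c e^{-s(c+\Tilde c)}}{c+\Tilde c},\qquad P(\text{dormant at }s\mid\text{active at }0)=\frac{c(1-e^{-s(c+\Tilde c)})}{c+\Tilde c},
\]
together with the complementary expressions for the active state. Multiplying by $\varepsilon$ and by the expected initial counts $\norm{\mu_{\rm d}}/\varepsilon$ and $\norm{\mu_{\rm a}}/\varepsilon$ yields exactly the claimed formulas for $\mathbb{E}(\varepsilon\norm{\mathcal Z^{(\varepsilon,\cdot)}_s})$, for every $\varepsilon$.

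The same identity can be read off the on/off Brownian snake instead. Under the excursion measure, the expected exit measure at time $s$ should equal the law of the ooBM path at time $s$ (a many-to-one formula, as in Lemma \ref{lemma single particle motion with age process}). Either route produces the two-state chain.

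\textbf{Step 2 (passing to the limit).} This is the main obstacle: weak convergence in Proposition \ref{proposition existence of ooSBM} does not by itself give convergence of expectations. We need uniform integrability of $\varepsilon\norm{\mathcal Z^{(\varepsilon)}_s}$, which we obtain from a uniform second-moment bound.

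The total particle count $N^\varepsilon_s$ is dominated in a suitable sense by a critical binary branching process run at rate $\gamma/\varepsilon$ on active time only, since dormancy just pauses the branching. Its variance therefore satisfies
\[
\mathrm{Var}(\varepsilon N^\varepsilon_s)\le \varepsilon^2\Big(\frac{\norm{\mu}}{\varepsilon}+\frac{\norm{\mu}}{\varepsilon}\cdot\frac{\gamma s}{\varepsilon}\Big)=O(1),
\]
where $\norm{\mu}=\norm{\mu_{\rm d}}+\norm{\mu_{\rm a}}$. This comes from the standard second-moment equation, or by coupling the active-time clock, which is at most $s$.

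Since $\nu\mapsto\norm{\nu}$ is continuous for the weak topology and fixed times are a.s.\ continuity points of the limit, the one-dimensional marginals $\varepsilon\norm{\mathcal Z^{(\varepsilon,\cdot)}_s}$ converge in law. This holds for all but countably many $s$, and for all $s$ once we know ooSBM has no fixed jump times. Uniform integrability then gives convergence of the expectations. For remaining $s$ we conclude using right-continuity of the paths together with the continuity of the explicit formula in $s$, which identifies $\mathbb{E}\norm{\mathcal Y^{(\cdot)}_s}$ with the stated expressions for every $s\ge0$.
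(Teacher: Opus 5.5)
Your proof is correct, but it follows a genuinely different route from the paper's. The paper never returns to the particle systems. It takes the exit-measure representation of Theorem \ref{theorem ooSBM ffd convergence} and applies the compensation formula to the Poisson measure of snakes. It then evaluates each excursion-measure term with the many-to-one identity \eqref{equation local time general}, $\mathbb{N}_x\big(\int\Phi(\hat W_t)\,d\ell(t)\big)=E^\xi_x[\Phi(B(\tau))\mathbbm{1}_{\tau<\infty}]$. This is applied once for the event that the subordinator hits level $s$ (the active part) and once for the event that it reaches $[s,\infty)$ (the total). The dormant part is the difference, plus the $e^{-\Tilde{c}s}\mu_{\rm d}$ term. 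Lemma \ref{lemma single particle motion with age process} then converts these into ooBM expectations, giving the full first-moment measure $\langle \mu, K^{\%}_s\Phi\rangle$ of Proposition \ref{proposition expectation oosbm}, and $\Phi\equiv 1$ reduces it to the two-state chain. That approach computes directly on the limit object, so no interchange of limit and expectation is needed. The price is that it leans on the whole snake construction and on the time-change reduction to $\gamma=4$.

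Yours uses only Proposition \ref{proposition existence of ooSBM}: the mean ODE is exact for every $\varepsilon$ and every $\gamma$, and the cost is the uniform-integrability step, which you handle correctly. Two small remarks on that step. First, the $L^2$ bound is best justified not by a domination of particle counts, since critical branching counts are not stochastically monotone in time. Instead, note that the total count $N^\varepsilon$ is a martingale with $d\langle N^\varepsilon\rangle_t=(\gamma/\varepsilon)A^\varepsilon_t\,dt$ and $\mathbb{E}A^\varepsilon_t\le\mathbb{E}N^\varepsilon_0$; this gives exactly your bound $\mathrm{Var}(\varepsilon N^\varepsilon_s)\le\varepsilon\norm{\mu}+\gamma s\norm{\mu}$. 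Second, ooSBM has a continuous version, so every fixed $s$ is a continuity point of the limit. Your fallback for exceptional $s$, which would additionally need uniform integrability of the limit marginals (available via Fatou), is therefore not needed.

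Finally, your argument goes through verbatim for bounded continuous $\Phi$. It therefore also recovers the coincidence of the first-moment measures of ooBBM and ooSBM, which the paper only records in a remark.
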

This is proven in Section \ref{section moments}. The expectations in Theorem \ref{theorem expectation total masses} are constructed from the transition semigroup of a two-state continuous-time Markov chain with transition rates $c$ and $\Tilde{c}$ which corresponds to transitions between the active state and the dormant state in our setting. A noteworthy aspect of Theorem \ref{theorem expectation total masses} is that while the total mass process $(\norm{\mathcal{Y}^{(\rm d)}_s} \cup \norm{\mathcal{Y}^{(\rm a)}_s})_{s \geq 0}$ converges to $0$ a.s., \cite[Proposition 3.6]{blath2023off}, the expectations for the total mass of the active and dormant components do not converge to $0$ as $s \rightarrow \infty$. In fact, it can be seen from Theorem \ref{theorem expectation total masses} that the expectation of the total mass of the union of the dormant and active components of ooSBM is constant for all positive times. However, this is not too unexpected as this is a known property of critical branching processes; for example, it is also the case for SBM $\mathcal{X}$ started from a finite measure $\mu$ that the total mass converges to zero a.s. but the expectation of total mass of SBM stays constant, i.e. $\mathbb{E}(\mathcal{X}_s) = \norm{\mu}$ for all $s>0$. 
\par
As mentioned above, our main proof technique utilises the Brownian snake with discontinuous spatial motion as defined in \cite{BertoinLeGallLeJan1997} where we study a Brownian snake with spatial motion given by a Brownian component and a second independent component given by a compound Poisson process with unit drift. This snake has similarities with the Poisson snake introduced in Abraham and Serlet \cite{abraham2002poisson} which is a Brownian snake with spatial motion given by a Poisson process. We also note a related problem in Delmas and Dhersin \cite{delmas2003super} where they used the Brownian snakes along with time-changes to show the existence of superprocesses related to SBM that they refer to as SBM with interactions. In their work they apply a random time-change to each of the paths of the path-valued Brownian snake. Their work does not directly apply to our setting as they require the time change $\phi$ to satisfy $\phi'(s)>0$ for all $s>0$ and all paths and we would require $\phi'(s)=0$ during the dormancy periods of paths and so we instead use the aforementioned literature on Brownian snakes with discontinuous spatial motion.
\subsection{Proof idea and structure of the paper}\label{subsection proof idea}
The starting point of the following construction will be a so-called bivariate snake which we define in Theorem \ref{theorem bivariate brownian snake}. This is a Brownian snake that has two motion components: a Brownian motion, and a subordinator which consists of a compound Poisson process plus a unit drift and will be used to describe the dormancy behaviour. From this richer Brownian snake, we will then be able to construct an ooSBM and a classical SBM, which are automatically coupled. 
\par
To achieve this we identify a collection of ooBBMs inside the bivariate snake that converge a.s. to an ooSBM. Specifically, we show that the finite-dimensional distributions of the limiting process coincide with the finite-dimensional distributions of ooSBM. We only construct $\text{ooSBM}(4, c, \Tilde{c})$ as it is a simple rescaling to obtain $\text{ooSBM}(\gamma, c, \Tilde{c})$ for general $\gamma > 0$ and we explain this rescaling in the proof of Theorem \ref{theorem equality ranges} in Section \ref{section range coupling proof}. In order to construct the ooBBM from the bivariate snake we will consider the contour function corresponding to the excursion and perform an $h$-erasure to obtain the contour function of a Galton-Watson tree. This Galton-Watson tree will serve as the genealogy of the ooBBM, which we obtain by adding the respective spatial motion processes to it. In Section \ref{section snake identification bbm} we define the \emph{$h$-erased contour function} for an excursion and use this to construct BBM and ooBBM from the same on/off Brownian snake in Proposition \ref{proposition identification oobmm h erased tree}. One of the core arguments for the representation of ooSBM via the on/off Brownian snake is the construction of suitable notions of active local time and dormant local time, which corresponds to the local times of the excursions at active and dormant levels.
\par
Since the rescaling of the ooBBM to the limit ooSBM, see \cite[Theorem 1.5]{blath2023off}, is the same as the rescaling for the local time in the classical downcrossing argument for approximation of local time, see e.g. \cite{morters2010brownian}, there exists a representation of the limiting ooSBM in terms of the limiting active/dormant local time. To show this, in Lemma \ref{lemma alternative characterisation ooBBM from snake} we give an alternative characterisation of the ooBBM and BBM constructed in Proposition \ref{proposition identification oobmm h erased tree} which we use in Theorem \ref{theorem ooSBM ffd convergence}, along with the notion of an exit local time given in Proposition \ref{proposition snake reflecting brownian motion}, to construct exit measures with the distributions of ooSBM at fixed times. Via standard arguments given in Proposition \ref{proposition ooSBM continuous modification} one can then also obtain a continuous version of this limiting superprocess. As a result we get an immediate coupling of ooSBM with classical SBM, without needing to couple the prelimiting ooBBMs and BBMs. This is particularly useful for the range coupling result in Theorem \ref{theorem equality ranges}, see Section \ref{section range coupling proof} for the proof, as the range operator is in general lacking continuity properties and therefore it was not clear that the range coupling would survive a limiting procedure. We use the finite propagation of SBM to prove the increasing support property and the dimension of the support of ooSBM in Theorem \ref{theorem increasing support}. Finally, in Section \ref{section moments}, we prove Theorem \ref{theorem expectation total masses} and use the snake to calculate the expected total dormant and active masses.
\section{Construction of the on/off Brownian snake}\label{section snake construction}
We recall the set-up given in \cite[Section 4]{BertoinLeGallLeJan1997} for a Brownian snake with discontinuous spatial motion. Let $\xi$ be a c\`adl\`ag Borel right Markov process with values in a complete and separable metric space $(E, d_E)$ and let the process $\xi$ be defined on the Skorokhod space $\mathbb{D}([0,\infty), E)$. We fix constants the $\gamma$, $c$, $\Tilde{c}>0$ throughout this section and the rest of this paper with the aim to construct $\text{ooSBM}(\gamma, c, \Tilde{c})$. 
\par
We define a killed path in $E$ to be a c\`adl\`ag function $w:[0,\zeta) \rightarrow E$, where we call $\zeta = \zeta(w) \in (0,\infty)$ the lifetime of $w$. Furthermore, for all $x \in E$, we call $x$ a killed path with lifetime $0$. Let $\mathcal{W}$ be the set of all killed paths and for $x \in E$ let $\mathcal{W}_x$ denote the set of all the killed paths $w$ such that $w(0) = x$.
\par
We define, for $w$, $w' \in \mathcal{W}$ 
\begin{equation}\label{equation snake killed path metric}
    d(w,w') = d_E(w(0), w'(0)) + |\zeta(w) - \zeta(w')| + \int_{y=0}^{\zeta(w) \wedge \zeta(w')} (d^{\text{Sko}}(w|_{[0,y]}, w'|_{[0,y]}) \wedge 1) dy,
\end{equation}
where $w|_{[0,y]}$ denotes the restriction of $w$ to $[0,y]$. This defines a metric $d$ on $\mathcal{W}$, and $(\mathcal{W}, d)$ is a complete and separable metric space. However we note that a killed path does not necessarily have a left limit at $\zeta(w)$. 
\par
Consider $w \in \mathcal{W}_x$ with lifetime $\zeta(w)>0$, i.e. $w=(w(y), 0 \leq y < \zeta(w))$. If $0 \leq a < \zeta(w)$, and $b \geq a$, we let $Q_{a,b}(w, dw')$ be the unique probability measure on $\mathcal{W}_x$ such that
\begin{enumerate}
    \item $\zeta(w') = b$, $Q_{a,b}(w, dw')$ a.s.
    \item $w'(y) = w(y)$, for all $y \in [0,a]$, $Q_{a,b}(w, dw')$ a.s.
    \item the law under $Q_{a,b}(w, dw')$ of $(w'(a+y), 0 \leq y < b-a)$ is the law of $(\xi_y, 0 \leq y < b-a)$ under $P^\xi_{w(a)}$.
\end{enumerate}
When the left limit $w(a-)$ exists we can extend this definition to the case $a=\zeta(w)$ and in this instance we modify the third point so that the law of $(w'(a+y), 0 \leq y < b-a)$ is the law of $(\xi_y, 0 \leq y < b-a)$ under $P^\xi_{w(a-)}$.
\par
We write $\delta_x$ to denote the Dirac measure in $x$. Note that $Q_{0,0}(w, dw')= \delta_x(dw')$ and $Q_{0,b}(w, dw')$ is the law of $(\xi_y, 0 \leq y < b)$ under $P^\xi_x$ which we denote by $P_x^{\xi,b}(dw')$. We also set $Q_{0,b}(x, dw')=P_x^{\xi,b}(dw')$. 
\par
Let $R=(R_t, t \geq 0)$ be a reflecting Brownian motion on $[0,\infty)$ and for $r \geq 0$ let $P^R_r$ denote the law of $R$ with $R_0 = r$. We denote by $\theta^r_t(dadb)$ the joint distribution of $(\inf(\{R_{t'}: t' \in [0,t]\}), R_t)$ under $P^R_r$:
\begin{equation*}
    \theta^r_t(dadb) = \frac{2(r+b - 2a)}{\sqrt{2\pi t^3}} \exp -\Big(\frac{(r+b-2a)^2}{2t}\Big) \mathbbm{1}_{(0 < a < r \wedge b)} da db + \frac{\sqrt{2}}{\sqrt{\pi t}} \exp - \Big(\frac{(r + b)^2}{2t}\Big) \mathbbm{1}_{(0 < b)} \delta_0(da)db.
\end{equation*}
For $r>0$ we define the set $\mathcal{G}_r \subset \mathcal{C}([0,\infty), [0,\infty))$ of functions $g:[0,\infty) \rightarrow [0,\infty)$ with $g(0)=r$ and the following properties:
\begin{enumerate}
    \item for some $t>0$ we have $\inf(\{g(t'), 0 \leq t' \leq t\})<g(0)$;
    \item for all $\alpha \in (0,1/2)$ we have that $g$ has H\"older continuous paths with exponent $\alpha$. 
\end{enumerate}
In our setting $E=\mathbb{R}^d \times [0,\infty)$ and $d_E$ is the Euclidean metric. Furthermore our spatial motion $\xi$ is of the form $(B, S)$ where $B$ and $S$ are independent, $B$ is a $d$-dimensional Brownian motion and $S$ is a compound Poisson subordinator with unit drift and jump intensity $\pi(ds) = c\Tilde{c}e^{-\Tilde{c}s} ds$. For $x \in \mathbb{R}^d \times [0,\infty)$ let $P_x^\xi$ be the distribution on $\mathbb{D}([0,\infty), \mathbb{R}^d \times [0,\infty))$ of $\xi$ with initial value $x$.
\begin{theorem}[Bivariate Brownian snake]\label{theorem bivariate brownian snake}
Let $\xi$ be a c\`adl\`ag Borel right Markov process with values in a complete and separable metric space $(E, d_E)$ and let the process $\xi$ be defined on the Skorokhod space $\mathbb{D}([0,\infty), E)$. Let $x \in E$ and let $w \in \mathcal{W}_x$. Then there exists a continuous time-homogeneous strong Markov $\mathcal{W}_x$-valued process $X=(X_t, t \geq 0)$ with $X_0=w$ with the following properties:
\begin{enumerate}
\item The transition kernels are given by the formula
\begin{equation*}
    K_t(w', dw'') = \int_{[0,\infty)^2} \theta^{\zeta(w')}_t(dadb) Q_{a,b}(w', dw'').
\end{equation*}
Let $\mathbb{P}_w$ denote the distribution for this snake on $\mathcal{C}([0,\infty),\mathcal{W}_x)$. 
\item Furthermore the process $R = (R_t, t \geq 0)$ where $R_t \coloneqq \zeta(X_t)$, which we call the \emph{contour process} for the snake, is a reflecting Brownian motion on $[0,\infty)$ (started from $\zeta(w)$); 
\item Finally let $g \in \mathcal{G}_{\zeta(w)}$. Then the conditional distribution of $(X_t, t \geq 0)$ given the contour process $R = g$ is that of a continuous time-inhomogeneous Markov process. Let $w' \in \mathcal{W}_x$, suppose that either $w'(\zeta(w')-)$ exists or $\inf(\{g_u, t \leq u \leq t'\})<g_t$ for some $t'>t$. Then we can explicitly state the transition kernel as
\begin{equation*}
    K^g_{t,t'} (w', dw'') = Q_{\inf(\{g_u, t \leq u \leq t'\}), g_{t'}}(w', dw''),
\end{equation*}
and this collection of time-inhomogeneous Markov processes form a family of regular conditional distributions for the strong Markov process defined above.
\end{enumerate}
The process $(X_t, t \geq 0)$ is a \emph{Brownian snake} or more simply a \emph{snake}. In the setting where $E=\mathbb{R}^d \times [0,\infty)$ and the spatial motion of $\xi$ is of the form $(B, S)$ where $B$ and $S$ are independent, $B$ is a $d$-dimensional Brownian motion and $S$ is a compound Poisson subordinator with unit drift and jump intensity $\pi(ds) = c\Tilde{c}e^{-\Tilde{c}s} ds$, we call this the \emph{bivariate Brownian snake} or more simply the \emph{bivariate snake}.
    \begin{proof}
        The first two points are stated in \cite[Proposition 5]{BertoinLeGallLeJan1997}. Their proof quotes the proof of Le Gall \cite[Theorem 1.1]{legall1993class}. They explain how the proof in \cite{legall1993class} generalises to their more general setting in \cite{BertoinLeGallLeJan1997} and as the third point is an intermediate result in the proof of \cite{legall1993class} we can conclude this additional result by the same logic. In particular, in \cite{legall1993class}, the proof first considers a fixed contour process, in our case $g \in \mathcal{G}_r$, then constructs a probability measure for the snake by stating the marginals for a fixed number of time points $t_1 < \dots < t_n$ using the kernels $K_{t_i, t_{i+1}}^g$ and then applies the Kolmogorov-\v{C}entsov theorem. This applies and suffices in our setting, as $K_{t,t'}^g(w',dw'')$-a.e. $w''$ has a left limit at $\zeta(w'')$ and $P^R_r(\mathcal{G}_r) = 1$.
    \end{proof}
\end{theorem}
For $x \in E$, $w \in \mathcal{W}_x$, let $\mathbb{E}_w$ denote the expectation operator for the Brownian snake started at $w$. The transition kernels for the strong Markov process above are such that for every $0 \leq t' \leq t$ and for every non-negative measurable functional $\Phi$ on $\mathcal{W}_x$, we have
\begin{equation*}
    \mathbb{E}_w(\Phi(X_t) | (X_r, r \leq t')) = \int K_{t-t'}(X_{t'}, dw)\Phi(w), \hspace{0.25cm} \mathbb{P}_w-\text{a.s.}
\end{equation*}
where $\mathbb{E}_w$ is the expectation operator for $X$ under $\mathbb{P}_w$. 
\par
For $x \in E$, $w \in \mathcal{W}_x$ and $g \in \mathcal{G}_{\zeta(w)}$, we define $\mathbb{P}_{w | g}$ to be the probability measure for the time-inhomogeneous Markov process with contour process $g$ on the space of measure-valued functions. Let $\mathbb{E}_{w|g}$ be the expectation operator. Then for any bounded continuous $\mathcal{W}_x$-valued function $\Phi$ we have that
\begin{equation*}
    \mathbb{E}_{w|g}(\Phi(X_t)|(X_r, r \leq t')) = \int K^g_{t',t}(X_{t'}, dw')\Phi(w'), \hspace{0.25cm} \mathbb{P}_{w|g}-\text{a.s.}
\end{equation*}
Note we have for $x \in E$ and $w \in \mathcal{W}_x$ that $\mathbb{P}_w(\cdot) = \int_g P^R_{\zeta(w)}(dg)\mathbb{P}_{w | g}(\cdot)$. We can also start the process from $x$, the path of length $0$ and we write $\mathbb{P}_x$ for this distribution and $\mathbb{E}_x$ for this expectation operator.
\par
In our setting for $X_t=(B_t,S_t)$ we cannot exclude the case where $\lim_{y \rightarrow \zeta(X_t)-}S_t(y)=\infty$ and so we must take care when we deal with this component of the bivariate snake. It is noted in \cite[Section 4]{BertoinLeGallLeJan1997} that $\mathbb{P}_w$-a.s. for every $t<t'$, the killed paths $X_t$, $X_{t'}$ coincide for $r<m(t,t') \coloneqq \inf(\{\zeta(X_{r'}): r' \in [t,t']\})$ and that they also coincide at $r=m(t,t')$ provided that $m(t, t') < \zeta(X_t) \wedge \zeta(X_{t'})$. We use this property throughout and we refer to this as the \emph{snake property}. 
\par
In the setting of Le Gall \cite{le1999spatial} the spatial motion is continuous and he works with stopped paths $\mathcal{W}'$ where $w \in \mathcal{W}'$ is a continuous map $w:[0,\zeta] \rightarrow E' \coloneqq \mathbb{R}^d$, where $\zeta=\zeta(w)<\infty$ as before. There is a complete and separable metric space $(\mathcal{W}', d')$ with metric $d'$ defined by
\begin{equation*}
    d'(w, w') = |\zeta(w) - \zeta(w')| + \sup\big(\{d_{E'}(w(y \wedge \zeta(w), w'(y \wedge \zeta(w')), y \geq 0\}\big). 
\end{equation*}
which is a stronger metric than $d$ on $\mathcal{W}'$ obtained by replacing $E$ and $\mathcal{W}$ by $E'$ and $\mathcal{W}'$ in \eqref{equation snake killed path metric}. We give a brief discussion about this continuous map setting in Remark \ref{remark head of the snake} below.
\begin{remark}[Head of the snake]\label{remark head of the snake}
    In the setting of Le Gall \cite{le1999spatial} the spatial motion is continuous. In this setting there exists a Brownian snake $(W_t)_{t \geq 0}$, with distribution $\mathbb{P}_{w'}'$ on $\mathcal{C}([0,\infty), \mathcal{W}')$, that is continuous with respect to the metric $d'$ and has a continuous \emph{head of the snake} process $(\hat{W}_t = W_t(\zeta(W_t)))_{t \geq 0}$ $\mathbb{P}'_{w'}$-a.s.
    \par
    We can also define a snake $(V_t)_{t \geq 0}$ with spatial motion given by a subordinator $S$ with the same contour process that is continuous with respect to the metric $d$ on a common probability space. We can show that the path-valued process with both spatial components $(W_t,V_t)_{t \geq 0}$ is continuous with respect to the metric $d$ on the product space. By a suitable coupling with the bivariate snake $X$ given in Theorem \ref{theorem bivariate brownian snake} we have that $(W,V)$ is a modification of $X$ and both processes are path-continuous with respect to the metric $d$. 
    \par
    We can conclude, for all $w \in \mathcal{W}$ whose left limit at $\zeta(w)$ exists, that $\mathbb{P}_w$-a.s. the Brownian component $(B_t, t \geq 0)$ of the snake $X$ has a head $\hat{B}_t = B_t(\zeta(B_t)-)$ for all $t \geq 0$ and that $(\hat{B}_t, t \geq 0)$ is continuous. This follows as two continuous processes that are modifications of one another are indistinguishable, see e.g. Revuz-Yor \cite[Definition I.1.7]{revuz2013continuous}.
\end{remark}
We explore more properties of the bivariate Brownian snake in Section \ref{section snake local time measure}, in particular the notion of an exit local time. Our next aim is to construct a Poisson random measure on the space of snakes which we give in Definition \ref{definition oo snake}. In order to do this we require the excursion measure $n$ for reflecting Brownian motion and we give a summary of this excursion measure here. We define the space $\mathcal{E} \subset \mathcal{C}([0,\infty),[0,\infty))$ of excursions as continuous functions $f$ on $[0,\infty)$ that take positive values on some interval $(0,\sigma)$ and the value $0$ on $\{0\} \cup [\sigma, \infty)$ for some $\sigma \in (0,\infty)$; we write this $\sigma$ as $\sigma(f)$ and call it the \emph{lifetime of the excursion $f$}. For an excursion $f = (f_t, t \geq 0)$ and for $y>0$ we define $T_y(f) \coloneqq \inf (\{t \in [0,\infty) : f_t = y\})$ and we call $\sigma(f)$.
\begin{definition}[Excursion measure $n$ for reflecting Brownian motion]\label{definition brownian excursion}
    We define the excursion measure $n$ for reflecting Brownian motion $R$ on $[0,\infty)$) as the unique sigma-finite measure on $\mathcal{E}$ with the following properties, see e.g. \cite[Chapter XII]{revuz2013continuous}.
    \begin{enumerate}
        \item For all $y \in (0,\infty)$ we have $n(T_y(f)<\infty) = 1/2y$.
        \item For all $y \in (0,\infty)$, the conditional distribution $n(\cdot| T_y(f) < \infty)$ is the distribution of the following process $Z = (Z_t)_{t \geq 0}$. We have that $(Z_t)_{0 \leq t \leq T_y(Z)}$ has the law of a Bessel process of dimension $3$ started from $0$, and let us denote this by $P_0^{\text{Bes}(3)}$. Then the process $(Z_{T_y(Z) + t})_{t \geq 0}$ is independent of $(Z_t)_{0 \leq t \leq T_y(Z)}$ and has the law of a Brownian motion on $[0,\infty)$ started at $y$ and absorbed when it hits $0$. The Bessel process of dimension $3$ is a diffusion with infinitesimal drift $\mu(y) = 1/y$ and infinitesimal variance $v(y) = 1$, see e.g. Revuz-Yor \cite[Chapter XI]{revuz2013continuous}.
    \end{enumerate}
\end{definition}
Note that, for $x \in E$, we can define an excursion measure for the bivariate snake denoted by $\mathbb{N}_x$ and given by $\mathbb{N}_x(\cdot) = \int_f n(df)\mathbb{P}_{x | f}(\cdot)$. This normalisation does not affect the conditional distributions $n(\cdot| T_y(f) < \infty)$, $\mathbb{N}_x(\cdot| T_y(f) < \infty) = \int_f n(df| T_y(f) < \infty)\mathbb{P}_{x | f}(\cdot)$.
\par
We now return to the spatial process $\xi$ itself and prove a lemma that we use later.
\begin{lemma}[On/off Brownian motion from $\xi$]\label{lemma single particle motion with age process}
    Let $b \in \mathbb{R}^d$ and consider a process $\xi=(\xi_y, y \geq 0)$ with distribution $P^\xi_{(b,0)}$. Let $B$ and $S$ be the components of $\xi$, so that $\xi=(B,S)$. Let
    \begin{equation*}
        H^{(s)}(\xi) \coloneqq \inf \{ y \in [0,\infty): S(y) \geq s\} 
    \end{equation*}
    and let $A^{(s)}(\xi) \coloneqq 1$ if $H^{(s+\varepsilon)}(\xi)>H^{(s)}(\xi)$ for all $\varepsilon>0$ and $A^{(s)}(\xi) \coloneqq 0$ otherwise. Then the process $(B(H^{(s)}(\xi)), A^{(s)}(\xi))_{s \geq 0}$ is an $\text{ooBM}(c, \Tilde{c})$ started from $(b,1) \in \mathbb{R}^d \times \{0,1\}$, i.e. a single particle that does not die or split and that switches between dormant and active states (which is equivalent to the position of the single particle of an $\text{ooBBM}(0,c,\Tilde{c})$ started at $\delta_{(b,1)}$).
    \begin{proof}
        Let $C=(C_s, s \geq 0)$ be given by $C_s \coloneqq B(H^{(s)}(\xi))$ and let $\Tilde{A}=(\Tilde{A}_s, s \geq 0)$ be given by $\Tilde{A}_s \coloneqq A^{(s)}(\xi)$. We show that $(C, \Tilde{A})$ has the correct spatial motion and dormancy dynamics. Let
        \newline
        $T\coloneqq \inf(\{y:S(y-) \neq S(y)\})$. The time $T$ represents the first discontinuity point of $S$ (and $\xi$). We have that 
        \begin{equation*}
            C_s = B_s, \hspace{0.5cm} \Tilde{A}_s = 1, \hspace{0.5cm}  0 \leq s < T.
        \end{equation*}
        Let $T'\coloneqq S(T) - S(T-)$. Then we have that 
        \begin{equation*}
            C_s = B_T, \hspace{0.5cm} \Tilde{A}_s = 0, \hspace{0.5cm} T \leq s < T + T'.
        \end{equation*}
        Now by construction we have that $T \sim \text{Exp}(c)$ and $T' \sim \text{Exp}(\Tilde{c})$ which gives the correct dormancy dynamics. The spatial motion is also correct as $C$ evolves with the dynamics of a Brownian motion when $\Tilde{A}=1$ and $C$ does not move when $\Tilde{A}=0$. The result follows by applying the same argument to each subsequent discontinuity point of $S$.
    \end{proof}
\end{lemma}
We now define what we call the \emph{age process} for a snake.
\begin{definition}[Age process]\label{definition age process}
    Fix $s>0$, $x \in E$ and $w \in \mathcal{W}_x$. Consider a bivariate snake $X = (X_t, t \geq 0)$ started from $w$. For $t \geq 0$ let $B_t$ and $S_t$ be the Brownian and subordinator components of $X_t$, so that $X_t = (B_t, S_t)$. Let $R = (R_t, t \geq 0)$ be the contour process for $X$ given by, for $t \geq 0$, $R_t \coloneqq \zeta(X_t)$. For $t \geq 0$ we define 
    \begin{equation*}
        H^X_t(s) \coloneqq \inf(\{y \in [0,R_t): S_t(y) \geq s \} \cup \{R_t\})
    \end{equation*}
    and we note $H^X_t(s) = H^{(s)} \circ X_t$.
    We say that $t$ is active at (time) $s$ if $H^X_t(s + \varepsilon)>H^X_t(s)$ for all $\varepsilon > 0$ and $t$ is dormant at (time) $s$ otherwise. We define a function $A^X_t:[0,\infty) \rightarrow \{0,1\}$ which is $A^X_t(s) = 1$ if $t$ is active at $s$ and $A^X_t(s) = 0$ otherwise (note again that $A^X_t(s) = A^{(s)} \circ X_t$). Finally we define what we call the residual process $(\text{Res}^X_t(s))_{t \geq 0}$ at $s$ by $\text{Res}^X_t(s) \coloneqq R_t - H^X_t(s)$. We drop the superscript $X$ notation and write $H_t \coloneqq H^X_t$, $A_t \coloneqq A^X_t$, $\text{Res}_t \coloneqq \text{Res}^X_t$ when the snake $X$ is clear from context.
\end{definition}
As with the Brownian snake for SBM, the parameter $t \geq 0$ in Definition \ref{definition age process} can be interpreted as a particle index for a continuum of particles and we can imagine integrating over the collection of these particles to obtain the measure-valued process ooSBM. If we consider a graph of the contour function, then heuristically the parameter $t$ would be on the $x$-axis and the parameter $s$ moves up the page in the direction of the $y$-axis (see Figure \ref{figure snake age process function}). We now state some properties of the age process.
\begin{lemma}[Structure of the age process/stopping line]\label{lemma age process properties}
    Let $s>0$, let $x \in E$, let $w \in \mathcal{W}_x$. For $X$ a bivariate snake under $\mathbb{P}_w$ (respectively $\mathbb{N}_x$) we have that $(H^X_t(s), t \geq 0)$, $(\text{Res}^X_t(s), t \geq 0)$ are both continuous $\mathbb{P}_w$-a.s. (respectively $\mathbb{N}_x$-a.e.). Let $R=(R_t, t \geq 0)$ be the contour process defined by $R_t \coloneqq \zeta(X_t)$. Define $V_s \coloneqq \{t \geq 0: \text{Res}^X_t(s) > 0 \}$. We have that $V_s$ is an open set $\mathbb{P}_w$-a.s. ($\mathbb{N}_x$-a.e.). Also, for the open set $V_s = \cup_{j \in J_s} (a_j,b_j)$, where the sets $(a_j,b_j)$ are disjoint open intervals over some countable index set $J_s$, we have for each $j \in J_s$ the function $t \mapsto H^X_t(s)$ is constant on $[a_j,b_j]$ $\mathbb{P}_w$-a.s. ($\mathbb{N}_x$-a.e.).
    \begin{proof}
        Let $t>0$. We consider two cases. First assume $H^X_t(s) < R_t$ and let $\delta \coloneqq R_t - H^X_t(s)$. Then, by continuity of the contour process $(R_{t'}, t' \geq 0)$ there exists $\delta'>0$ such that for $t' \in [t-\delta', t+\delta']$ we have $R_{t'}>H^X_t(s)+ \delta/2$. In this case, by the snake property of $X$, we have that $t' \mapsto H^X_{t'}(s)$ is constant on $[t-\delta', t+\delta']$ and so $t' \mapsto H^X_{t'}(s)$ is continuous at $t$. It also shows us that $V_s$ is open.
        \par
        Before we show continuity for the case where $H^X_t(s) = R_t$, we show the final part of the lemma. Let $(a_j,b_j)$ be an open interval of $V_s$. Let $(c_j, d_j)$ be the largest open interval with $(a_j,b_j) \subseteq (c_j,d_j)$ and for which constancy extends, i.e. such that $t \mapsto H^X_t(s)$ is constant on $(c_j,d_j)$. It follows by the same arguments as in the previous paragraph that $(c_j,d_j)=(a_j,b_j)$. Moreover, we have that $X_{a_j}|_{[0,R_{a_j})} = X_t|_{[0,R_{a_j})}$ for $t \in (a_j,b_j)$ by the snake property and so we can conclude that the constancy extends to the boundary point $a_j$. By the same logic we have that the constancy extends to the other boundary point $b_j$.
        \par
        Now we assume that $H^X_t(s) = R_t$ and we show continuity at $t$ in this case. We show left- and right-continuity separately. Let $(t_n, n \in \mathbb{N})$ be a non-decreasing sequence that converges to $t$, i.e. $t_n \uparrow t$. If $t_n \in V_s$ there exists $j_n \in J_s$ such that $t_n \in ((a_{j_n}, b_{j_n}))$, $t' \mapsto H^X_{t'}(s)$ is constant on $[a_{j_n}, b_{j_n}]$ and $a_n$, $b_n \notin V_s$. Define the sequence $t'_n$ by $t'_n = b_{j_n}$ for $t_n \in V_s$ and $t'_n=t_n$ otherwise. Note that for this new sequence we have $t'_n \uparrow t$. Therefore we have 
        \begin{equation*}
            \lim_{n \rightarrow \infty} H^X_{t_n}(s) = \lim_{n \rightarrow \infty} H^X_{t'_n}(s) = \lim_{n \rightarrow \infty} R_{t'_n} = R_t = H^X_t(s).
        \end{equation*}
        This shows left-continuity at $t$. The same argument holds for right-continuity where we use $a_n$ in place of $b_n$ to define the sequence $(t'_n, n\geq 1)$.
    \end{proof}
\end{lemma}
We give a remark about the age process/stopping line.
\begin{remark}[Stopping lines]
    For a bivariate snake $X$ and a fixed time $s$, the age process $(H^X_t(s))_{t \geq 0}$ can be interpreted as a stopping line for $X$, see e.g. Chauvin \cite{chauvin1991product} where each path $X_t$ is stopped individually at the stopping time for their path. The snake property ensures the dependence required for the stopping times to form a stopping line.
    \par
    Furthermore, there are similarities to the notion of a frost for fragmentation processes as defined in Bertoin \cite{bertoin2002self}. In the setting of \cite{bertoin2002self}, we have a fragmentation process where each fragment is frozen independently of the others and in other setting we have that each path $X_t$ is stopped individually at the stopping time, but the snake property means that all paths that correspond to the same fragment in the corresponding fragmentation process are stopped at the same time. 
\end{remark}
At this point we briefly mention a possible confusion due to notation in this section. An element of $\mathbb{R}^d \times \{0,1\}$ denotes the position of a particle of an ooBBM as defined in Definition \ref{definition bbm and oobbm} and we use this in Lemma \ref{lemma single particle motion with age process}. On the other hand, an element of $E=\mathbb{R}^d \times [0,\infty)$ denotes the starting point of a bivariate snake as in Theorem \ref{theorem bivariate brownian snake}, and we use this latter notation in the next definition.
\begin{definition}[The on/off Brownian snake]\label{definition oo snake}
    Consider finite measures $\mu_{\rm d}$ and $\mu_{\rm a}$ on $\mathbb{R}^d$. Let $(\Omega, \mathcal{F}, \mathbb{P}_{(\mu_{\rm d},\mu_{\rm a})})$ be a probability space, let $\mathbb{E}_{(\mu_{\rm d},\mu_{\rm a})}$ be the expectation operator, and suppose there exists on this space a Poisson random measure $\sum_{i \in I} \delta_{f^i}$ on the space of excursions in $\mathbb{R}_+$ with intensity measure $(\norm{\mu_{\rm d}} + \norm{\mu_{\rm a}})n(\cdot)$, where $n$ is the It\^o excursion measure for Brownian motion, for which we can mark the excursions in the following way. For $i \in I$:
    \begin{enumerate}
        \item with probability $\norm{\mu_{\rm d}}/(\norm{\mu_{\rm d}} + \norm{\mu_{\rm a}})$ we mark $f^i$ with $(b_i,s_i)$ where $b_i$ is sampled from the normalised probability distribution $\Tilde{\mu}_d \coloneqq \mu_{\rm d}/\norm{\mu_{\rm d}}$ and $s_i$ is sampled independently from the $\text{Exp}(\Tilde{c})$ distribution (the exponential distribution with density $\Tilde{c}e^{-\Tilde{c}z}$, $z>0$);
        \item with probability $\norm{\mu_{\rm a}}/(\norm{\mu_{\rm d}} + \norm{\mu_{\rm a}})$ we mark $f^i$ with $(b_i,s_i)$ where $b_i$ is sampled from the normalised probability distribution $\Tilde{\mu}_a \coloneqq \mu_{\rm a}/\norm{\mu_{\rm a}}$ and $s_i = 0$.
    \end{enumerate}
    We then mark the pair $((b_i,s_i),f^i)$ with a snake $x^i$ where the snake is sampled from the probability distribution $\mathbb{P}_{(b_i,s_i)|f^i}(\cdot)$. Let
    \begin{equation}\label{equation snake poisson random measure}
        \sum_{i \in I} \delta_{((b_i,s_i), f^i, x^i)}
    \end{equation}
    be the Poisson random measure we obtain. We call this \emph{the on/off Brownian snake (with initial condition $(\mu_{\rm d}, \mu_{\rm a})$)}. 
\end{definition}
We have abused terminology somewhat here as what we have defined as an on/off Brownian snake is not a path-valued process that evolves in time (as is the usual setting for snakes) but instead is a Poisson random measure of path-valued processes that evolve in time. This is just so that we refer to a single snake when we refer to a single superprocess. 
\section{Identification of ooBBM with the on/off Brownian snake}\label{section snake identification bbm}
In this section we construct, for all $\varepsilon > 0$, both  $\text{BBM}(4/\varepsilon)$ and $\text{ooBBM}(4/\varepsilon, c, \Tilde{c})$ from the same on/off Brownian snake defined in Definition \ref{definition oo snake}. Recall the stopping time $T_h(f)$ for $h>0$ for an excursion function $f \in \mathcal{E}$ from the previous section. We extend this definition to a $\mathcal{W}$-valued $X$ and define $T_h(X) \coloneqq \inf(\{t \in [0,\infty): \zeta(X_t) = h \})$. We also extend the definition of the lifetime $\sigma$, defined for excursions, and write $\sigma(X)$ for the lifetime of the contour process $(\zeta(X_t), t \geq 0)$ in the same way and call this the \emph{lifetime of the snake}. We give a decomposition for the distribution $\mathbb{N}_x(\cdot | T_h<\infty)$ on $\mathcal{C}([0,\infty), \mathcal{W})$.
\begin{proposition}\label{proposition bessel and brownian snakes}
    Let $x \in E$ and let $h>0$. The snake $X$ under the probability measure $\mathbb{N}_x(\cdot | T_h<\infty)$ can be expressed as the concatenation of the following two snakes.
    \begin{enumerate}
        \item $(\zeta(X_t), 0 \leq t \leq T_h(X))$ has the distribution $P_0^{\text{Bes}(3)}$. Conditional on $(\zeta(X_t), 0 \leq t \leq T_h(X))$ being given by $g \coloneqq (g_t)_{0 \leq t \leq T_h(g)}$, the process $(X_t, 0 \leq t \leq T_h(X))$ is a time-inhomogeneous snake with distribution $\mathbb{P}_{x|g}(\cdot)$.
        \item Conditional on $X_{T_h(X)}=w$ for an $E$-valued killed path $w$, the processes $(X_t, 0 \leq t \leq T_h(X))$ and $(X_{T_h(X)+t}, t \geq 0)$ are independent and the latter has law $\mathbb{P}_w(\cdot)$.
    \end{enumerate}
    \begin{proof}
        The proof follows almost immediately from Theorem \ref{theorem bivariate brownian snake} and the representation of the excursion measure $n$ for Brownian motion given in Definition \ref{definition brownian excursion}. The second part follows by the Markov property of the time-inhomogeneous snake $X$ conditionally given the contour process and of the decomposition $\mathbb{P}_w(\cdot) = \int_g P^R_{\zeta(w)}(dg)\mathbb{P}_{w|g}(\cdot)$.
    \end{proof}
\end{proposition}
\begin{remark}
    The \say{stopped Bessel snake} given by $(X_t, 0 \leq t \leq T_h(f))$ under $\mathbb{N}_x(\cdot|T_h<\infty)$ in Proposition \ref{proposition bessel and brownian snakes} is also a strong Markov process and the proof follows the same logic as for the Brownian snake. However we do not require this here. We also note that we can represent the distribution of the complete snake process $(X_t, t \geq 0)$ under $\mathbb{N}_x(\cdot|T_h<\infty)$ as $\mathbb{N}_x(\cdot | T_h<\infty) = \int_f \mathbb{P}_{x|f}(\cdot)n(df|T_h<\infty)$.
\end{remark}
We now recall the construction of $h$-erased contour processes from Le Gall \cite{le1989marches} with vertices that form an alternating exponential random walk, see also Neveu and Pitman \cite{neveu1989branching}. We use these processes to construct BBM and ooBBM. For an excursion $f=(f_t, t \geq 0) \in \mathcal{E}$ with $T_h(f)<\infty$ we define the following. Let $h>0$, let $\alpha^{(h)}_0=0$ and for at most finitely many $p' \geq 0$, following the construction set out in \cite[Section 4]{le1989marches}, we can define
\begin{equation}\label{equation beta}
    \gamma^{(h)}_{2p'}=\beta^{(h)}_{p'} = \inf \Big\{ t > \alpha^{(h)}_{p'} : f_t \geq \inf\{f_{t'}: \alpha^{(h)}_{p'} \leq t' \leq t \} + h\Big\}, \hspace{0.25cm} Z^{(h)}_{2p'} = f_{\beta^{(h)}_{p'}} - h,
\end{equation}
\begin{equation}\label{equation alpha}
    \gamma^{(h)}_{2p'+1} = \alpha^{(h)}_{p'+1} = \inf \Big\{ t > \beta^{(h)}_{p'} : f_t \leq \sup \{f_{t'} : \beta^{(h)}_{p'} \leq t' \leq t\} - h \Big\}, \hspace{0.25cm} Z^{(h)}_{2p'+1} = f_{\alpha^{(h)}_{p'+1}}.
\end{equation}
These stopping times define an alternating walk $(Z^{(h)}_p)_{0 \leq p \leq M^{(h)}}$ where $M^{(h)}$ is the largest $p \geq 0$ for which $\gamma^{(h)}_p<\infty$. Let $\Tilde{M}^{(h)} \coloneqq \lfloor M^{(h)}/2 \rfloor$. Let $Z^{(h)}_{M^{(h)}+1} \coloneqq 0$ and $\gamma^{(h)}_{M^{(h)}+1} \coloneqq \sigma(f)$. We define $f^{(h)}:[0,\infty) \rightarrow [0,\infty)$ to be the \emph{$h$-erased contour function} with local extrema given by the points $(\gamma^{(h)}_p)_{0 \leq p \leq M^{(h)}+1}$, i.e.
\begin{equation*}
        f^{(h)}(t) \coloneqq \left\{
            \begin{matrix}
                & 0, &  0 \leq t <  \gamma^{(h)}_0 \hspace{0.25cm} \text{ or } \hspace{0.25cm} t \geq \gamma_{M^{(h)}+1}^{(h)} \\
                & Z^{(h)}_p + \frac{(t-\gamma^{(h)}_p)(Z^{(h)}_{p+1} - Z^{(h)}_p)}{(\gamma^{(h)}_{p+1} - \gamma^{(h)}_p)}, & \gamma^{(h)}_p \leq t < \gamma^{(h)}_{p+1}, 0 \leq p \leq M^{(h)}.
            \end{matrix}
        \right.  
\end{equation*}
We recall the construction of a tree from a contour function, see e.g. Le Gall \cite[Chapter III]{le1999spatial}.
\begin{definition}[Tree from contour function]\label{definition tree from a contour function}
    We define a shifted excursion to be a continuous function $g:[0,\infty) \rightarrow [0,\infty)$ such that there exists $r \geq 0$ such that $g(t)=0$ for $t \in [0,r]$ and the function $\Tilde{g}:[0,\infty) \rightarrow [0,\infty)$ is an excursion where we have $\Tilde{g}(t) \coloneqq g(r+t)$ for $t \geq 0$. We define $\sigma(g) \coloneqq r+\sigma(\Tilde{g})$.
    \par
    We define a tree from a shifted excursion $g:[0,\infty) \rightarrow [0,\infty)$ as the following:
    \begin{enumerate}
        \item Each point $t \in [0,\infty)$ corresponds to a vertex of the tree at height $g(t)$;
        \item for $t$, $t' \in [0,\infty)$, $t<t'$, the vertex $t$ is an ancestor of the vertex $t'$ if and only if $g(t) = \inf(\{g(r): r \in [t,t']\})$. More generally, the quantity $\inf(\{g(r):r \in [t,t']\})$ is the height of the most recent common ancestor to $t$ and $t'$ and we call any $r$ for which this infimum is attained the most recent common ancestor of $t$ and $t'$;
        \item the distance between the vertices $t$ and $t'$ is defined to be $\Tilde{d}(t, t') = g(t) + g(t') - 2 \inf(\{g(r): r \in [t,t']\})$ and we identify $t$ and $t'$ (we write $t \sim t'$) if $\Tilde{d}(t,t')=0$.
    \end{enumerate}
    The tree coded by $g$ is the quotient set $[0,\infty)/\sim$ with the distance $\Tilde{d}$ and the genealogical relation defined in (2). We call the equivalence class of level $0$ the root and we note that the root is an ancestor of all the vertices.
    \par
    Finally we note that $[0,\infty)/\sim$ is equivalent to $[0,\sigma(g)]/\sim$. We use this second formulation throughout as it highlights the shifted excursion $g$ from which the tree is constructed.
\end{definition}
We note that $f^{(h)}$ defined above is a shifted excursion and we consider the tree $[0,\sigma(f^{(h)})]/\sim$ constructed from it as in Definition \ref{definition tree from a contour function}. Such trees are known as the \emph{$h$-erased trees of $f$}, see Neveu \cite{neveu1986erasing}. The tree $[0,\sigma(f^{(h)})]/\sim$ consists of leaves at distance $(Z^{(h)}_{2p'+1})_{0 \leq p' \leq \Tilde{M}^{(h)}}$ from the root and branch points at distance $(Z^{(h)}_{2p'})_{1 \leq p' \leq \Tilde{M}^{(h)}}$ from the root. We can split the rooted tree into its edges $(e^{(h)}_p)_{1 \leq p \leq M^{(h)}}$ which have as endpoints the vertices $(\gamma^{(h)}_p)_{0 \leq p \leq M^{(h)}}$ of the tree where for each $p$ the upper end of the edge $e_p^{(h)}$ has height $Z^{(h)}_p$. Furthermore, the edge $e^{(h)}_p$ is isometric to the interval $[Y^{(h)}_p,Z^{(h)}_p]$ where $Y_p^{(h)}$ is the height of the most recent common ancestor of $\gamma_p^{(h)}$ among the $\gamma_{2p'}^{(h)}$. See Figure \ref{figure snake contour function} for an illustration. 
\begin{remark}
The tree in Definition \ref{definition tree from a contour function} could also be encoded as a genealogical tree with edge lengths and Ulam-Harris indexing of vertices, see e.g. \cite{le1989marches}, but we code it as above to keep indexing notation consistent with the bivariate snake. 
\end{remark}
We define a distribution for an alternating random walk that we refer to below. Fix $h>0$ as above and let $(E^{(h)}_j)_{j \in \mathbb{N}}$ be a family of independent Exponential random variables where $E^{(h)}_j \sim \text{Exp}(1/h)$. Let $C^{(h)}_k \coloneqq \sum_{j=1}^k (-1)^{j+1} E^{(h)}_j$ and let $K^{(h)} \coloneqq \inf \{ n \geq 1: C^{(h)}_k < 0 \}$. Finally, let $(\Tilde{C}^{(h)}_k)_{1 \leq k \leq K^{(h)}}$ be defined by $\Tilde{C}^{(h)}_k \coloneqq C^{(h)}_k$ for $1 \leq k < K^{(h)}$, $\Tilde{C}^{(h)}_k \coloneqq 0$ for $k = K^{(h)}$. We denote the distribution of $(\Tilde{C}^{(h)}_k)_{1 \leq k \leq K^{(h)}}$ by $Q^{(h)}$.
\par
Let us now assume that $f$ is an excursion under the probability distribution $n(\cdot|T_1<\infty)$. Under this assumption Le Gall shows in the proof of \cite[Theorem 6]{le1989marches} that the process $(Z^{(1)}_p)_{1 \leq p \leq M^{(1)}+1}$ has distribution $Q^{(1)}$ as defined above. Le Gall also proves in \cite[Theorem 3]{le1989marches} that the rooted tree given by $[0,\sigma(f^{(1)})]/\sim$ is the genealogical tree of a critical binary branching process with iid $\text{Exp}(2)$ lifetimes.
\begin{remark}\label{remark branching general h case}
    Everything stated above generalises to any $h>0$ in the sense that $[0,\sigma(f^{(h)})]/\sim$ is the genealogical tree of a critical binary branching process with $\text{Exp}(2/h)$ lifetimes under $n(\cdot|T_h<\infty)$. 
\end{remark}
\begin{figure}
    \centering
    \includegraphics[width=0.8\textwidth]{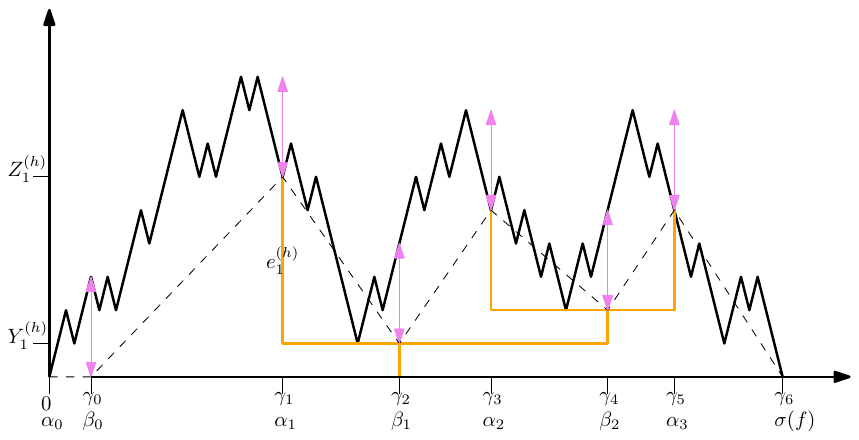}
    \caption{In black bold we have the excursion $f$ and in black dashed we have the contour function $f^{(h)}$. In orange we have the tree $[0,\sigma(f^{(h)})]/\sim$. We have labelled one edge $e^{(h)}_1$ of $[0,\sigma(f^{(h)})]/\sim$ and the corresponding values $Y^{(h)}_1$ and $Z^{(h)}_1$. The purple arrowed lines indicate the differences of $h$ that determine the times $(\gamma^{(h)}_p)_{0 \leq p \leq M^{(h)}}$. In this case $M^{(h)}=5$, the number of vertical edges of the tree.}
    \label{figure snake contour function}
\end{figure}
Let $h>0$ and now assume that $f \in \mathcal{E}$ is a random excursion with probability distribution $n(\cdot|T_h < \infty)$. We make some observations on the structure of the tree. The root of the tree is at $\gamma_0^{(h)}$,the first edge ends at $\gamma^{(h)}_{q^*}$ for the $1 \leq q^* \leq M^{(h)}$ for which $Z^{(h)}_{q^*}$ is minimal, $q^* \coloneqq \text{argmin}(\{Z^{(h)}_p: 1 \leq p \leq M^{(h)}\})$. With probability $1/2$ we have $\gamma^{(h)}_{q^*} = \alpha^{(h)}_1$ in which case the tree consists of a single edge or $\gamma^{(h)}_{q^*} = \beta^{(h)}_r$ for some $r$. In the second case we have the that the ancestor has two subtrees. It is shown in \cite[Lemma 4]{le1989marches} that $Z^{(h)}_{q^*} \sim \text{Exp}(2/h)$ and that, conditional on $\gamma^{(h)}_{q^*}$ having descendants, we have that the processes
\begin{equation*}
    (Z^{(h)}_p - Z^{(h)}_{q^*})_{1 \leq p \leq q^*}; \hspace{0.5cm} ((Z^{(h)}_{q^*+p} - Z^{(h)}_{q^*})^+)_{1 \leq p \leq M^{(h)} + 1-q^*}
\end{equation*}
are independent both with law $Q^{(h)}$. In the second case here we take the positive part denoted by $(\cdot)^+$ for notational convenience so that for the final point $(Z^{(h)}_{M^{(h)}+1} - Z^{(h)}_{q^*})^+=0$ as we require. The process then repeats recursively for both the subtrees.
\par
We also note from the proof of \cite[Theorem 6]{le1989marches} for all $1 \leq p \leq M^{(h)}$ that $Z_p^{(h)} \leq f_{\gamma_p^{(h)}}$. This is because for $Z_{2p'+1}^{(h)} = f_{\gamma_{2p'+1}^{(h)}}$ and $Z_{2p'}^{(h)} = f_{\gamma_{2p'}^{(h)}} - h$ for $0 \leq p' \leq \Tilde{M}^{(h)}$.
\par
Furthermore, for $0 \leq p' \leq \Tilde{M}^{(h)} - 1$, Le Gall states that $Z^{(h)}_{2p'} = \inf(\{f_t: \alpha_{p'}^{(h)} \leq t \leq \beta_{p'}^{(h)}\})$ and we can further note that $Z^{(h)}_{2p'} \leq \inf(\{ f_t:\beta_{p'}^{(h)} \leq t \leq \alpha_{p'+1}^{(h)}\})$ because
\begin{equation*}
    f_{\beta^{(h)}_{p'}} \leq \sup \{f_{t'} : \beta^{(h)}_{p'} \leq t' \leq t\} \hspace{0.25cm} \text{   for all   } t>\beta^{(h)}_{p'} \hspace{0.25cm} \text{ and so } \hspace{0.25cm} \alpha^{(h)}_{p'+1} \leq \inf\{t' > \beta^{(h)}_{p'} : f_{t'} = f_{\beta^{(h)}_{p'}} - h\}
\end{equation*}
by definition of the points $(\alpha^{(h)}_{p'}, 1 \leq p' \leq \Tilde{M}^{(h)} + 1)$. Therefore
\begin{equation}\label{equation branching points are minima}
    Z^{(h)}_{2p'} = \inf(\{f_t: \alpha_{p'}^{(h)} \leq t \leq \alpha_{p'+1}^{(h)}\}) \hspace{0.5cm} \text{ for all   } 0 \leq p' \leq \Tilde{M}^{(h)} - 1
\end{equation}
which also verifies for us that the tree does indeed lie beneath the excursion, as illustrated in Figure \ref{figure snake contour function}.
\par
During the lifetime of a particle the splitting and dying rates do not affect the spatial motion. We can use Lemma \ref{lemma single particle motion with age process} in the next proposition to verify that each particle individually follows the correct spatial motion. In this proposition we have a snake $X$ and we condition on its contour process to be $f$. We consider, for $h>0$, the process $f^{(h)}$ and the tree $[0,\sigma(f^{(h)})]/\sim$ and we use the edges of this tree represented as the intervals $[Y^{(h)}_p, Z^{(h)}_p)$ along with the time changes given by the function $H^{(s)}$ to define a particle system. We use the $\mathbb{R}^d$ component of the snake to give spatial locations to these particles in \eqref{equation ooBBM construction}. All this together gives ooBBM.
\vspace{2cm}
\begin{proposition}\label{proposition single ooBBM with branching}
    Let $x=(b,0) \in E=\mathbb{R}^d \times [0,\infty)$ and let $\varepsilon>0$. Let $h \coloneqq \varepsilon /2$ and let $X=(W,V)$ be a snake with distribution $\mathbb{N}_x(\cdot | T_h < \infty)$. Then the process $(\Tilde{\mathcal{Z}}^{(\varepsilon, \rm d)}_s, \Tilde{\mathcal{Z}}^{(\varepsilon, \rm a)}_s)_{s \geq 0}$ defined by
    \begin{equation}\label{equation ooBBM construction}
        (\Tilde{\mathcal{Z}}^{(\varepsilon, \rm d)}_s, \Tilde{\mathcal{Z}}^{(\varepsilon, \rm a)}_s) \coloneqq \Big(\sum_{\substack{p: A^X_{\gamma^{(h)}_p}(s)=0,\\ H^X_{\gamma^{(h)}_p}(s) \in [Y_p^{(h)}, Z_p^{(h)}),}} \delta_{W_{\gamma^{(h)}_p}(H^X_{\gamma^{(h)}_p}(s))}, \sum_{\substack{p: A^X_{\gamma^{(h)}_p}(s) = 1 \\ H^X_{\gamma^{(h)}_p}(s) \in [Y_p^{(h)}, Z_p^{(h)}),}} \delta_{W_{\gamma^{(h)}_p}(H^X_{\gamma^{(h)}_p}(s))}\Big)
    \end{equation}
    is an $\text{ooBBM}(4/\varepsilon, c, \Tilde{c})$ and with initial state $(0,\delta_b)$, i.e. starting from a single active particle at spatial location $b$.
    \begin{proof}
        We drop the superscripts $X$ for this proof as we only deal with a single snake $X$ throughout. We consider the $h$-erased tree of its contour process. On the event where this tree consists of a single edge $e_1^{(h)}$ the argument is simple.
        \par
        As noted in Remark \ref{remark branching general h case}, this edge has an $\text{Exp}(h/2)$-distributed length. As $\mathbb{N}_x(\cdot) = \int_f \mathbb{P}_{x | f}(\cdot) n(df)$ and hence $\mathbb{N}_x(\cdot | T_h < \infty) = \int_f \mathbb{P}_{x | f}(\cdot) n(df | T_h < \infty)$, we obtain from the third part of Theorem \ref{theorem bivariate brownian snake} that $X_{\gamma_1^{(h)}}$ is distributed as $(\xi(y), 0 \leq y < E^{(2/h)})$, where $\xi \sim P_{(b,0)}^\xi$ and $E^{(2/h)} \sim \text{Exp}(2/h)$ are independent. Hence $(\Tilde{\mathcal{Z}}^{(\varepsilon, \rm d)}_s, \Tilde{\mathcal{Z}}^{(\varepsilon, \rm a)}_s)_{s \geq 0}$ consists of a single particle that starts at position $b$ in the active state and we have the spatial motion and dormancy of the particle by Lemma \ref{lemma single particle motion with age process} have the required dynamics of a single particle in an ooBBM.
        \par
        Note that the points $\gamma_p^{(h)}$ for $1 \leq p \leq M^{(h)}$ are non-random under $\mathbb{P}_{x|f}$. Furthermore the left limits $X_{\gamma_p^{(h)}}(Z_p^{(h)}-)$ exist for $1 \leq p \leq M^{(h)}$ (this is immediately apparent when $p$ is even as $Z_p^{(h)}< f_{\gamma_p^{(h)}}$ and also follows when $p$ is odd where $Z_p^{(h)}= f_{\gamma_p^{(h)}}$). Therefore we can condition on the contour process of $X$ and use the snake property at the points $(\gamma_p^{(h)})_{1 \leq p \leq M^{(h)}}$ up to heights $(Z_p^{(h)})_{1 \leq p \leq M^{(h)}}$. Condition on $X$ to have contour process $f$. For the given genealogical tree $[0,\sigma(f^{(h)})]/\sim$ we show the particle system has the correct spatial motion and dormancy dynamics.
        \par
        We show the joint distribution for each pair of particles. We do this below and we use the snake property of $X$ and the most recent common ancestor of the pair in the genealogical tree. The argument generalises for the joint distribution of all particles by repeating this argument by looking at all the most recent common ancestors for all subsets of particles and repeating the steps below. The spatial motion and dormancy dynamics for each individual particle follows from Lemma \ref{lemma single particle motion with age process}.
        \par
        Let $k$, $j \in \mathbb{N}$ such that $1 \leq k < j \leq M^{(h)}$. Consider the particles denoted by the vertices $\gamma^{(h)}_k$ and $\gamma^{(h)}_j$ in $[0,\sigma(f^{(h)})]/\sim$. The spatial motion and dormancy dynamics of the particle indexed by $\gamma^{(h)}_k$ is given by 
        \begin{equation*}
            (W_{\gamma_k^{(h)}}(H_{\gamma_k^{(h)}}(s)), A_{\gamma_k^{(h)}}(s))_{s'_k \leq s < s''_k}
        \end{equation*}
        where 
        \begin{equation*}
            s'_k \coloneqq \sup\Big(\Big\{s:H_{\gamma_k^{(h)}}(s) = Y^{(h)}_k\Big\}\Big); \hspace{0.5cm}
            s''_k \coloneqq \inf\Big(\Big\{s: 
            H_{\gamma_k^{(h)}}(s) = Z^{(h)}_k\Big\}\Big).
        \end{equation*}
        More generally for $1 \leq p \leq M^{(h)}$ define $s'_p$ as we had defined $s'_k$. We note that the particle $\gamma^{(h)}_k$ has the correct dynamics over its lifetime by Lemma \ref{lemma single particle motion with age process}. We get the same result for the dynamics of the particle at $\gamma^{(h)}_j$. It remains to verify that the initial states and the joint law of the spatial motions of the particles follow the correct distribution. 
        \par
        We first note that $\inf(\{f_t:t \in [\gamma^{(h)}_k, \gamma^{(h)}_j]\}) = Z^{(h)}_{2l}$ for some $k \leq 2l \leq j$, $\gamma^{(h)}_k \leq \gamma^{(h)}_{2l} \leq \gamma^{(h)}_j$ (by \eqref{equation branching points are minima}, note it is possible that $\gamma^{(h)}_{2l}=\gamma^{(h)}_k$ or $\gamma^{(h)}_{2l}=\gamma^{(h)}_j$). We consider the snake at $\gamma^{(h)}_k$ and $\gamma^{(h)}_j$. By the snake property we have that 
        \begin{equation*}
            (X_{\gamma_k^{(h)}}(y))_{0 \leq y < Z^{(h)}_{2l}} = (X_{\gamma_j^{(h)}}(y))_{0 \leq y < Z^{(h)}_{2l}}
        \end{equation*}
        and therefore
        \begin{equation*}
            (W_{\gamma_k^{(h)}}(H_{\gamma_k^{(h)}}(s)), A_{\gamma_k^{(h)}}(s))_{0 \leq s < s''_{2l}} = (W_{\gamma_k^{(h)}}(H_{\gamma_j^{(h)}}(s)), A_{\gamma_j^{(h)}}(s))_{0 \leq s < s''_{2l}}.
        \end{equation*}
        Furthermore we have $(X_{\gamma_k^{(h)}}(y))_{Z^{(h)}_{2l} < y < Z^{(h)}_k}$ is conditionally independent of $(X_{\gamma_j^{(h)}}(y))_{Z^{(h)}_{2l} < y < Z^{(h)}_j}$ given $X_{\gamma^{(h)}_{2l}}$ and that therefore $(W_{\gamma_k^{(h)}}(H_{\gamma_k^{(h)}}(s)), A_{\gamma_k^{(h)}}(s))_{s''_{2l} < s \leq s''_k}$ is conditionally independent of 
        \newline
        $(W_{\gamma_j^{(h)}}(H_{\gamma_j^{(h)}}(s)), A_{\gamma_j^{(h)}}(s))_{s''_{2l} < s \leq s''_j}$.
        \par
        Finally we note that the contour of $X$ has distribution $n(\cdot|T_h<\infty)$ and is such that 
        \newline
        $(Z^{(h)}_p, 1 \leq p \leq M^{(h)}+1)$ has distribution $Q^{(h)}$ and under this distribution we have particles of the tree split and die during their active states with rate $1/h = 2/\varepsilon$ each which are the correct rates for $\text{ooBBM}(4/\varepsilon, c, \Tilde{c})$.
    \end{proof}
\end{proposition}
\begin{figure}
    \centering
    \includegraphics[width=0.8\textwidth]{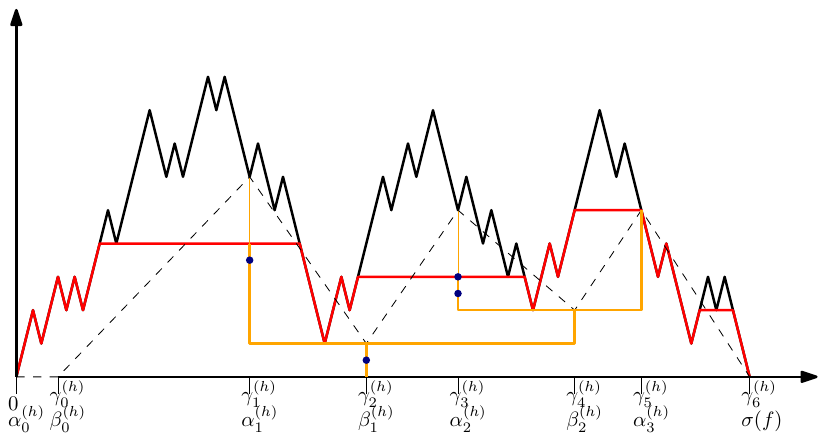}
    \caption{In black bold we have the excursion $f$ for the same excursion as in Figure \ref{figure snake contour function}. In black dashed we have the contour process $f^{(h)}$. In red we have the age process $(H_t(s))_{0 \leq t \leq \sigma(f)}$ and in orange we have the tree $[0,\sigma(f^{(h)})]/\sim$ which is bold up to where it is stopped. The dark blue dots represent jumps for paths of the subordinator component of the snake that determines the age process.}
    \label{figure snake age process function}
\end{figure}
We now consider the Poisson random measure that we call the on/off Brownian snake $\sum_{i \in I} \delta_{((b_i,s_i), f^i, x^i)}$ which we defined in Definition \ref{definition oo snake}. For $i \in I$, and for $s$, $t \geq 0$, let $H^i_t(s) \coloneqq H^{x^i}_t(s)$ and $A^i_t(s) \coloneqq A^{x^i}_t(s)$. Also let $Y^{(h)}_{p,i}$, $Z^{(h)}_{p,i}$, $\gamma^{(h)}_{p,i}$ be analogous to $Y^{(h)}_p$, $Z^{(h)}_p$, $\gamma^{(h)}_p$ for the excursion $f^i$. Let $x^i \coloneqq (w^i, v^i)$ where $w^i$ denotes the Brownian spatial component of the snake and $v^i$ denotes the subordinator component of the snake. Note also for $i \in I$, $s<s_i$ and $0 \leq t \leq \sigma(f^i)$ we have $A^i_t(s)=0$.
We can now give our main result of the section.
\begin{proposition}\label{proposition identification oobmm h erased tree}
    Recall the on/off Brownian snake from Definition \ref{definition oo snake} with index set $I$. Let $\varepsilon > 0$ and let $h \coloneqq \varepsilon / 2$. We define the following measure-valued processes for $i \in I$ and for $s \geq 0$:
    \begin{equation}\label{equation oobbm tree}
        (\mathcal{Z}_{s,i}^{(\varepsilon, \rm d)}, \mathcal{Z}_{s,i}^{(\varepsilon, \rm a)}) \coloneqq \Big(\sum_{\substack{p: A^i_{\gamma^{(h)}_{p, i}}(s)=0,\\ H^i_{\gamma^{(h)}_{p, i}}(s) \in [Y_{p, i}^{(h)}, Z_{p, i}^{(h)})}} \delta_{w^i_{\gamma^{(h)}_{p, i}}(H^i_{\gamma^{(h)}_{p, i}}(s))}, \sum_{\substack{p: A^i_{\gamma^{(h)}_{p, i}}(s) = 1, \\ H^i_{\gamma^{(h)}_{p, i}}(s) \in [Y_{p, i}^{(h)}, Z_{p, i}^{(h)})}} \delta_{w^i_{\gamma^{(h)}_{p, i}}(H^i_{\gamma^{(h)}_{p, i}}(s))}\Big).
    \end{equation}
    Then we have that $((\mathcal{Z}_s^{(\varepsilon, \rm d)}, \mathcal{Z}_s^{(\varepsilon, \rm a)}))_{s \geq 0}$ defined as $(\mathcal{Z}_s^{(\varepsilon, \rm d)}, \mathcal{Z}_s^{(\varepsilon, \rm a)}) \coloneqq (\sum_{i \in I} \mathcal{Z}_{s,i}^{(\varepsilon, \rm d)}, \sum_{i \in I} \mathcal{Z}_{s,i}^{(\varepsilon, \rm a)})$ for $s \geq 0$ is an $\text{ooBBM}(4/\varepsilon, c, \Tilde{c})$ with initial condition given by dormant particles and active particles distributed as Poisson random measures with intensities $\mu_{\rm d}/\varepsilon$, $\mu_{\rm a}/\varepsilon$.
    \par
    Furthermore the process $(\mathcal{Z}_y^{(\varepsilon)})_{y \geq 0}$, where for $y \geq 0$ we define
    \begin{equation*}
        \mathcal{Z}_y^{(\varepsilon)} \coloneqq \sum_{i \in I} \sum_{p: y \in [Y_{p, i}^{(h)}, Z_{p, i}^{(h)})} \delta_{w^i_{\gamma^{(h)}_{p, i}}(y)}, 
    \end{equation*}
    is a $\text{BBM}(4/\varepsilon)$ starting from a Poisson random measure with intensity $(\mu_{\rm d}+\mu_{\rm a})/\varepsilon$.
    \begin{proof}
        We first show that $\mathcal{Z}_0^{(\varepsilon)}$ and $(\mathcal{Z}_0^{(\varepsilon, \rm d)}, \mathcal{Z}_0^{(\varepsilon, \rm a)})$ have the correct distributions. Let $\mu' \coloneqq \mu_{\rm d} + \mu_{\rm a}$. The random measure $\mathcal{Z}_0^{(\varepsilon)}$ is then a Poisson random measure with intensity
        \begin{equation*}
            n(T_h<\infty) \mu'(\cdot) = \mu'(\cdot)/ 2h = \mu'(\cdot)/\varepsilon
        \end{equation*}
        and $\mathcal{Z}_0^{(\varepsilon, \rm d)}$, $\mathcal{Z}_0^{(\varepsilon, \rm a)}
        $ are particles under a Poisson random measures with intensities $\mu_{\rm d}/2h = \mu_{\rm d}/\varepsilon$,
        \newline
        $\mu_{\rm a}/2h = \mu_{\rm a}/\varepsilon$ according to Definition \ref{definition oo snake}. For the ooBBM case we have by Proposition \ref{proposition single ooBBM with branching} for each initial active particle the subsequent behaviour then has the correct distribution and the particles with a dormant initial state switch to an active particle at the correct rate since the $s_i$ are sampled from $\text{Exp}(\Tilde{c})$. Finally, note that particles evolve independently under the Poisson random measure. The case for BBM is simpler.
    \end{proof}
\end{proposition}
Note that $\mathcal{Z}^{(\varepsilon)}$ and $(\mathcal{Z}^{(\varepsilon, \rm d)}, \mathcal{Z}^{(\varepsilon, \rm a)})$ in Proposition \ref{proposition identification oobmm h erased tree} are coupled via the on/off Brownian snake. We now show the presence of another BBM which will be useful to us later.
\begin{corollary}\label{corollary bbm extra killing}
    In the setting of Proposition \ref{proposition identification oobmm h erased tree}, the process $(\Tilde{\mathcal{Z}}_y^{(\varepsilon)})_{y \geq 0}$, defined to be, for $y \geq 0$,
    \begin{equation*}
        \Tilde{\mathcal{Z}}_y^{(\varepsilon)} \coloneqq \sum_{i \in I} \sum_{\substack{p: A^i_{\gamma^{(h)}_{p, i}}(s)=1 \hspace{0.1cm} \forall 0 \leq s \leq y, \\ y \in [Y_{p, i}^{(h)}, Z_{p, i}^{(h)})}} \delta_{w^i_{\gamma^{(h)}_{p, i}}(y)}, 
    \end{equation*}
    is a $\text{BBM}(2/\varepsilon, 2/\varepsilon + c)$ starting from a Poisson random measure with intensity $\mu_{\rm a}/\varepsilon$.
    \begin{proof}
        This follows in the same way as for the $\text{BBM}(4/\varepsilon)$ (which is the same as $\text{BBM}(2/\varepsilon, 2/\varepsilon)$) given in Proposition \ref{proposition identification oobmm h erased tree}. The difference here is that once an active particle moves to the dormant state it is no longer included in the sum, and this is what increases the death rate by $c$.
    \end{proof}
\end{corollary}
\section{Local time measure for ooSBM}\label{section snake local time measure}
The aim of this section is to produce a representation for $\text{ooSBM}(4, c, \Tilde{c})$ with the on/off Brownian snake 
\begin{equation*}
    \sum_{i \in I} \delta_{((b_i,s_i), f^i, x^i)}
\end{equation*}
defined in Definition \ref{definition oo snake}. For $i \in I$, we write $x^i=(w^i,v^i)$ for the Brownian component $w^i$ and the subordinator component $v^i$ of the snake $x^i$. We also write $\hat{w}^i$ for $w^i(\zeta(x^i)-)$ (recall Remark \ref{remark head of the snake}) and we also write $\hat{v}^i$ for $v^i(\zeta(x^i)-)$ when this exists. Also we set $y_i \coloneqq (b_i,s_i)$. We define this Poisson random measure on a probability space $(\Omega, \mathcal{F}, \mathbb{P}_{(\mu_{\rm d}, \mu_{\rm a})})$. We begin with the corresponding representation for $\text{SBM}(4)$, see e.g. \cite[Chapter IV]{le1999spatial}. In the following, for a measure $\mu$ and a function $\phi$, we write $\langle \mu, \phi \rangle$ for $\int \phi d\mu$. For the excursions $f^i$, $i \in I$, $t>0$ we define the local times $L^i_t(\cdot)$ as
\begin{equation}\label{equation normalised local time}
    L^i_y(t) \coloneqq \lim_{\varepsilon \rightarrow 0} \frac{1}{\varepsilon} \int_{r=0}^t \mathbbm{1}_{(y, y + \varepsilon)}(f^i_r) dr.
\end{equation}
Following the convention in Le Gall \cite{le1999spatial}, we write $dL^i_y(t)$ to denote the Lebesgue-Stieltjes measure as defined from the continuous increasing function $L^i_y$. Formally, $dL^i_y(t)$ is the Borel measure $\phi(dt)$ for which we have for any $0\leq r \leq r'$ that $\phi((r,r'))=L^i_y(r)-L^i_y(r')$. We also do this for other local occupation time measures in this section.
\begin{theorem}[The Brownian snake for super-Brownian motion]\label{theorem sbm snake}
    There exists a measure-valued process $(\mathcal{X}_y)_{y \geq 0}$ such that for any nonnegative bounded Borel measure function $g$ on $\mathbb{R}^d$ and any $y>0$ we have
    \begin{equation}\label{equation sbm integral rep}
        \langle \mathcal{X}_y, g \rangle = \sum_{i \in I} \int_{r=0}^{\sigma_i} dL^i_y(r) g(\hat{w}^i_r) \hspace{0.5cm} \mathbb{P}_{(\mu_{\rm d}, \mu_{\rm a})}\text{-a.s.}
    \end{equation}
    The process $(\mathcal{X}_y)_{y \geq 0}$ is an $\text{SBM}(4)$ started from $\mu \coloneqq \mu_{\rm d} + \mu_{\rm a}$.
    \par
    Furthermore we have that for any $y > 0$ that
    \begin{equation}\label{equation sbm supp snake rep}
        \text{supp}(\mathcal{X}_y) = \overline{\bigcup_{i \in I}\{\hat{w}^i_r: r \in [0,\sigma(f^i)], f^i_r = y\}} \hspace{0.5cm} \mathbb{P}_{(\mu_{\rm d}, \mu_{\rm a})}\text{-a.s.}
    \end{equation}
    This follows from \cite[Equation IV.10]{le1999spatial} which states the support for each component of the superprocess $\mathcal{X}_y$ constructed from each of the snakes $x^i$, $i \in I$. The closure of the union of the supports for these components is stated in \eqref{equation sbm supp snake rep}.
\end{theorem}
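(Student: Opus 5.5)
The plan is to reduce the statement to Le Gall's single-excursion representation of super-Brownian motion and then superpose over the Poisson random measure. I will use only the Brownian component $w^i$ of each snake $x^i$. By construction in Definition \ref{definition oo snake}, the pairs $(b_i, f^i)$ form a Poisson random measure with intensity $\mu(db)\,n(df)$, where $\mu = \mu_{\rm d}+\mu_{\rm a}$. This holds because the two marking rules pick $b_i$ from $\tilde\mu_{\rm d}$ or $\tilde\mu_{\rm a}$ with weights $\norm{\mu_{\rm d}}$ and $\norm{\mu_{\rm a}}$, by the marking theorem.

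Given $(b_i, f^i)$, the conditional law of $x^i$ is $\mathbb{P}_{(b_i,s_i)|f^i}$. Since $B$ and $S$ are independent in $\xi=(B,S)$, the kernels $Q_{a,b}$ factorise. The $\mathbb{R}^d$-component $w^i$ is therefore, conditionally on $f^i$, exactly Le Gall's Brownian snake driven by $f^i$ started at $b_i$, and the initial value $s_i$ of the subordinator plays no role. By Remark \ref{remark head of the snake}, the head $\hat w^i_r$ is well defined and continuous in $r$, almost surely.

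First, for a single excursion, I will use \cite[Chapter IV]{le1999spatial}. Under $\mathbb{N}_b$, define $\langle \mathcal{X}^i_y, g\rangle := \int_0^{\sigma_i} dL^i_y(r)\, g(\hat w^i_r)$, with the normalisation \eqref{equation normalised local time}. The resulting process $y\mapsto \mathcal{X}^i_y$ has Laplace functional $\mathbb{N}_b(1-e^{-\langle \mathcal{X}^i_y, g\rangle}) = u_y(b)$. Here $u$ solves $\partial_y u = \tfrac12\Delta u - 2u^2$, which is the branching mechanism of SBM(4) in the normalisation $\gamma=4$, consistent with Proposition \ref{proposition identification oobmm h erased tree} at $\varepsilon=1$.

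Second, I will define $\mathcal{X}_y := \sum_{i\in I} \mathcal{X}^i_y$ for $y>0$ and $\mathcal{X}_0 := \mu$. Only finitely many excursions reach height $y$, since $n(T_y<\infty)=1/2y<\infty$, so the sum is finite and \eqref{equation sbm integral rep} holds by definition. The exponential formula for Poisson measures gives $\mathbb{E}(e^{-\langle \mathcal{X}_y, g\rangle}) = \exp(-\langle \mu, u_y\rangle)$, which identifies the one-dimensional marginals. The finite-dimensional distributions and the Markov property follow exactly as in \cite[Theorem IV.4]{le1999spatial}, through the Markov property of the snake at the first hitting time of level $y$ (Proposition \ref{proposition bessel and brownian snakes}). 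Alternatively, I can obtain them from Proposition \ref{proposition identification oobmm h erased tree}: the $h$-erased BBM$(4/\varepsilon)$ rescaled by $\varepsilon$ converges a.s. to $\mathcal{X}_y$ by the downcrossing approximation of local time, and Proposition \ref{proposition SBM existence} then identifies the limit. Finally, I will take the continuous modification to obtain an SBM(4) started from $\mu$.

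Third, for the support, \cite[Equation IV.10]{le1999spatial} gives $\text{supp}(\mathcal{X}^i_y)=\overline{\{\hat w^i_r : f^i_r=y\}}$ almost surely for each $i$. Only finitely many $i$ contribute, and the support of a finite sum of measures is the union of the supports. A finite union of closures equals the closure of the union, which yields \eqref{equation sbm supp snake rep}.

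I expect the main obstacle to be the countably many $i$: Le Gall's statement is under $\mathbb{N}_b$ for a fixed $b$, and it must be transferred to hold simultaneously for all $i$. I would handle this by Campbell's formula. The expected number of $i$ with $T_y(f^i)<\infty$ for which the support identity fails equals $\int \mu(db)\,\mathbb{N}_b(\text{failure},\, T_y<\infty)$, which is $0$.
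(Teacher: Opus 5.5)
Your proposal is correct and follows the paper's route. The paper cites \cite[Chapter IV]{le1999spatial} for the single-excursion construction and \cite[Equation IV.10]{le1999spatial} for the support of each component, then takes the closure of the union. You also correctly fill in what the paper leaves implicit: the marking theorem giving intensity $\mu(db)\,n(df)$, the irrelevance of the subordinator marks $s_i$ by independence of $B$ and $S$, that only finitely many excursions reach level $y$, and Campbell's formula to make the $\mathbb{N}_b$-a.e.\ support identity hold for all $i \in I$ at once.

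One remark is loose. Your parenthetical attributes the finite-dimensional distributions to the Markov property of the snake at $T_y$ (Proposition \ref{proposition bessel and brownian snakes}), but that alone would not give them; Le Gall's argument also relies on a Poisson decomposition of the subtrees branching off the snake path. Since you defer to Le Gall's theorem, or alternatively to the $h$-erased BBM limit together with Proposition \ref{proposition SBM existence}, this does not affect the proof.
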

We now introduce the necessary theory for our construction of ooSBM from the on/off Brownian snake. We use the notion of downcrossings for local times, see e.g. M\"orters and Peres \cite{morters2010brownian}. For $R$ a reflecting Brownian motion and for $0 \leq a < b$ we can define $\tau_0 \coloneqq 0$, and for $n \geq 1$
\begin{equation*}
    \sigma_n \coloneqq \inf(\{t > \tau_{n-1}: R(t) = b \}), \hspace{0.5cm} \tau_n \coloneqq \inf(\{t>\sigma_n : R(t) = a \}),
\end{equation*}
and $R^{(n)}:[0,\tau_n - \sigma_n] \rightarrow [0,\infty)$ given by $R^{(n)}(t') \coloneqq R(\sigma_n+t')$ is called the $n^{\text{th}}$ downcrossing of $[a,b]$. We also refer to the points $(\tau_n, n \in \mathbb{N})$ as the downcrossings of $[a,b]$. We also refer to these as $(b-a)$-downcrossings to level $a$. We also define $\Tilde{R}^{(n)}:[0, \sigma_n - \tau_{n-1}] \rightarrow [0,\infty)$ by $\Tilde{R}^{(n)}(t') \coloneqq R(\tau_{n-1} + t')$ as the $n^{\text{th}}$ upcrossing of $[a,b]$. In this section we are interested in the downcrossings of $[0,h]$, for $h>0$, of the residual processes $(\text{Res}^X_t(s))_{t \geq 0}$ for $s\geq 0$, for snakes $X$. 
\par
We work with the Poisson random measure that we call the on/off Brownian snake in Definition \ref{definition oo snake} with index set $I$.
\par
For $i \in I$ we define $\text{Res}^i_t(s) \coloneqq \text{Res}^{x^i}_t(s) = f^i_t - H^i_t(s)$. For $h>0$, consider the finitely many stopping times $(S^{h, \rm a}_{s,n,i})_{1 \leq n \leq M^{h, \rm a}_i}$, $(S^{h, \rm d}_{s,n,i})_{1 \leq n \leq M^{h, \rm d}_i}$, $(T^{h, \rm a}_{s,n,i})_{0 \leq n \leq M^{h, \rm a}_i}$ and $(T^{h, \rm d}_{s,n,i})_{0 \leq n \leq M^{h, \rm d}_i}$ for the snake given by $T^{h, \rm a}_{s,0,i}=T^{h, \rm d}_{s,0,i}=0$ and for finitely many $n$ we have
\begin{equation*}
    S^{h, \rm a}_{s,n,i} \coloneqq \inf\{t > T^{h, \rm a}_{s, n-1, i} | \text{Res}^i_t(s) > h, A^i_t(s) = 1 \}; \hspace{0.25cm} T^{h, \rm a}_{s,n,i} \coloneqq \inf\{t > S^{h, \rm a}_{s,n,i} | \text{Res}^i_t(s) = 0\},
\end{equation*}
\begin{equation*}
    S^{h, \rm d}_{s,n,i} \coloneqq \inf\{t > T^{h, \rm d}_{s, n-1, i} | \text{Res}^i_t(s) > h, A^i_t(s) = 0 \}; \hspace{0.25cm} T^{h, \rm d}_{s,n,i} \coloneqq \inf\{t > S^{h, \rm d}_{s,n,i} | \text{Res}^i_t(s) = 0\},
\end{equation*}
where $M^{h, \rm d}_i \in \mathbb{N}$ (respectively $M^{h, \rm a}_i$) is the largest number $n$ such that $S^{h, \rm d}_{s,n,i}$ (respectively $S^{h, \rm a}_{s,n,i}$) is finite. If such an $n$ does not exist, we set $M_i^{h,d}$ (respectively $M_i^{h,a}$) to zero. Here $f^i$ plays the role of $R$ in Definition \ref{definition age process}. We also define the finitely many stopping times $(S^h_{s,n,i})_{1 \leq n \leq M^h_i}$ and $(T^h_{s,n,i})_{0 \leq n \leq M^h_i}$ for the snake given by $T^h_{s,0,i}=0$ and for finitely many $n$ we have
\begin{equation*}
    S^h_{s,n,i} \coloneqq \inf\{t > T^h_{s, n-1, i} | \text{Res}^i_t(s) > h\}; \hspace{0.25cm} T^h_{s,n,i} \coloneqq \inf\{t > S^h_{s,n,i} | \text{Res}^i_t(s) = 0\},
\end{equation*}
where $M^h_i \in \mathbb{N}$ and we note that $M^h_i = M^{h, \rm d}_i + M^{h, \rm a}_i$. We have the following identification of the of the ooBBM defined in the previous section. We show these stopping times in Figure \ref{figure snake downcrossings function}. 
\begin{lemma}\label{lemma alternative characterisation ooBBM from snake}
    Recall the Poisson random measure from Definition \ref{definition oo snake} and its index set $I$. Let $i \in I$, let $\varepsilon>0$ and let $s>0$. Let $h = \varepsilon/2$ and recall $(\mathcal{Z}^{(\varepsilon, \rm d)}_{s,i}, \mathcal{Z}^{(\varepsilon, \rm a)}_{s,i})$ from Proposition \ref{proposition identification oobmm h erased tree}. Then we have
    \begin{equation}\label{equation oobbm downcrossing}
        (\mathcal{Z}^{(\varepsilon, \rm d)}_{s,i}, \mathcal{Z}^{(\varepsilon, \rm a)}_{s,i}) = \Big(\sum_{n = 1}^{M^{h, \rm d}_i} \delta_{w^i_{S^{h, \rm d}_{s,n,i}}(H^i_{S^{h, \rm d}_{s,n,i}}(s))}, \sum_{n = 1}^{M^{h, \rm a}_i} \delta_{ w^i_{S^{h, \rm a}_{s,n,i}}(H^i_{S^{h, \rm a}_{s,n,i}}(s))}\Big).
    \end{equation}
    \begin{proof}
        We work on the $\mathbb{P}_{(\mu_{\rm d}, \mu_{\rm a})}$-almost sure event that, for all $i \in I$ and for all $p \in \mathbb{N}$ we have $H^i_{\gamma^{(h)}_{p, i}}(s) \neq Z^{(h)}_{p, i}$ which is almost sure as the $Z^{(h)}_{p, i}$ levels are continuously distributed. We work on this event as our proof relies on counting arguments for the atoms of the sums in \eqref{equation oobbm tree} and in \eqref{equation oobbm downcrossing} and the argument fails at the exceptional times where $H^i_{\gamma^{(h)}_{p, i}}(s) = Z^{(h)}_{p, i}$. As our claim is only for fixed $s$ this does not affect our results. We describe the argument for the dormant particles $\mathcal{Z}^{(\varepsilon, \rm d)}_{s,i}$ and leave the argument for the active particles as it is identical. The presence of dormant and active states adds a level of complexity to the arguments below, but the reader can assume in the first instance that all particles in the tree are dormant and follow the argument. The inclusion of active particles simply means that some points in the sequence $\gamma^{(h)}_{p,i}$, $Z^{(h)}_{p, i}$ are omitted from the arguments for the dormant case and vice versa.
        \par
        First consider the formulation of $\mathcal{Z}^{(\varepsilon, \rm d)}_{s,i}$ given in \eqref{equation oobbm tree} and consider an atom in this sum. For each atom $\delta_{w^i_{\gamma^{(h)}_{p, i}}(H^i_{\gamma^{(h)}_{p, i}}(s))}$ we have $H^i_{\gamma^{(h)}_{p,i}}(s) \in [Y_{p, i}^{(h)}, Z_{p, i}^{(h)})$ and $A^i_{\gamma^{(h)}_{p, i}}(s)=0$. We consider two cases for the index $p$ for $\gamma^{(h)}_{p, i}$: the first is that $p=2p'+1$ for some $p' \in \mathbb{N}$ and therefore $\gamma^{(h)}_{p,i} = \alpha^{(h)}_{p',i}$ and the second is that $p=2p'$ for some $p' \in \mathbb{N}$ and therefore $\gamma^{(h)}_{p,i} = \beta^{(h)}_{p',i}$. In the first case where $\gamma^{(h)}_{p,i} = \alpha^{(h)}_{p',i}$ we claim that there is a $n$ such that $T^{h, \rm d}_{s,n,i} \in [\alpha^{(h)}_{p',i}, \beta^{(h)}_{p',i})$. This can be seen from how the points $\alpha^{(h)}_{q,i}$, $\beta^{(h)}_{q,i}$, $q \in \mathbb{N}$, are defined in \eqref{equation beta} and \eqref{equation alpha}. Specifically, over the interval $[\alpha^{(h)}_{p',i}, \beta^{(h)}_{p',i})$ the value $f^i_t$ decreases to $H^i_{\gamma^{(h)}_{p, i}}(s)$ and the value of $H^i_t(s)$ is constant until $f^i_t$ hits $H^i_{\gamma^{(h)}_{p, i}}(s)$. Furthermore, on the interval $[\beta^{(h)}_{p'-1,i}, \alpha^{(h)}_{p',i})$ the supremum is $f^i_{\alpha^{(h)}_{p',i}}+h$ and there are no $h$-downcrossings of $f^i_t$ to any level. Therefore the first time $t$ over the interval $[\alpha^{(h)}_{p',i}, \beta^{(h)}_{p',i})$ that $f^i_t$ hits $H^i_{\gamma^{(h)}_{p, i}}(s)$ is a $h$-downcrossing time for the process $\text{Res}^i_t(s)$, given as $T^{h, \rm d}_{s,n,i}$ for some $n \in \mathbb{N}$, and it is the only $h$-downcrossing time for $\text{Res}^i_t(s)$ in the interval $[\alpha^{(h)}_{p',i}, \beta^{(h)}_{p',i})$. It is therefore clear that the $T^{h, \rm d}_{s,n,i}$ considered in this case are unique as they are in disjoint intervals $[\alpha^{(h)}_{p',i}, \beta^{(h)}_{p',i})$. Furthermore, we have by the snake property at points $S^{h, \rm d}_{s,n,i} - T_h(f^i)$, $\gamma^{(h)}_{p, i} - T_h(f^i)$, $T^{h, \rm d}_{s,n,i} - T_h(f^i)$ for the strong Markov process $(x^i_{T_h(f^i)+ t}, t \geq 0)$ (see Proposition \ref{proposition bessel and brownian snakes}) the equality between $w^i_{\gamma^{(h)}_{p, i}}(H^i_{\gamma^{(h)}_{p, i}}(s))$ and $w^i_{S^{h, \rm d}_{s,n,i}}(H^i_{S^{h, \rm d}_{s,n,i}}(s))$ and also that $A^i_{\gamma^{(h)}_{p, i}}(H^i_{\gamma^{(h)}_{p, i}}(s)) = A^i_{S^{h, \rm d}_{s,n,i}}(H^i_{S^{h, \rm d}_{s,n,i}}(s)) = 0$. Therefore the atoms $\delta_{w^i_{\gamma^{(h)}_{p, i}}(H^i_{\gamma^{(h)}_{p, i}}(s))}$ and $\delta_{w^i_{S^{h, \rm d}_{s,n,i}}(H^i_{S^{h, \rm d}_{s,n,i}}(s))}$ are present in the sums in \eqref{equation oobbm tree} and \eqref{equation oobbm downcrossing} and they are equal as required.
        \par
        Now we consider the second case where $\gamma^{(h)}_{p,i} = \beta^{(h)}_{p',i}$. In this case let $p'' = \inf\{q > p': Z^{(h)}_{2q, i} \leq Z^{(h)}_{2p', i}\}$. In this case we have that $f^i_t$ hits $H^i_{\gamma^{(h)}_{p, i}}(s)$ for $t$ in the interval $[\alpha^{(h)}_{p'',i}, \beta^{(h)}_{p'',i})$ and we let $T$ be the first such hitting time. We have that $f^i_t > H^i_{\gamma^{(h)}_{p, i}}(s)$ on the interval $[\gamma^{(h)}_{p, i}, T)$ and so we have equality of $H^i_{\gamma^{(h)}_{p, i}}(s)$ and $H^i_T(s)$. We can now see that, like in the previous case, $T$ is a $h$-downcrossing time of $\text{Res}^i_t(s)$ and therefore $T=T^{h, \rm d}_{s,n,i}$ for some $n \in \mathbb{N}$. We also have, as in the previous case, equality between $w^i_{\gamma^{(h)}_{p, i}}(H^i_{\gamma^{(h)}_{p, i}}(s))$ and $w^i_{S^{h, \rm d}_{s,n,i}}(H^i_{S^{h, \rm d}_{s,n,i}}(s))$, $A^i_{\gamma^{(h)}_{p, i}}(H^i_{\gamma^{(h)}_{p, i}}(s)) = A^i_{S^{h, \rm d}_{s,n,i}}(H^i_{S^{h, \rm d}_{s,n,i}}(s)) = 0$, and that the atoms $\delta_{w^i_{\gamma^{(h)}_{p, i}}(H^i_{\gamma^{(h)}_{p, i}}(s))}$ and $\delta_{w^i_{S^{h, \rm d}_{s,n,i}}(H^i_{S^{h, \rm d}_{s,n,i}}(s))}$ are present in the sums in \eqref{equation oobbm tree} and \eqref{equation oobbm downcrossing}. The details of these arguments are similar to the previous case and rely on the definitions of $\alpha^{(h)}_{q,i}$, $\beta^{(h)}_{q,i}$, $q \in \mathbb{N}$ in \eqref{equation beta} and \eqref{equation alpha}. We also note here that $H^i_{\alpha^{(h)}_{p'',i}}(s) \notin [Y_{2p''-1, i}^{(h)}, Z_{2p''-1, i}^{(h)})$ as $H^i_{\alpha^{(h)}_{p'',i}}(s) \leq Y_{2p''-1, i}^{(h)}$ and so $T^{h, \rm d}_{s,n,i}$ is not double-counted from any atoms in the first case. This is because by definition of $p''$ we either have $Z_{2p''-2, i}^{(h)} \geq Z^{(h)}_{2p, i}$ or $p''=p'+1$ and either way $H^i_{\alpha^{(h)}_{p'',i}}(s) \geq Y_{2p''-1, i}^{(h)}$ leads to a contradiction. To see this, suppose that there is $q'<p'$ with $H^i_{\gamma^{(h)}_{2q',i}}(s) \in [Y_{2q', i}^{(h)}, Z_{2q', i}^{(h)})$ and $A^i_{\gamma^{(h)}_{2q', i}}(s)=0$ as usual and also with $\inf\{q > q': Z^{(h)}_{2q, i} \leq Z^{(h)}_{2q', i}\} = \inf\{q > p': Z^{(h)}_{2q, i} \leq Z^{(h)}_{2p', i}\} = p''$. Then we must have that $Z^{(h)}_{2q',i} < Z^{(h)}_{2p', i}$. In fact we would have that $(Y^{(h)}_{2q', i}, Z^{(h)}_{2q', i}]$ and $(Y^{(h)}_{2p', i}, Z^{(h)}_{2p', i}]$ are disjoint intervals but that $H^i_t(s)$ would be constant over $[\beta^{(h)}_{q', i}, \beta^{(h)}_{p', i})$ and this would be a contradiction as it would mean that $H^i_{\gamma^{(h)}_{2q',i}}(s) = H^i_{\gamma^{(h)}_{2p',i}}(s)$ is in both the disjoint sets $(Y^{(h)}_{2q', i}, Z^{(h)}_{2q', i}]$ and $(Y^{(h)}_{2p', i}, Z^{(h)}_{2p', i}]$. Therefore there cannot exist such a $q'$ and we do not have any double-counting in the second case.
        \par
        To conclude the proof we are required to show that for each $n$ we have that $T^{h, \rm d}_{s,n,i}$ is counted by either the first or the second case given above. Fix $n$ and consider $T^{h, \rm d}_{s,n,i}$. There exists a $p'$ such that $T^{h, \rm d}_{s,n,i} \in [\alpha^{(h)}_{p',i}, \beta^{(h)}_{p',i})$ and we note here that $\alpha^{(h)}_{p',i} = \gamma^{(h)}_{2p'-1,i}$ and $\beta^{(h)}_{p',i} = \gamma^{(h)}_{2p',i}$. We first claim that in this setting we have $H^i_{T^{h, \rm d}_{s,n,i}}(s) \in [Z^{(h)}_{2p', i}, Z^{(h)}_{2p'-1, i}]$. This can be see again from the definitions of $\alpha^{(h)}_{q,i}$, $\beta^{(h)}_{q,i}$, $q \in \mathbb{N}$ that for $y>0$ and $p \in \mathbb{N}$ there is a $h$-downcrossing to $f^i_t$ to level $y$ in the interval $[\alpha^{(h)}_{p,i}, \beta^{(h)}_{p,i})$ if and only if $y \in [Z^{(h)}_{2p', i}, Z^{(h)}_{2p'-1, i}]$. If $H^i_{T^{h, \rm d}_{s,n,i}}(s) \in [Y^{(h)}_{2p'-1, i}, Z^{(h)}_{2p'-1, i}] \subseteq [Z^{(h)}_{2p' ,i}, Z^{(h)}_{2p'-1 , i}]$ then we are in the setting of the first case as an atom corresponding to $\alpha^{(h)}_{p',i} = \gamma^{(h)}_{2p'-1,i}$ will be included in the sum in \eqref{equation oobbm tree}. Otherwise we have for $p'' = \sup\{q<p':Y^{(h)}_{q, i} < H^i_{T^{h, \rm d}_{s,n,i}}(s)\}$ that $H^i_{T^{h, \rm d}_{s,n,i}}(s) \in [Y^{(h)}_{2p'', i}, Z^{(h)}_{2p'', i}]$ and that an atom corresponding $\beta^{(h)}_{p'',i} = \gamma^{(h)}_{2p'', i}$ is included in the sum in \eqref{equation oobbm tree} and we are in the second case described above. 
    \end{proof}
\end{lemma}
In Proposition \ref{proposition snake reflecting brownian motion} we give results about the residual process. This is an instance of a more general result from \cite{BertoinLeGallLeJan1997} that we state in Lemma \ref{lemma snake exit open set general}.
\begin{lemma}[Residual lifetime process, Section 4 of \cite{BertoinLeGallLeJan1997}]\label{lemma snake exit open set general}
    Let $E$ be a complete and separable metric space, let $x \in E$ and let $w \in \mathcal{W}_x$. Consider a Brownian snake $X$ under $\mathbb{P}_w$ (respectively $\mathbb{N}_x$) as defined in Theorem \ref{theorem bivariate brownian snake}. Let $D$ be an open subset of $E$, let $x \in D$ and let $F = E \backslash D$. For $v \in \mathcal{W}_x$, set $\tau(v) \coloneqq \inf(\{t \geq 0, v(t) \notin D \})$. Assume that $P^\xi_x(\tau<\infty)>0$. Then define
    \begin{equation*}
        U_t \coloneqq (\zeta(X_t) - \tau(X_t))^+ \hspace{0.5cm} \eta_r \coloneqq \inf\Big(\Big\{t, \int_0^t \mathbbm{1}_{\{\tau(X_u)<\zeta(X_u)\}} du > r \Big\}\Big),
    \end{equation*}
    and then the process $(U_{\eta_r}, r \geq 0)$ is a reflecting Brownian motion $\mathbb{P}_w$-a.s., (respectively is a stopped reflecting Brownian motion $\mathbb{N}_x$-a.e.). Furthermore, it follows that there exists a local time at level $0$ for $(U_t)_{t \geq 0}$ which is the limit
    \begin{equation*}
        \ell(t) \coloneqq \lim_{\varepsilon' \rightarrow 0}\frac{1}{\varepsilon'} \int_{u=0}^t \mathbbm{1}_{\{\tau(X_u) < \zeta(X_u) < \tau(X_u) + \varepsilon'\}} du
    \end{equation*}
    which exists for every $t \geq 0$ $\mathbb{P}_w$-a.s. (respectively $\mathbb{N}_x$-a.e.) and defines a continuous increasing process. As before, let $d\ell(t)$ denote the Lebesgue-Steiltjes measure associated with $\ell(t)$. Under $\mathbb{N}_x$, for a measurable function $\Phi$ we have
    \begin{equation}\label{equation local time general}
        \mathbb{N}_x\Big(\int_{t=0}^\sigma \Phi(X_t)d\ell(t)\Big) = E_x^\xi(\Phi(\xi) \mathbbm{1}_{\tau(\xi) < \infty})
    \end{equation}
    and in particular we have
    \begin{equation}\label{equation local time contribution}
        \mathbb{N}_x(\ell(\infty)) = P^\xi_x(\tau(\xi)<\infty).
    \end{equation}
    \begin{proof}
        This result is given in \cite{BertoinLeGallLeJan1997} at the start of Section 4 prior to the introduction of exit measures. In particular, \eqref{equation local time general} is given in \cite[Equation (35)]{BertoinLeGallLeJan1997} and \eqref{equation local time contribution} is a particular instance of this result.
    \end{proof}
\end{lemma}
We note that, in Lemma \ref{lemma snake exit open set general}, $\ell(\cdot)$ can be constructed from the local time for $(U_{\eta_r}, r \geq 0)$ with time-change given by the inverse of $\eta$. We refer to $\ell(\cdot)$ as the \emph{exit local time} from the set $D$. Exit local times are used to construct \emph{exit measures} which are the integral of the heads of the snakes against the exit local times. In this section, we define exit local times that we use to construct exit measures which have the finite-dimensional distributions of ooSBM.
\begin{figure}
    \centering
    \includegraphics[width=0.8\textwidth]{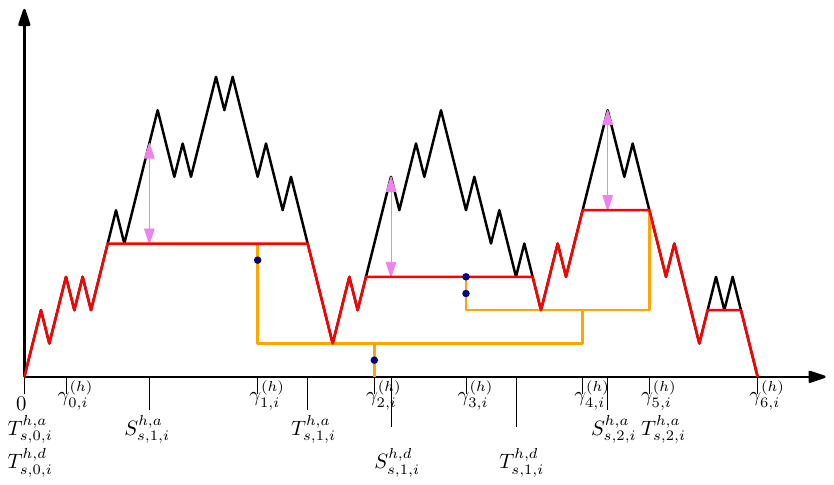}
    \caption{For $i \in I$ we show the downcrossing stopping times $(S^{h, \rm d}_{s,n,i}, T^{h, \rm d}_{s,n,i})_{1 \leq n \leq M^{h, \rm d}_i}$, $(S^{h, \rm a}_{s,n,i}, T^{h, \rm a}_{s,n,i})_{1 \leq n \leq M^{h, \rm a}_i}$. In black bold we have $f^i$ which is taken to be the same excursion $f$ as in Figures \ref{figure snake contour function} and \ref{figure snake age process function} for convenience. The tree given in orange is the tree for the $h$-erased contour process of $f^i$ and this tree is stopped at the age process $(H^i_t(s))_{0 \leq t \leq \sigma(f)}$ given in red. The dark blue dots represent jumps for paths of the subordinator components $v^i$ of the snake. The purple arrowed lines here indicate the differences of $h$ that determine the downcrossing stopping times.}
    \label{figure snake downcrossings function}
\end{figure}
\begin{proposition}\label{proposition snake reflecting brownian motion}
Fix $s > 0$ and let $x=(b,s') \in E$ with $s'<s$. We define the time changes
\begin{equation*}
    \eta(s,u,i) \coloneqq \inf \Big\{ t' \geq 0: \int_{r=0}^{t'} \mathbbm{1}_{\{\text{Res}^i_r(s) > 0\}} dr > u\Big\};
\end{equation*}
\begin{equation*}
    \eta(s,u,i, \rm a) \coloneqq \it \inf \Big\{ t' \geq 0: \int_{r=0}^{t'} \mathbbm{1}_{\{\text{Res}^i_r(s) > 0, A^i_r(s) = 1\}} dr > u\Big\};
\end{equation*}    
\begin{equation*}
    \eta(s,u,i, \rm d) \coloneqq \it \inf \Big\{ t' \geq 0: \int_{r=0}^{t'} \mathbbm{1}_{\{\text{Res}^i_r(s) > 0, A^i_r(s) = 0\}} dr > u\Big\}.
\end{equation*}
Then the maps $u \mapsto \text{Res}^i_{\eta(s,u,i)}(s)$, $u \mapsto \text{Res}^i_{\eta(s,u,i, \rm a)}(s)$ and $u \mapsto \text{Res}^i_{\eta(s,u,i, \rm d)}(s)$ are stopped reflecting Brownian motions $\mathbb{N}_x$-a.e. Furthermore we have that the local times of $(\text{Res}^i_{\eta(s,u,i)}(s))_{u \geq 0}$, $(\text{Res}^i_{\eta(s,u,i, \rm a)}(s))_{u \geq 0}$ and $(\text{Res}^i_{\eta(s,u,i, \rm d)}(s))_{u \geq 0}$ at level $0$, denoted by $\hat{\ell}_{s,i}$, $\hat{\ell}^{\rm a}_{s,i}$, $\hat{\ell}^{\rm d}_{s,i}$, are such that 
    \begin{equation*}
        \mathbb{N}_x\Big(\hat{\ell}_{s,i}(\infty)\Big) = \mathbb{N}_x\Big(\hat{\ell}^{\rm a}_{s,i}(\infty)\Big) +  \mathbb{N}_x\Big(\hat{\ell}^{\rm d}_{s,i}(\infty)\Big) = 1.
    \end{equation*}
    \begin{proof}
        Consider the open set $D=\mathbb{R}^d \times [0,s)$ in the setting of Lemma \ref{lemma snake exit open set general}. In this setting the process $(U_{\eta_r}, r \geq 0)$ is exactly the process given by $(\text{Res}^i_{\eta(s,u,i)}(s))_{u \geq 0}$ as
        \begin{equation*}
            \tau^s(X_t) = \inf \{r \geq 0 | (B_t(r), S_t(r)) \notin \mathbb{R}^d \times [0,s)\} = \inf \{r \geq 0 |  S_t(r) \geq s \} = H_t(s),
        \end{equation*}
        and because the Residual process is the difference between the excursion $\zeta^i_t$ and the age process $H^i_t(s)$.
        \par
        Now instead consider the set $D=\mathbb{R}^d \times [0,\infty) \backslash \{s\}$ in the setting of the previous lemma. In this instance the process $(U_{\eta_r}, r \geq 0)$ is exactly the process given by $(\text{Res}^i_{\eta(s,u,i, \rm a)}(s))_{u \geq 0}$  
        \begin{equation*}
            \tau^s(X_t) = \inf \{r \geq 0 | (B_t(r), S_t(r)) \notin \mathbb{R}^d \times [0,\infty) \backslash \{s\} \} = \inf \{r \geq 0 | S_t(r) = s\}, 
        \end{equation*}
        i.e. if the subordinator $S_t(\cdot)$ can take the value $s$, then the particle is in the active state.
        \par
        The final case follows as 
        \begin{equation*}
            \text{Res}^i_t(s) = \text{Res}^i_t(s)\mathbbm{1}_{A_t(s) = 0} + \text{Res}^i_t(s)\mathbbm{1}_{A_t(s) = 1}
        \end{equation*}
        and so the process $(\text{Res}^i_{\eta(s,u,i, \rm d)}(s))_{u \geq 0}$ is a reflecting Brownian motion as it is defined by first taking the process $(\text{Res}^i_{\eta(s,u,i)}(s))_{u \geq 0}$ and removing excursions that correspond to $(\text{Res}^i_{\eta(s,u,i, \rm a)}(s))_{u \geq 0}$. For the final part we use \eqref{equation local time contribution} and note that
        \begin{equation*}
            \mathbb{N}_x\Big(\hat{\ell}_{s,i}(\infty)\Big) = P^\xi_x\Big(\inf(\{y: S(y) \geq s\})<\infty \Big) = 1 >  P^\xi_x\Big(\inf(\{y: S(y) = s\})<\infty\Big) = \mathbb{N}_x\Big(\hat{\ell}^{\rm a}_{s,i}(\infty)\Big),
        \end{equation*}
        where the inequality can be seen in e.g. \cite[Theorem III.5]{bertoin1996levy} and where $S$ denotes the subordinator component of $\xi$. Furthermore we have that 
        \begin{equation*}
            \mathbb{N}_x\Big(\hat{\ell}^{\rm d}_{s,i}(\infty)\Big) = 1 - P^\xi_x(\inf(\{y: S(y) = s\})<\infty) = P^\xi_x(\inf(\{y: S(y) = s\})=\infty) \in (0,1),
        \end{equation*}
        where we note that the final probability is in $(0,1)$ as the subordinator can either cross or hit level $s$, see e.g.\ \cite[Theorem III.5]{bertoin1996levy}.
    \end{proof}
\end{proposition}
We later apply this result above to the snakes in the Poisson random measure given in Definition \ref{definition oo snake}. Before that though we briefly make a remark on particles that start dormant and do not wake up before time $s$.
\begin{remark}\label{remark snake dormant until after time t}
    Fix $s \geq 0$. In the setting of the Definition \ref{definition oo snake}, for $i \in I$, we have that for $s_i > s$ that $H^i_t(s)=0$, $A^i_t(s)=0$ and $\text{Res}^i_t(s) = f^i_t$ for all $t \in [0,\sigma(f^i)]$. In this instance the stopping times $S^{h, \rm d}_{s,n,i}$, $T^{h, \rm d}_{s,n,i}$ are then simply the up- and down-crossing times of $[0,h]$ for $f^i$. Since $f^i$ is an excursion, this means we have one such downcrossing if $T_h(f^i)<\infty$ and a zero downcrossings otherwise.
    \par
    The case where $s_i=s$ does not affect our analysis as for fixed $s$ we have $s \neq s_i$ for all $i \in I$, $\mathbb{P}_{(\mu_{\rm d}, \mu_{\rm a})}$-a.s.
\end{remark}
We will define the representation of ooSBM in Theorem \ref{theorem ooSBM ffd convergence} but first we state a technical lemma that we will require in the proof of the theorem. 
\begin{lemma}\label{lemma poisson h downcrossings}
    Consider a (possibly countably infinite) collection of independent reflecting Brownian motions $(R^j, j \in J)$ stopped at the local times at zero $(C_j, j \in J)$ on some common probability space $(\Omega, \mathcal{F}, P)$. Let $C=\sum_{j \in J}C_j$ and suppose $C$ is finite. For $j \in J$ let $D^j_h$ be the number of downcrossings from level $h$ to level $0$ for $R^j$. Then we have that
    \begin{equation*}
        \lim_{h \downarrow 0} h \sum_{j \in J} D^j_h = \frac{C}{2} \hspace{0.5cm} \text{a.s.}
    \end{equation*}
    \begin{proof}
        We first note that for $h>0$, $(D^j_h, j \in J)$ form a family of independent Poisson random variables with $D^j_h \sim \text{Poisson}(C_j/2h)$. We recall that the sum of independent Poisson random variables is again a Poisson random variable and so $\sum_{j \in J} D^j_h \sim \text{Poisson}(C/2h)$. The result then follows by the strong law of large numbers for Poisson processes.
    \end{proof}
\end{lemma}
We now give our representation of ooSBM as exit measures of the on/off Brownian snake. 
\begin{theorem}\label{theorem ooSBM ffd convergence}
    Fix $s>0$. For $t \geq 0$, $s_i>s$ let $\text{Res}^{i, \rm d}_t(s) \coloneqq \text{Res}^i_t(s)\mathbbm{1}_{A^i_t(s)=0}$ and $\text{Res}^{i, \rm a}_t(s) \coloneqq \text{Res}^i_t(s)\mathbbm{1}_{A^i_t(s)=1}$. Firstly, there exist local time processes $\ell^{\rm a}_{s,i}(\cdot)$ and $\ell^{\rm d}_{s,i}(\cdot)$ at level $0$ for $(\text{Res}^{i, \rm d}_t(s))_{t \geq 0}$ and $(\text{Res}^{i, \rm a}_t(s))_{t \geq 0}$.
    \par
    Secondly, we have
    \begin{equation*}
        (\varepsilon \mathcal{Z}^{(\varepsilon, \rm d)}_s, \varepsilon\mathcal{Z}^{(\varepsilon, \rm a)}_s) \xrightarrow[\varepsilon \rightarrow 0]{a.s.} (\mathcal{Z}^{(\rm d)}_s, \mathcal{Z}^{(\rm a)}_s),
    \end{equation*}
    on the space of $\mathcal{M}_F(\mathbb{R}^d)^2$. For $s=0$ set $\mathcal{Z}^{(\rm d)}_s$, $\mathcal{Z}^{(\rm a)}_s$ to be $\mu_{\rm d}$, $\mu_{\rm a}$. Then the limits $((\mathcal{Z}^{(\rm d)}_s, \mathcal{Z}^{(\rm a)}_s))_{s \geq 0}$ have the finite-dimensional distributions of $\text{ooSBM}(4, c, \Tilde{c})$ started at $(\mu_{\rm d}, \mu_{\rm a})$. 
    \par
    Finally, for fixed $s>0$, and for any continuous bounded function $\Phi:\mathbb{R}^d \rightarrow [0,\infty)$ we have that
    \begin{equation}\label{equation oosbm snake dormant rep}
        \langle \mathcal{Z}^{(\rm d)}_s, \Phi \rangle \coloneqq \int_x \Phi(x) \mathcal{Z}^{(\rm d)}_s(dx) = e^{-\Tilde{c}s}\int_x \Phi(x) \mu_{\rm d}(dx) + \sum_{i \in I: s_i<s} \int_{t=0}^{\sigma(f^i)} \Phi(\hat{w}^i_t)d\ell^{\rm d}_{s, i}(t)
    \end{equation}
    and
    \begin{equation}\label{equation oosbm snake active rep}
        \langle \mathcal{Z}^{(\rm a)}_s, \Phi \rangle \coloneqq \int_x \Phi(x) \mathcal{Z}^{(\rm a)}_s(dx) = \sum_{i \in I: s_i<s} \int_{t=0}^{\sigma(f^i)} \Phi(\hat{w}^i_t)d\ell^{\rm a}_{s, i}(t).
    \end{equation}
    \begin{proof}
        For the first part of the theorem suppose $i \in I$ and $s_i<s$. The processes $(\text{Res}^{i, \rm d}_t(s))_{t \geq 0}$, $(\text{Res}^{i, \rm a}_t(s))_{t \geq 0}$ are simply the processes $(\text{Res}^i_{\eta(s,t',i, \rm a)})_{t' \geq 0}$, $(\text{Res}^i_{\eta(s,t',i, \rm d)})_{t' \geq 0}$ from Proposition \ref{proposition snake reflecting brownian motion} with a time change and so by Proposition \ref{proposition snake reflecting brownian motion} this implies the existence of the local time processes $\ell^{\rm d}_{s,i}(\cdot)$ and $\ell^{\rm a}_{s,i}(\cdot)$.
        \par
        Now we show the second part of the theorem and first we consider $i \in I$ with $s_i>s$ i.e. the particles that started dormant and have remained in the dormant state until time $s$. As before let $\varepsilon > 0$ and let $h = \varepsilon/2$. We have by Remark \ref{remark snake dormant until after time t} that $M_i^{h,d}$ is equal to one iff $\sup\{f^i_t, 0 \leq t \leq \sigma(f^i)\}$ is greater than $h$, otherwise $M_i^{h,d}$ is equal to zero. Furthermore we have that $M_i^{h,a} = 0$, because if $s_i > s$, then $A_\cdot^i(s) = 0$. Therefore we have
        \begin{equation*}
            \Big(\varepsilon \sum_{i \in I:s_i>s} \sum_{n=1}^{M^{h, \rm d}_i} \delta_{S^{h, \rm d}_{s,n,i}},  \varepsilon \sum_{i \in I:s_i> s} \sum_{n=1}^{M^{h, \rm a}_i} \delta_{S^{h, \rm a}_{s,n,i}}\Big) = \Big(\varepsilon \sum_{i \in I:s_i>s, T_h(f^i)<\infty} \delta_{T_h(f^i)}, 0\Big),
        \end{equation*}
        which means that 
        \begin{equation*}
            \Big(\varepsilon \sum_{i \in I:s_i>s} \mathcal{Z}^{(\varepsilon, \rm d)}_{s,i}, \varepsilon \sum_{i \in I:s_i>s} \mathcal{Z}^{(\varepsilon, \rm a)}_{s,i}\Big) = \Big(\varepsilon \sum_{i \in I:s_i>s, T_h(f^i)<\infty} \delta_{b_i} , 0\Big)
        \end{equation*}
        and this is a Poisson random measure with intensity $(e^{-\Tilde{c}s}\mu_{\rm d}(db)/\varepsilon, 0)$ rescaled by $\varepsilon$ and this converges to the measure $(e^{-\Tilde{c}s}\mu_{\rm d}(db),0)$ as $\varepsilon$ tends to zero $\mathbb{P}_{(\mu_{\rm d},\mu_{\rm a})}$-a.s. by the strong law of large numbers for Poisson processes, see Dudley \cite[Theorem 11.4.1]{dudley2002real}.
        \par
        Now we return to $i \in I$ with $s_i<s$ (note as discussed in Remark \ref{remark snake dormant until after time t} there are no $i \in I$ with $s_i=s$ $\mathbb{P}_{(\mu_{\rm d},\mu_{\rm a})}$-a.s.). The points $(S^{h, \rm a}_{s,n,i})_{1 \leq n \leq M^{h, \rm a}_i}$ are the left-endpoints of the $h$-downcrossings to level $0$ for the process $(\text{Res}^i_{\eta(s, \rm a,t,i)})_{t \geq 0}$ and so we have that 
        \begin{equation}\label{equation snake local time limits}
            \Big(\varepsilon \sum_{n = 1}^{M^{h, \rm d}_i} \delta_{S^{h, \rm d}_{s,n,i}}(\cdot), \varepsilon \sum_{n = 1}^{M^{h, \rm a}_i} \delta_{S^{h, \rm a}_{s,n,i}}(\cdot)\Big) \xrightarrow[a.s.]{} (d\ell^{\rm d}_{s,i}(\cdot), d\ell^{\rm a}_{s,i}(\cdot))
        \end{equation}
        on the space $\mathcal{M}_F([0,\infty)^2)$, see \cite[Chapter 6]{morters2010brownian}. Strictly that reference applies to reflecting Brownian motion but the convergence is not affected by the time changes we have here. Let $\ell_{s,i} \coloneqq \ell^{\rm d}_{s,i} + \ell^{\rm a}_{s,i}$ and for $x \in E$ let $\ell_s$ denote the corresponding measure under $\mathbb{N}_x$. Then we have by the compensation formula for Poisson random measures that 
        \begin{equation*}
            \mathbb{E}_{(\mu_{\rm d}, \mu_{\rm a})} \Big[\sum_{i \in I: s_i < s} \ell_{s, i}(\infty)\Big] = \int_{b \in \mathbb{R}^d} \mathbb{N}_{(b,0)}(\ell_s(\infty)) \mu_{\rm a}(db) + \int_{b \in \mathbb{R}^d} \int_{q=0}^s \mathbb{N}_{(b,q)}(\ell_s(\infty)) \Tilde{c}e^{-\Tilde{c}q} dq \mu_{\rm d}(db).
        \end{equation*}
        We note here that for $b \in \mathbb{R}^d$, $0<q<s$ that $\mathbb{N}_{(b,q)}(\ell_s(\infty) \in \cdot) = \mathbb{N}_{(b,0)}(\ell_{s-q}(\infty) \in \cdot)$. By the second part of Proposition \ref{proposition snake reflecting brownian motion} we have $\mathbb{N}_{(b,0)}(\ell_s(\infty))$ and $\mathbb{N}_{(b,q)}(\ell_s(\infty))$ are equal to one and so the expectation in the equation above is equal to $\norm{\mu_{\rm a}}+(1- e^{-\Tilde{c}s})\norm{\mu_{\rm d}}$ which is finite and therefore $\sum_{i \in I, s_i<s}\ell_{s,i}(\infty)<\infty$ $\mathbb{P}_{(\mu_{\rm d},\mu_{\rm a})}$-a.s.
        \par
        Now let $\varepsilon'>0$ and let $M \coloneqq \sup(\{\Phi(b), b \in \mathbb{R}^d\})<\infty$. Let $N \in \mathbb{N}$ and $i_1, \dots, i_N \in I$ such that $\sum_{i \in I: i \neq i_k \forall k \in [N]} \ell^{\rm a}_{s,i}(\infty)<2\varepsilon'/M$. By Lemma \ref{lemma poisson h downcrossings} the downcrossings of the remaining contributions (conditional of the local times at level $0$) are such that
        \begin{equation}\label{equation snake local time remaining contributions}
            \lim_{\varepsilon \rightarrow 0} \varepsilon \sum_{i \in I: i \neq i_k \forall k \in [N], s_i<s} (M^{h, \rm d}_i+M^{h, \rm a}_i) < \varepsilon'/M. 
        \end{equation}
        We now consider
        \begin{multline*}
            \liminf_{\varepsilon \rightarrow 0} \sum_{i \in I} \int_{t=0}^{\sigma(f^i)} \Phi(w^i_t(H^i_t(s))) 2h \sum_{n=1}^{M^{h, \rm a}_i} \delta_{S^{h, \rm a}_{s,n,i}}(dt) \\
            \geq \sum_{k=1}^N \liminf_{\varepsilon \rightarrow 0} \int_{t=0}^{\sigma(f^{i_k})} \Phi(w^{i_k}_t(H^{i_k}_t(s))) 2h \sum_{n=1}^{M^{h, \rm a}_{i_k}} \delta_{S^{h, \rm a}_{s,n,i_k}}(dt) \geq \int_{b \in \mathbb{R}^d} \Phi(b) \mathcal{Z}^{(\rm a)}_s(db) - \varepsilon'
        \end{multline*}
        where the last inequality is by \eqref{equation snake local time remaining contributions} where we have the total mass of all the contributions we have not included has size at most $\varepsilon'/M$. The convergence is due to the fact in the limit as $\varepsilon$ tends to $0$ we are evaluating at the head of the Brownian component of the snake, $\hat{w}^i = w^i(\zeta(x^i)-)$, $\ell$-a.e. and we recall that the head of the snake is continuous as noted in Remark \ref{remark head of the snake}. Similarly
        \begin{multline*}
            \limsup_{\varepsilon \rightarrow 0} \sum_{i \in I} \int_{t=0}^{\sigma(f^i)} \Phi(w^i_t(H^i_t(s))) 2h \sum_{n=1}^{M^{h, \rm a}_i} \delta_{S^{h, \rm a}_{s,n,i}}(dt) \\
            \leq \sum_{k=1}^N \limsup_{\varepsilon \rightarrow 0}  \int_{t=0}^{\sigma(f^{i_k})} \Phi(w^{i_k}_t(H^{i_k}_t(s))) 2h \sum_{n=1}^{M^{h, \rm a}_{i_k}} \delta_{S^{h, \rm a}_{s,n,i_k}}(dt) + \varepsilon' \leq \int_{b \in \mathbb{R}^d} \Phi(b) \mathcal{Z}^{(\rm a)}_s(db) + \varepsilon'.
        \end{multline*}
        As $\varepsilon'$ is arbitrary we have the convergence for the active component at time $s$. The argument for the dormant component is the same except there is the contribution from the $i \in I$ with $s_i>s$ to include but this is shown to converge as $\varepsilon \rightarrow 0$ in the first part of this proof. 
        \par
        To show the convergence in finite-dimensional distributions recall from 
        Proposition \ref{proposition existence of ooSBM} that ooSBM is the limit of of ooBBM. Consider any $(s_1, \dots, s_l)$. Then we have the a.s convergence holds jointly for $s_1$, $\dots$, $s_l$ and by Proposition \ref{proposition identification oobmm h erased tree} these finite-dimensional distributions converge to the finite-dimensional distributions of ooSBM. 
        \par
        The final part of the theorem follows from the previous parts, namely \eqref{equation snake local time limits}.
    \end{proof}
\end{theorem}
The representations given in \eqref{equation oosbm snake dormant rep} and \eqref{equation oosbm snake active rep} show us that $\mathcal{Z}^{(\rm d)}_s$, $\mathcal{Z}^{(\rm a)}_s$ are exit measures constructed from the exit local times $\ell^{(\rm d)}_{s, i}$, $\ell^{(\rm a)}_{s, i}$ for $i \in I$. Such measures are constructed in \cite{BertoinLeGallLeJan1997} but under an additional assumption which is not satisfied in our setting.
\par
We give a remark which gives a connection of Theorem \ref{theorem ooSBM ffd convergence} to SBM.
\begin{remark}
    If we take the switching rate from active to dormant as $c=0$ in everything we have done above, then we obtain a limit which has the finite-dimensional distributions of SBM. This has already been done in Le Jan \cite{le1991superprocesses} and our result above can be thought as a generalisation to the case of ooSBM. 
\end{remark}
We state a result about the support of the dormant component of the Brownian snake that immediately follows from the finite-dimensional distributions result given in Theorem \ref{theorem ooSBM ffd convergence}.
\begin{corollary}\label{corollary dormant support}
    Let $(\mathcal{Z}^{(\rm d)}_r, \mathcal{Z}^{(\rm a)}_r)_{r \geq 0}$ be the process defined in Theorem \ref{theorem ooSBM ffd convergence} and let $s \geq s' \geq 0$. Firstly we have $\text{supp}(\mu_{\rm d}) \subseteq \text{supp}(\mathcal{Z}^{(\rm d)}_s)$ $\mathbb{P}_{(\mu_{\rm d}, \mu_{\rm a})}$-a.s. and secondly we have $\text{supp}(\mathcal{Z}^{(\rm d)}_{s'}) \subseteq \text{supp}(\mathcal{Z}^{(\rm d)}_s)$ $\mathbb{P}_{(\mu_{\rm d}, \mu_{\rm a})}$-a.s.
    \begin{proof}
        The first part follows immediately from the construction of $\mathcal{Z}^{(\rm d)}_s$ in Theorem \ref{theorem ooSBM ffd convergence}. The second part follows from the first part and the Markov property of ooSBM.
    \end{proof}
\end{corollary}
The intuition behind the result in Corollary \ref{corollary dormant support} is that any mass from the dormant component of the snake has some exponentially decaying part which has not switched to the active state, and so the support of the dormant component cannot decrease at any point.
\par
We also obtain a limit which gives the finite-dimensional distributions of another superprocess that will be useful in Section \ref{section range coupling proof} to prove Theorem \ref{theorem equality ranges}. We define another Brownian snake in Definition \ref{definition damped Brownian snake} to construct this superprocess. 
\begin{definition}[Damped Brownian snake]\label{definition damped Brownian snake}
    We construct the \emph{damped Brownian snake} from the on/off Brownian snake $\sum_{i \in I} \delta_{((b_i,s_i), f^i, x^i)}$ given in Definition \ref{definition oo snake} on the probability space $(\Omega, \mathcal{F}, \mathbb{P}_{(\mu_{\rm d},\mu_{\rm a})})$. Recall, for $i \in I$, $x^i=(w^i, v^i)$ where $w^i$ is the Brownian motion component and $v^i$ is the subordinator component. Let $\Tilde{I} \coloneqq \{ i \in I: s_i = 0\}$. For $i \in \Tilde{I}$, for $r \in [0,\sigma(f^i)]$, we define a path-valued process $\Tilde{v}_r^i$ to be the indicator function for whether a dormancy has occurred. Specifically, we set $\Tilde{v}_r^i(0)=0$ and for $s>0$ we set $\Tilde{v}_r^i(s) = 1$ if there exists $u \in (0,s]$ such that $v_r^i(u) >v^i_r(u-)$ (i.e., if there is a jump in the subordinator $v_r^i$ at time $s$ or earlier) and we set $\Tilde{v}_r^i(s)=0$ otherwise.  Also, for $i \in \Tilde{I}$, $r \in [0,\sigma(f^i)]$ let $t^i$ be the unit drift deterministic process defined by $t^i_r(s)=s$ for all $s \geq 0$. We then call 
    \begin{equation*}
        \sum_{i \in \Tilde{I}} \delta_{(b^i, f^i, (w^i, t^i, \Tilde{v}^i))}
    \end{equation*}
    the \emph{damped Brownian snake}.
\end{definition}
By construction we have that $(w^i, t^i, \Tilde{v}^i)$, $i \in \Tilde{I}$, are Brownian snakes with the discontinuous spatial motion of a Markov process with three independent components given by: a Brownian motion on $\mathbb{R}^d$; a deterministic unit drift; an independent component that starts at $0$ and moves to $1$ at time given by an Exponential random variable with parameter $c$. We use these snakes to construct another process with the finite dimensional distributions of a different superprocess.
\begin{proposition}[Damped super-Brownian motion]\label{proposition damped sbm}
    Recall the processes $(\Tilde{\mathcal{Z}}_y^{(\varepsilon)})_{y \geq 0}$, $\varepsilon>0$ from Corollary \ref{corollary bbm extra killing} on the probability space $(\Omega, \mathcal{F}, \mathbb{P}_{(\mu_{\rm d},\mu_{\rm a})})$. Then, for $s>0$, we have the existence of a scaling limit $\Tilde{\mathcal{Z}}_s$ given by 
    \begin{equation*}
        \varepsilon \Tilde{\mathcal{Z}}^{(\varepsilon)}_s \xrightarrow[\varepsilon \rightarrow 0]{a.s.} \Tilde{\mathcal{Z}}_s.
    \end{equation*}
    Furthermore, for any continuous bounded function $\Phi: \mathbb{R}^d \rightarrow [0,\infty)$, we have 
    \begin{equation}\label{equation damped sbm integral rep}
        \langle \Tilde{\mathcal{Z}}_s, \Phi \rangle \coloneqq \int_x \Phi(x) \Tilde{\mathcal{Z}}_s(dx) = \sum_{i \in \Tilde{I}} \int_{r=0}^{\sigma(f^i)} \Phi(\hat{w}^i_r)\mathbbm{1}_{\hat{\Tilde{v}}^i_r =0}dL^i_s(r)
    \end{equation}
    where, for $i \in \Tilde{I}$, $L^i_s(\cdot)$ are the local times for the excursions $f^i$ defined in \eqref{equation normalised local time}. Furthermore this measure-valued process $(\Tilde{\mathcal{Z}}_s)_{s \geq 0}$ has the finite dimensional distributions of a superprocess with continuous sample paths and with initial state $\Tilde{\mathcal{Z}}_0 = \mu_{\rm a}$. We call this superprocess \emph{damped super-Brownian motion with parameter $4$ and damping rate $c$} or \emph{$\text{DSBM}(4, c)$}. 
    \par
    Finally, when the prelimiting processes $\Tilde{\mathcal{Z}}^{(\varepsilon)}_s$ which are $\text{BBM}(2/\varepsilon, 2/\varepsilon+c)$ are replaced by $\text{BBM}(\gamma/2\varepsilon, \gamma/2\varepsilon + c)$ then the limit in distribution is another superprocess that we call \emph{damped super-Brownian motion with parameter $\gamma$ and damping rate $c$} or \emph{$\text{DSBM}(\gamma, c)$}. 
    \begin{proof}
        We can apply Lemma \ref{lemma snake exit open set general} to the Brownian snakes that form the damped Brownian snake of Definition \ref{definition damped Brownian snake}. For the snakes process $(w^i, t^i, \Tilde{v}^i)$, $i \in \Tilde{I}$, we let the exit set $D$ be given by $\mathbb{R}^d \times [0, y) \times \{0\}$. It is elementary to show that the exit local time in this case has associated Lebesgue-Steiltjes measure given by $\mathbbm{1}_{\hat{\Tilde{v}}^i_r =0}dL^i_s(r)$. By the analogous arguments that were given in Theorem \ref{theorem ooSBM ffd convergence} we obtain the a.s. convergence and the representation for $\Tilde{\mathcal{Z}}_y$.
        \par
        The scaling limit here is analogous to the standard scaling limit result for SBM given in Proposition \ref{proposition SBM existence} and it is an instance of the general result given in \cite[Chapter II]{le1999spatial} which shows the existence of superprocesses as scaling limits.
    \end{proof}
\end{proposition}
In the setting of Le Gall \cite{le1999spatial}, the $\text{DSBM}(\gamma, c)$ has branching mechanism $\phi(u) = (\gamma/2)u^2 + cu$ compared to $\text{SBM}(\gamma)$ which has branching mechanism $\phi(u) = (\gamma/2)u^2$. The process $\text{DSBM}(\gamma, c)$ has been studied in the literature, \cite{abraham2002poisson, abraham2002representations, Dhersin_Serlet_2000} along with generalisations of these processes. See also \cite[Proposition 4]{abraham2002poisson} to see how the thinning procedure from the term $\mathbbm{1}_{\hat{\Tilde{v}}^i_r =0}$ in \eqref{equation damped sbm integral rep} is equivalent to using excursions $f^i_r$ for Brownian motion with negative drift, and how this then corresponds to the branching mechanism $\phi(u) = (\gamma/2)u^2 + cu$.
\par
We note that another way to see the a.s.\ convergence result in Proposition \ref{proposition damped sbm} is to take $\Tilde{c} = 0$ in Theorem \ref{theorem ooSBM ffd convergence} and apply the results there. In that case, the active component of the ooBBM is a $\text{BBM}(2/\varepsilon, 2/\varepsilon + c)$ with an initial cluster of particles distributed as a Poisson random measure with intensity $\mu_{\rm a}$ and this converges a.s.\ at each fixed time.
\par
We define the probability measure $\Tilde{\mathbb{P}}_{\mu_{\rm a}}(\cdot) \coloneqq \mathbb{P}_{(\mu_{\rm d}, \mu_{\rm a})}((\Tilde{\mathcal{Z}}_y)_{y \geq 0} \in \cdot)$; note that this definition does not depend on $\mu_{\rm d}$. For $x \in \mathbb{R}^d \times [0,\infty)$ recall the probability measure $\mathbb{P}_x$ from the description below Theorem \ref{theorem bivariate brownian snake}. We construct $(\Tilde{\mathcal{Z}}_y)_{y \geq 0}$ under $\mathbb{P}_x$ in the analogous way for our construction of $(\Tilde{\mathcal{Z}}_y)_{y \geq 0}$ under $\mathbb{P}_{(\mu_{\rm d}, \mu_{\rm a})}$ given in Proposition \ref{proposition damped sbm}. Furthermore, for $b \in \mathbb{R}^d$, we define the corresponding excursion measure 
\newline
$\Tilde{\mathbb{N}}_b(\cdot) \coloneqq \int_f n(df) \mathbb{P}_{(b,0)|f}((\Tilde{\mathcal{Z}}_y)_{y \geq 0} \in \cdot)$, recall Definition \ref{definition brownian excursion} and the description below that definition.
\par
We now state a general result that is useful in our context of defining measures using Brownian snakes.
\begin{lemma}\label{lemma support continuous integral}
    Let $f:[0,\sigma] \rightarrow \mathbb{R}^d$ be a continuous function and let $L:[0,\sigma] \rightarrow [0,\infty)$ be a continuous increasing function with $L(0)=0$. Consider the Lebesgue-Stieltjes measure $dL$ defined from $L$. Let $Y$ be the measure defined via Borel-measurable functions $\Phi:\mathbb{R}^d \rightarrow [0,\infty)$ as 
    \begin{equation*}
        \langle Y, \Phi \rangle \coloneqq \int_{b \in \mathbb{R}^d} \Phi(b)Y(db) := \int_{r=0}^\sigma \Phi(f(r)) dL(r).
    \end{equation*}
    Then we have that
    \begin{equation*}
        \text{supp}(Y) = \{f(r): r \in \text{supp}(dL)\}.
    \end{equation*}
\end{lemma}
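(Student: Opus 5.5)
The plan is to recognise $Y$ as the image measure of $dL$ under $f$. Indeed, by the definition of $\langle Y, \Phi\rangle$ applied to $\Phi = \mathbbm{1}_A$ for Borel $A \subseteq \mathbb{R}^d$, we have $Y(A) = dL(f^{-1}(A))$. Here $dL$ is regarded as a finite Borel measure on the compact interval $[0,\sigma]$ with its relative topology, and $\text{supp}(dL)$ is defined as in Definition \ref{definition support and range} but inside $[0,\sigma]$. I then prove the two inclusions separately, using only the characterisation noted after Definition \ref{definition support and range}: a point lies in the support if and only if every open neighbourhood of it has positive measure.

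First I check that $K \coloneqq \{f(r): r \in \text{supp}(dL)\}$ is closed. The set $\text{supp}(dL)$ is a closed subset of the compact set $[0,\sigma]$, hence compact. Since $f$ is continuous, $K$ is compact and therefore closed in $\mathbb{R}^d$.

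For the inclusion $\text{supp}(Y) \subseteq K$, let $U \coloneqq \mathbb{R}^d \setminus K$, which is open. Then $f^{-1}(U)$ is relatively open in $[0,\sigma]$ and is disjoint from $\text{supp}(dL)$. The complement of the support of a finite measure on a second countable space has measure zero, because it is a countable union of null open sets by the Lindelöf property. Hence $Y(U) = dL(f^{-1}(U)) = 0$. Since $\text{supp}(Y)$ is the complement of the largest open $Y$-null set, it follows that $\text{supp}(Y) \subseteq K$.

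For the reverse inclusion $K \subseteq \text{supp}(Y)$, take $x = f(r)$ with $r \in \text{supp}(dL)$, and let $U$ be any open neighbourhood of $x$. By continuity of $f$, the set $f^{-1}(U)$ is a relatively open neighbourhood of $r$ in $[0,\sigma]$. Because $r \in \text{supp}(dL)$, this gives $Y(U) = dL(f^{-1}(U)) > 0$. Hence $x \in \text{supp}(Y)$.

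The only delicate point is the topological bookkeeping in the first inclusion. Supports must be taken relative to $[0,\sigma]$, so that endpoints such as $r=0$ or $r=\sigma$ are handled correctly. The closedness of $K$ is what makes the first inclusion give equality rather than containment in the closure $\overline{K}$. Once both are stated explicitly, the rest is immediate.
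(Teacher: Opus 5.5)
Your proof is correct and matches the paper's approach: the paper only identifies $Y$ as the pushforward of $dL$ under $f$ and calls the lemma elementary. You supply the omitted details. These are the two preimage inclusions, together with the compactness of $\text{supp}(dL)\subseteq[0,\sigma]$, which makes $f(\text{supp}(dL))$ closed so that no closure is needed.
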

This result is elementary and $Y$ is referred to as the \emph{pushforward} of $dL$ under $f$. We note from Lemma \ref{lemma support continuous integral} that, under suitable conditions, the support of an exit measure is the range of the head of the snake over the support of the exit local time a.s. We conclude this section with the support properties of $(\mathcal{Z}^{(\rm d)}_s, \mathcal{Z}^{(\rm a)}_s)_{s \geq 0}$ and $(\Tilde{\mathcal{Z}}_y)_{y \geq 0}$ defined in Theorem \ref{theorem ooSBM ffd convergence} and Proposition \ref{proposition damped sbm}. For $s \geq 0$ we define
\begin{equation}\label{equation overlayed supports}
    \mathcal{Z}_s \coloneqq \mathcal{Z}^{(\rm d)}_s + \mathcal{Z}^{(\rm a)}_s.
\end{equation}
\begin{lemma}\label{lemma supports sbm and oosbm}
    Let $(\mathcal{Z}^{(\rm d)}_r, \mathcal{Z}^{(\rm a)}_r)_{r \geq 0}$ be the process given in Theorem \ref{theorem ooSBM ffd convergence} and let $(\mathcal{Z}_r)_{r \geq 0}$ be constructed from it as in \eqref{equation overlayed supports}. Let $(\Tilde{\mathcal{Z}}_r)_{r \geq 0}$ be the process given in Proposition \ref{proposition damped sbm}. Let $s > 0$. Then we have $\mathbb{P}_{(\mu_{\rm d}, \mu_{\rm a})}$-a.s. that
    \begin{equation*}
        \text{supp}(\mathcal{Z}_s) = \text{supp}(\mathcal{Z}^{(\rm d)}_s) \cup \text{supp}(\mathcal{Z}^{(\rm a)}_s) = \text{supp}(\mu_{\rm d}) \cup \overline{\bigcup_{i \in I} \{\hat{w}^i_{T^{1/m}_{s,n,i}}: m \in \mathbb{N}, 1 \leq n \leq M_i^{1/m}\}};
    \end{equation*}
    \begin{equation*}
        \text{supp}(\Tilde{\mathcal{Z}}_s) = \overline{\bigcup_{i \in \Tilde{I}}\{\hat{w}^i_r: r \in [0,\sigma(f^i)], f^i_r = s, \hat{\Tilde{v}}^i_r=0\}},
    \end{equation*}
    \begin{proof}
        Fix $s>0$. We split the measure $\mathcal{Z}_s$ into measures $\mathcal{Z}_{s, i}$, for $i \in I$, which consist of the active and dormant contributions from the snakes $x^i$, $i \in I$. Specifically, for $i \in I$, $\mathcal{Z}_{s, i}$ is the measure such that for any Borel-measurable function $\Phi$ we have
        \begin{equation}\label{equation components of total measure oosbm}
            \langle \mathcal{Z}_{s, i}, \Phi \rangle = \int_{r=0}^{\sigma(f^i)} \Phi(\hat{w}^i_r) d\ell_{s, i}(r)
        \end{equation}
        and it follows that, by Theorem \ref{theorem ooSBM ffd convergence} that
        \begin{equation*}
            \mathcal{Z}_s = e^{-\Tilde{c}s}\mu_{\rm d} + \sum_{i \in I:s_i<s}\mathcal{Z}_{s, i}.
        \end{equation*}
        The local time processes $(\ell_{s, i}(r))_{r \geq 0}$ are all continuous and increasing a.s. and so, as the processes $(\hat{w}^i_r)_{r \geq 0}$ are also all continuous a.s., we have, by Lemma \ref{lemma support continuous integral}, that
        \begin{equation*}
            \text{supp}(\mathcal{Z}_{s, i}) = \{\hat{w}^i_r: r \in \text{supp}(d\ell_{s, i}) \}
        \end{equation*}
        for all $i \in I$ a.s. For the remainder of this proof of this first result we work under this almost-sure event. Fix $i \in I$. We note that 
        \begin{equation*}
            \{\hat{w}^i_r: r \in \text{supp}(d\ell_{s, i}) \} = \{\hat{w}^i_{\eta(s, u, i)}: u \in \text{supp}(d\hat{\ell}_{s, i})\} 
        \end{equation*}
        as $\ell_{s, i}$ and $\hat{\ell}_{s, i}$ are related by a time change given by the inverse of $\eta$. To see this, take $\tau = \int_{r=0}^{\sigma(f^i)} \mathbbm{1}_{\text{Res}^i_r(s)>0} dr$ and define a right inverse $\Tilde{\eta}:[0,\sigma(f^i)] \rightarrow [0,\tau]$ of $\eta(s, \cdot, i)$ by $\Tilde{\eta}(w) = \int_{r=0}^w \mathbbm{1}_{\text{Res}^i_r(s)>0} dr$. Then $\Tilde{\eta}$ is a continuous function and $\ell_{s, i}$ is a continuous increasing function, and therefore we have for any Borel-measurable $\Phi$
        \begin{equation*}
            \langle d\hat{\ell}_{s, i}, \Phi \rangle = \int_{x=0}^{\sigma} \Phi(\Tilde{\eta}(x))d\ell_{s, i}(x).
        \end{equation*}
        Therefore, by Lemma \ref{lemma support continuous integral}, we have that $\text{supp}(\hat{\ell}_{s, i}) = \{\Tilde{\eta}(x): x \in \text{supp}(\ell_{s, i})\}$ and so
        \begin{equation*}
            \text{supp}(d\ell_{s, i}) = \{\eta(s, u, i): u \in \text{supp}(d\hat{\ell}_{s, i})\}.
        \end{equation*}
        \par
        The set $\{r \in [0,\sigma(f^i)], f^i_r > H^i_r(s)\}$ is open, see Lemma \ref{lemma age process properties}, and so we can write it as $\bigcup_{j \in J_i} (a_j, b_j)$ for disjoint intervals $(a_j, b_j)$ of $[0,\sigma(f^i)]$ and some index set $J_i$. Note that 
        \begin{equation*} 
            \eta(s, [0,\tau], i) = \{r \in [0, \sigma(f^i)] | \exists r' \in [0,\tau]: \eta(s,r',i)=r\} = \bigcup_{j \in J_i} [a_j, b_j).
        \end{equation*}
        From this it follows that
        \begin{equation*}
            \text{supp}(d\ell_{s, i}) \subseteq \overline{\bigcup_{j \in J_i}[a_j, b_j)}.
        \end{equation*}
        For any $x \in (a_j, b_j)$ it is a simple argument to show that $x \notin \text{supp}(\ell_{s, i})$ and so
        \begin{equation*}
            \text{supp}(d\ell_{s, i}) \subseteq \overline{\bigcup_{j \in J_i}\{a_j, b_j\}}.
        \end{equation*}
        As $\hat{w}^i_{a_j} = \hat{w}^i_{b_j}$ (by the snake property) we then have
        \begin{equation*}
            \text{supp}(\mathcal{Z}_{s, i}) \subseteq \overline{\bigcup_{j \in J_i} \{\hat{w}^i_{b_j}\}}.
        \end{equation*}
        Almost surely, the support of local time at level $0$ for reflecting Brownian motion is precisely the zero-set (the set of points for which the process takes the value $0$), see e.g.\ \cite[Proposition VI.2.5]{revuz2013continuous}. Therefore, we also have almost surely that the zero-set of $u \mapsto \text{Res}^i_{\eta(s,u,i)}$ is exactly the support of $\hat{\ell}_{s, i}$, for all $i \in I$ a.s. Putting all this together we find that almost surely for all $i \in I$
        \begin{multline*}
            \overline{\{\hat{w}^i_{T^{1/m}_{s,n,i}}: m \in \mathbb{N}, 1 \leq n \leq M_i^{1/m}\}} = \overline{\bigcup_{j \in J_i} \{\hat{w}^i_{b_j}}\} \subseteq \overline{\{\hat{w}^i_{\eta(u,s,i)}: u \in \text{supp}(d\hat{\ell}_{s, i})\}} \\
            = \text{supp}(\mathcal{Z}_{s, i}) \subseteq \overline{\bigcup_{j \in J_i} \{\hat{w}^i_{b_j}\}} = \overline{\{\hat{w}^i_{T^{1/m}_{s,n,i}}: m \in \mathbb{N}, 1 \leq n \leq M_i^{1/m}\}}
        \end{multline*}
        and therefore all the set-inclusions are in fact set-equalities. Taking a union over $i \in I$ and then taking the closure of this union gives the almost-sure result for the support of $\mathcal{Z}_s$.
        \par
        For DSBM our proof is slightly different. We split the measure $\Tilde{\mathcal{Z}}_s$ into components $\Tilde{\mathcal{Z}}_{s, i}$, $i \in \Tilde{I}$, from the snakes $x^i$. For $i \in \Tilde{I}$, consider
        \begin{equation*}
            \Tilde{L}^i_s(t) \coloneqq \int_{r=0}^t \mathbbm{1}_{\hat{\Tilde{v}}^i_r = 0} dL^i_s(r).
        \end{equation*}
        We note that $(\Tilde{L}^i_s(t))_{t \geq 0}$, $i \in I$, are all a.s. continuous and increasing processes and, with the associated Lebesgue-Stieltjes measures $\Tilde{L}^i_s(\cdot)$, we consider the measures $\Tilde{\mathcal{Z}}_{s, i}$ that satisfy
        \begin{equation*}
            \langle \Tilde{\mathcal{Z}}_{s, i}, \Phi \rangle = \int_{b \in \mathbb{R}^d} \Phi(b) \Tilde{\mathcal{Z}}_{s, i}(db) = \int_{r=0}^{\sigma(f^i)} \Phi(f^i_s(r))d\Tilde{L}^i_s(r)
        \end{equation*}
        for any Borel-measurable function $\Phi$. It follows that $\Tilde{\mathcal{Z}}_s$ is the sum of the measures $\Tilde{\mathcal{Z}}_{s, i}$, $i \in I$. We also note that $d\Tilde{L}^i_s(t) = \mathbbm{1}_{\hat{\Tilde{v}}^i_t = 0} dL^i_s(t)$. From this representation we claim that
        \begin{equation*}
            \text{supp}(d\Tilde{L}^i_s) = \overline{\{r \in [0, \sigma(f^i)]:  f^i_r=s, \hat{\Tilde{v}}^i_r = 0\}}.
        \end{equation*}
        To see this, we can use the classical result for SBM (Theorem \ref{theorem sbm snake}) that a.s. the support for the local time of the Brownian excursion at level $s$ is given by $\{r \in [0, \sigma(f^i)]:  f^i_r=s\}$. From here is it a simple argument with $d\Tilde{L}^i_s(t) = \mathbbm{1}_{\hat{\Tilde{v}}^i_t = 0} dL^i_s(t)$ to show that $x$ is in the support of $\Tilde{L}^i_s$ if and only if $x$ is in $\overline{\{r \in [0, \sigma(f^i)]:  f^i_r=s, \hat{\Tilde{v}}^i_r = 0\}}$. Now, since the support of a sum of measures is the closure of the union of the supports of those measures, then, again by applying Lemma \ref{lemma support continuous integral}, we have that the support of $\Tilde{\mathcal{Z}}_s$ is as stated in the proposition.
    \end{proof}
\end{lemma}
\section{Range coupling and monotone increasing support results}\label{section range coupling proof}
In this section we prove Theorems \ref{theorem equality ranges} and \ref{theorem increasing support}. We continue to work with the on/off Brownian snake 
\begin{equation*}
    \sum_{i \in I} \delta_{((b_i,s_i), f^i, x^i)}
\end{equation*}
on the probability space $(\Omega, \mathcal{F}, \mathbb{P}_{(\mu_{\rm d},\mu_{\rm a})})$. On this probability space we define couplings of $\text{ooSBM}(4, c, \Tilde{c})$ with $\text{DSBM}(4, c)$ and $\text{SBM}(4)$. Our results apply for general initial measures $\mu_{\rm d}$, $\mu_{\rm a} \in \mathcal{M}_F(\mathbb{R}^d)$ such that $\mu_{\rm d} + \mu_{\rm a} \neq 0$ but for some of the intermediate results we consider specific cases.
\par
We state two standard results in Propositions \ref{proposition ooSBM continuous modification} and \ref{proposition closure of range of continuous process}: Proposition \ref{proposition ooSBM continuous modification} states the existence of a continuous modification of a process with the finite-dimensional distributions of a continuous process; Proposition \ref{proposition closure of range of continuous process} concerns the range of measure-valued processes that are continuous with respect to the Prokhorov metric.
\begin{proposition}\label{proposition ooSBM continuous modification}
    Let $(E',d')$ be a complete and separable metric space and $(\Omega, \mathcal{F}, \mathbb{P})$ a probability space on which we have random variables $\mathcal{Q}_t: \Omega \rightarrow E'$, $t \geq 0$. Suppose further that there is another probability space $(\Omega', \mathcal{F}', \mathbb{P}')$ on which random variables $\mathcal{Q}'_t: \Omega' \rightarrow E'$ form a path-continuous process with the same finite-dimensional distributions as $\mathcal{Q} = (\mathcal{Q}_t, t \geq 0)$. Then $(\mathcal{Q}_t, t \geq 0)$ has a modification $(\Tilde{\mathcal{Q}}_t, t \geq 0)$ with continuous paths, i.e. for each fixed $t \geq 0$ we have $\mathbb{P}(\Tilde{\mathcal{Q}}_t = \mathcal{Q}_t) = 1$. 
\end{proposition}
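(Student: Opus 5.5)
The plan is to transfer path-continuity from $\mathcal{Q}'$ to $\mathcal{Q}$ through a countable dense set of times. Fix the dyadic rationals $\mathbb{D}_+ \subset [0,\infty)$.

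\emph{Step 1: continuity on $\mathbb{D}_+$ is a statement about finite-dimensional distributions.} Consider the event
\begin{equation*}
    A \coloneqq \big\{ \omega : t \mapsto \mathcal{Q}_t(\omega)\ \text{is uniformly continuous on } \mathbb{D}_+ \cap [0,N] \text{ for every } N \in \mathbb{N}\big\}.
\end{equation*}
Written out, $A$ is a countable intersection over $N, k$ and countable union over $m$ of countable intersections over pairs $q,q' \in \mathbb{D}_+\cap[0,N]$ with $|q-q'|<1/m$ of the events $\{d'(\mathcal{Q}_q,\mathcal{Q}_{q'}) \leq 1/k\}$. Hence $A$ is measurable with respect to the $\sigma$-algebra generated by $(\mathcal{Q}_q)_{q \in \mathbb{D}_+}$. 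Its probability is determined by the finite-dimensional distributions: by continuity of measure, each countable intersection is a limit of finite intersections. The same event built from $\mathcal{Q}'$ has $\mathbb{P}'$-probability one, since $\mathcal{Q}'$ is path-continuous. Therefore $\mathbb{P}(A)=1$.

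\emph{Step 2: extend off the dyadics.} On $A$, define $\Tilde{\mathcal{Q}}_t \coloneqq \lim_{q \to t,\, q \in \mathbb{D}_+} \mathcal{Q}_q$. This limit exists because $E'$ is complete and the map is uniformly continuous on compacts. The resulting $\Tilde{\mathcal{Q}}$ is continuous on $[0,\infty)$ and equals $\mathcal{Q}$ on $\mathbb{D}_+$. Off $A$, set $\Tilde{\mathcal{Q}}_t \coloneqq e_0$ for a fixed $e_0 \in E'$. Measurability of $\Tilde{\mathcal{Q}}_t$ follows because it is a pointwise limit of measurable maps.

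\emph{Step 3: modification property.} Fix $t$ and choose $q_n \in \mathbb{D}_+$ with $q_n \to t$. Under $\mathbb{P}'$, we have $d'(\mathcal{Q}'_{q_n},\mathcal{Q}'_t) \to 0$ surely. Since the law of the pair $(\mathcal{Q}_{q_n},\mathcal{Q}_t)$ coincides with that of $(\mathcal{Q}'_{q_n},\mathcal{Q}'_t)$, this gives $d'(\mathcal{Q}_{q_n},\mathcal{Q}_t) \to 0$ in probability under $\mathbb{P}$. Along a subsequence the convergence is almost sure. On $A$ we also have $\mathcal{Q}_{q_n} \to \Tilde{\mathcal{Q}}_t$, so $\Tilde{\mathcal{Q}}_t = \mathcal{Q}_t$ $\mathbb{P}$-a.s.

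The main point of care is Step 1: one must check that the uniform-continuity event is genuinely determined by countably many coordinates. Then equality of the finite-dimensional laws lets us transfer its probability one from $\mathcal{Q}'$ to $\mathcal{Q}$. Separability of $E'$ ensures that $d'(\mathcal{Q}_q,\mathcal{Q}_{q'})$ is a random variable, so all the events above are measurable.
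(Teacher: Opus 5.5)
Your proposal is correct and follows the paper's route. The paper's sketch makes the same key observation: uniform continuity on a countable dense set of times is measurable with respect to the countable product $\sigma$-algebra, so equality of finite-dimensional laws transfers probability one from $\mathcal{Q}'$ to $\mathcal{Q}$, and the continuous modification then comes from extension using completeness. Using dyadic rather than all rationals makes no difference, and your Step 3 supplies the modification check that the paper leaves implicit.
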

This result is elementary and the idea behind the proof is that uniform continuity on $[0,T] \cap \mathbb{Q}$, for $T>0$, is $\mathcal{B}(E')^{[0,\infty) \cap \mathbb{Q}}$-measurable and from this we can define a continuous modification of $(\mathcal{Q}_t, t \geq 0)$. 
\begin{corollary}\label{corollary continuous modifications}
    There exist continuous modifications of the processes $(\mathcal{Z}^{(\rm d)}_s, \mathcal{Z}^{(\rm a)}_s)_{s \geq 0}$ and $(\Tilde{\mathcal{Z}}_s)_{s \geq 0}$ from Theorem \ref{theorem ooSBM ffd convergence} and Proposition \ref{proposition damped sbm} which we denote by $(\mathcal{Y}^{(\rm d)}_s, \mathcal{Y}^{(\rm a)}_s)_{s \geq 0}$ and $(\Tilde{\mathcal{Y}}_s)_{s \geq 0}$ and which are $\text{ooSBM}(4, c, \Tilde{c})$ and $\text{DSBM}(4, c)$.
    \par
    For $s \geq 0$, we define
    \begin{equation*}
        \mathcal{Y}_s \coloneqq \mathcal{Y}^{(\rm d)}_s + \mathcal{Y}^{(\rm a)}_s.
    \end{equation*}
\end{corollary}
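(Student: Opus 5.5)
The plan is to apply Proposition \ref{proposition ooSBM continuous modification} twice, once with $E' = \mathcal{M}_F(\mathbb{R}^d)^2$ and once with $E' = \mathcal{M}_F(\mathbb{R}^d)$. Both are equipped with the Prokhorov metric $\rho$ (for the product, the sum of the two Prokhorov distances). These are complete and separable metric spaces because $\mathbb{R}^d$ is, as recalled in the introduction.

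\textbf{Step 1: the on/off process.} Take $\mathcal{Q}_s \coloneqq (\mathcal{Z}^{(\rm d)}_s, \mathcal{Z}^{(\rm a)}_s)$ on $(\Omega, \mathcal{F}, \mathbb{P}_{(\mu_{\rm d},\mu_{\rm a})})$. By Theorem \ref{theorem ooSBM ffd convergence}, with the convention $\mathcal{Q}_0 = (\mu_{\rm d}, \mu_{\rm a})$, this family has the finite-dimensional distributions of $\text{ooSBM}(4, c, \Tilde{c})$ started at $(\mu_{\rm d}, \mu_{\rm a})$.

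\textbf{Step 2: a continuous reference process.} We need a path-continuous process $\mathcal{Q}'$ on some other probability space with these finite-dimensional distributions. Proposition \ref{proposition existence of ooSBM} only gives a c\`adl\`ag limit in $\mathbb{D}([0,\infty),\mathcal{M}_F(\mathbb{R}^d)^2)$. Path continuity of ooSBM comes from \cite{blath2023off}: it is the solution of a martingale problem whose martingale parts have continuous quadratic variation, and the limiting ooBBM jumps vanish. Concretely, the jumps of $\varepsilon\mathcal{Z}^{\varepsilon}$ have size at most $\varepsilon$ in $\rho$. Weak limits in the Skorokhod topology of processes with uniformly vanishing jump sizes are continuous, since $\sup_t \rho(X_t, X_{t-})$ is continuous on the set where it is small. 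So the limit in Proposition \ref{proposition existence of ooSBM} already has a.s.\ continuous paths. This is the step I expect to need the most care: we must either quote continuity from \cite{blath2023off} or run this jump-size argument, which is standard (cf.\ \cite[Section 13]{billingsley1999convergence}).

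\textbf{Step 3: conclusion for ooSBM.} Proposition \ref{proposition ooSBM continuous modification} now gives a continuous modification $(\mathcal{Y}^{(\rm d)}_s, \mathcal{Y}^{(\rm a)}_s)_{s\ge0}$. It has the same finite-dimensional distributions, and a continuous process's law on $\mathcal{C}([0,\infty),E')$ is determined by these. Hence it is an $\text{ooSBM}(4,c,\Tilde{c})$.

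\textbf{Step 4: the damped process.} For $(\Tilde{\mathcal{Z}}_s)_{s\ge0}$ the argument is identical. Proposition \ref{proposition damped sbm} gives the finite-dimensional distributions of $\text{DSBM}(4,c)$ from $\mu_{\rm a}$ and states that this superprocess has continuous sample paths. Alternatively, the same vanishing-jump argument applies to the rescaled $\text{BBM}(2/\varepsilon, 2/\varepsilon + c)$ of Corollary \ref{corollary bbm extra killing}, or one can invoke \cite[Chapter II]{le1999spatial}. Applying Proposition \ref{proposition ooSBM continuous modification} yields $(\Tilde{\mathcal{Y}}_s)_{s\ge0}$.

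Finally, $\mathcal{Y}_s$ is simply defined as the sum of the two components. It is continuous because addition $\mathcal{M}_F(\mathbb{R}^d)^2 \to \mathcal{M}_F(\mathbb{R}^d)$ is continuous for weak convergence. No further proof is needed.
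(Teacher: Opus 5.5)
Your proposal is correct and follows essentially the argument the paper intends. The paper states the corollary without proof directly after Proposition \ref{proposition ooSBM continuous modification}: combine the finite-dimensional identifications from Theorem \ref{theorem ooSBM ffd convergence} and Proposition \ref{proposition damped sbm} with path-continuity of $\text{ooSBM}(4,c,\Tilde{c})$ and $\text{DSBM}(4,c)$, then apply that proposition. Your vanishing-jump argument for the limit in Proposition \ref{proposition existence of ooSBM} adds something: jumps of $\varepsilon\mathcal{Z}^{\varepsilon}$ are $O(\varepsilon)$ in the summed Prokhorov metric (at most $2\varepsilon$ when a particle switches state), and the standard maximal-jump criterion for Skorokhod limits then gives continuity. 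This supplies the continuity of ooSBM that the paper takes for granted from \cite{blath2023off}.
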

\begin{proposition}\label{proposition closure of range of continuous process}
    Let $(\mathcal{Q}_t)_{t \geq 0}$ be $\mathcal{M}_F(E)$-valued function that is continuous with respect to the Prokhorov metric. Then we have for all $t \geq 0$ that
    \begin{equation*}
        \overline{\bigcup_{0 \leq s \leq t} \text{supp}(\mathcal{Q}_s)} = \overline{\bigcup_{s \in \mathbb{Q}, 0 \leq s \leq t} \text{supp}(\mathcal{Q}_s)}.
    \end{equation*}
\end{proposition}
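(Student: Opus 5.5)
We prove the two inclusions between the closed sets
\begin{equation*}
A \coloneqq \overline{\bigcup_{0 \leq s \leq t} \text{supp}(\mathcal{Q}_s)}, \qquad B \coloneqq \overline{\bigcup_{s \in \mathbb{Q}, 0 \leq s \leq t} \text{supp}(\mathcal{Q}_s)}.
\end{equation*}
Throughout we read $s \in \mathbb{Q}$ as $s \in (\mathbb{Q} \cap [0,t]) \cup \{t\}$, so that $t$ itself is included when $t$ is irrational. Since $B$ is the closure of a union over a subfamily of the times used for $A$, we have $B \subseteq A$. For the reverse inclusion it suffices to show that $\text{supp}(\mathcal{Q}_s) \subseteq B$ for every $s \in [0,t]$, because $B$ is closed and $A$ is the closure of the union of these sets.

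Fix $s \in [0,t]$ and $x \in \text{supp}(\mathcal{Q}_s)$. Choose rationals $s_n \in [0,t]$ with $s_n \to s$; if $s = t$ is irrational, approach from below or simply take $s_n = t$. By continuity in the Prokhorov metric, $\mathcal{Q}_{s_n} \to \mathcal{Q}_s$ weakly. Now fix $\delta > 0$ and let $U$ be the open ball of radius $\delta$ about $x$. Since $x$ lies in the support, $\mathcal{Q}_s(U) > 0$. The portmanteau theorem for weak convergence of finite measures, which holds because total masses converge, gives
\begin{equation*}
\liminf_{n \to \infty} \mathcal{Q}_{s_n}(U) \geq \mathcal{Q}_s(U) > 0.
\end{equation*}
Hence for some $n$ we have $\mathcal{Q}_{s_n}(U) > 0$. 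A measure that charges an open set has a support point in that set, since otherwise the open set would lie in the complement of the support, which is null. So $U$ contains a point of $\text{supp}(\mathcal{Q}_{s_n}) \subseteq B$. Because $\delta > 0$ was arbitrary, $x \in \overline{B} = B$.

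The argument is routine. The only points needing care are the following.

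\emph{Portmanteau for finite measures.} Weak convergence of finite measures under the Prokhorov metric includes convergence of total mass. The $\liminf$ inequality on open sets then follows from the Prokhorov definition directly: apply it to the closed set $U^c$ with $\varepsilon$ small.

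\emph{Support points in charged sets.} If $\mu(U) > 0$ for an open $U$, then $U \cap \text{supp}(\mu) \neq \emptyset$. This follows from the characterisation of the support recalled after Definition~\ref{definition support and range}.

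\emph{Irrational endpoint.} When $t$ is irrational, we either include the endpoint $t$ in the rational index set, as above, or approximate $t$ from the left by rationals; either way the sequence argument goes through.
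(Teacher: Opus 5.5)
Your proof is correct and follows essentially the same route as the paper's sketch. You take an open ball around a support point $x$ of $\mathcal{Q}_s$, use the portmanteau $\liminf$ bound along rational times $s_n \to s$ to find a rational time whose measure charges the ball (hence a support point in the ball), and let the radius shrink. Enlarging the index set to include $t$ is unnecessary: as you also note, approximating an irrational $t>0$ from the left by rationals in $[0,t]$ already proves the statement as written.
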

The idea behind the proof for Proposition \ref{proposition closure of range of continuous process} is that if $x \in \text{supp}(\mathcal{Q}_s)$ for some $s \geq 0$ then for every open set $U$ with $x \in U$ we have $\mathcal{Q}_s(U)>0$. By the continuity of $(\mathcal{Q}_t, t \geq 0)$ for a sequence of rationals $t_n \rightarrow s$ we have $\liminf_{n \rightarrow \infty} \mathcal{Q}_{t_n}(U) \geq \mathcal{Q}_s(U)>0$ by the Portmanteau Theorem, see e.g. Kallenberg \cite{KallenbergOlav2021Fomp}. By choosing the open sets $U$ containing open balls of size $\varepsilon$ and take $\varepsilon$ to $0$ we can show that there exists a sequence of points $x_k \in \bigcup_{s \in \mathbb{Q}, 0 \leq s \leq t} \text{supp}(\mathcal{Q}_s)$ such that $x_k \rightarrow x$.
\begin{lemma}\label{lemma support sbm interval}
    Let $(\mathcal{Y}^{(\rm d)}_s, \mathcal{Y}^{(\rm a)}_s)_{s \geq 0}$ be the $\text{ooSBM}(4, c, \Tilde{c})$ defined in Corollary \ref{corollary continuous modifications} and let $\delta > 0$. Then,
    \newline
    $\mathbb{P}_{(\mu_{\rm d}, \mu_{\rm a})}$-a.s. we have
    \begin{equation*}
        \overline{\bigcup_{s \geq \delta} \text{supp}(\mathcal{Y}_s)} = \text{supp}(\mu_{\rm d}) \cup \overline{\bigcup_{i \in I} \{\hat{w}^i_r: r \in [0,\sigma(f^i)], f^i_r > H^i_r(\delta)\}}.
    \end{equation*}
    \begin{proof}
        We first note from Proposition \ref{proposition closure of range of continuous process} that, for a continuous measure-valued process, the closure of the union of the supports over the rational points is equal to the closure of the union over all points strictly greater than $\delta$ and therefore we have
        \begin{equation*}
            \overline{\bigcup_{s \geq \delta} \text{supp}(\mathcal{Y}_s)} = \overline{\bigcup_{s > \delta, s \in \mathbb{Q}} \text{supp}(\mathcal{Z}_s)} \hspace{0.5cm} \text{a.s.}
        \end{equation*}
        by Corollary \ref{corollary continuous modifications} and Proposition \ref{proposition closure of range of continuous process}. By Lemma \ref{lemma supports sbm and oosbm} this is equal to the support of $\mu_{\rm d}$ and
        \begin{equation}\label{equation support greater than delta}
             \overline{\bigcup_{i \in I} \bigcup_{s > \delta, s \in \mathbb{Q}} \{\hat{w}^i_{T^{1/m}_{s,n,i}}: m \in \mathbb{N}, 1 \leq n \leq M_i^{1/m}\}} = \overline{\bigcup_{i \in I} \bigcup_{s \in \{\delta\} \cup (\mathbb{Q}\cap(\delta, \infty))} \{\hat{w}^i_{T^{1/m}_{s,n,i}}: m \in \mathbb{N}, 1 \leq n \leq M_i^{1/m}\}}
        \end{equation}
        a.s., where, we have again used Proposition \ref{proposition closure of range of continuous process} that the inclusion of the level $\delta$ in the union does not change the closure of the set of points covered. We work under this a.s. event for the rest of the proof. To show the first result, we first show that the left-hand-side of \eqref{equation support greater than delta} is a subset of 
        \begin{equation}\label{equation closure head snake range to infinity}
            \overline{\bigcup_{i \in I} \{\hat{w}^i_r: r \in [0,\sigma(f^i)], f^i_r > H^i_r(\delta)\}}.
        \end{equation}
        Let $i \in I$, $s>\delta$, $s \in \mathbb{Q}$, let $m \in \mathbb{N}$, and let $1 \leq n \leq M^{1/m}_i$. If $H^i_{T^{1/m}_{s, n, i}}(\delta)<f^i_{T^{1/m}_{s, n, i}}$ we have $\hat{w}^i_{T^{1/m}_{s, n, i}}$ is in \eqref{equation closure head snake range to infinity} and so we are done. Otherwise, suppose $H^i_{T^{1/m}_{s, n, i}}(\delta) = f^i_{T^{1/m}_{s, n, i}}$. As we also have that $T^{1/m}_{s, n, i}$ is a downcrossing time to level $s$, for the fixed $s>\delta$, this means that a dormancy event has caused the stopping at level $s$ for this downcrossing. Specifically, for the subordinator component $v^i$ of the snake, we have that $v^i_{S^{1/m}_{s, n, i}}(f^i_{T^{1/m}_{s, n, i}}) - v^i_{S^{1/m}_{s, n, i}}(f^i_{T^{1/m}_{s, n, i}}-) \geq s - \delta > 0$. Therefore we have that for $r \in [S^{1/m}_{s, n, i}, T^{1/m}_{s, n, i})$ that $f^i_r>H_r^i(\delta)$ because for all $r \in [S_{s,n,i}^{1/m}, T_{s,n,i}^{1/m})$ we have that $f_r^i > H_r^i(s)$. and $H_r^i(s) \geq H_r^i(\delta)$, since $s > \delta$ and for fixed $r$ the function $t \mapsto H_r(t)$ is increasing. Therefore the points $\hat{w}^i_r$, $r \in [S^{1/m}_{s, n, i}, T^{1/m}_{s, n, i})$, are in \eqref{equation closure head snake range to infinity}. As \eqref{equation closure head snake range to infinity} is a closed set and as the head of the snake is continuous we therefore have that $\hat{w}^i_{T^{1/m}_{s, n, i}}$ is in \eqref{equation closure head snake range to infinity} as required.  
        \par
        We now show for each $i \in I$ that the set 
        \begin{equation}\label{equation Hi grater than delta}
            \{\hat{w}^i_r: r \in [0,\sigma(f^i)], f^i_r > H^i_r(\delta)\}
        \end{equation}
        is a subset of the right-hand-side of \eqref{equation support greater than delta}. As the set $\{r \in [0,\sigma(f^i)], f^i_r > H^i_r(\delta)\}$ is open (see Lemma \ref{lemma age process properties}) it suffices to show that for a dense subset $D$ of $\{r \in [0,\sigma(f^i)], f^i_r > H^i_r(\delta)\}$, the points $\hat{w}^i_r$, $r \in D$, are in the right-hand-side of \eqref{equation support greater than delta}. We note that, for all $i \in I$, the local minimisers of the excursions $f^i$ are a.s. all strict local minimisers, that is to say, for $i \in I$, that if $x$ is a local minima then there exists $\varepsilon>0$ such that for all $y \in (x-\varepsilon, x+\varepsilon) \backslash \{x\}$ we have $f^i_y>f^i_x$ (i.e. a strict inequality rather than a weak inequality). We also have that, for all $i \in I$, the strict local minimisers form dense countable sets of $[0,\sigma(f^i)]$ a.s., see e.g. \cite[Result 2.9.12]{karatzas2012brownian}. We work under the intersection of these a.s. events for the remainder of the proof. 
        \par
        Under these a.s. events, it suffices to consider $i \in I$, $r \in [0, \sigma(f^i)]$, such that $f^i_r>H^i_r(\delta)$ and $r$ is a strict local minimiser of the excursion $f^i$. Let $r$ be such a point and let $u \coloneqq \inf \{u':H^i_r(u') = f^i_r \}$. Then as $(H^i_r(u'))_{u' \geq 0}$ is continuous and $H^i_r(\delta)<f^i_r$ we have $u > \delta$. We also have that as $f^i_r$ is a strict local minima of $f^i$ that $r$ must be a downcrossing time to level $u$ of the age process, i.e.\ there exists $k \in \mathbb{N}$, $1 \leq n \leq M_i^{1/k}$ such that $r = T^{1/k}_{u, n, i}$. If $u \in \mathbb{Q}$ then we have that $\hat{w}^i_r$ is in the right-hand-side of \eqref{equation support greater than delta}.
        \par
        Otherwise, we use the fact that $r$ is a strict local minimiser. Let $\varepsilon > 0$ such that $f^i_{r'}>f^i_r$ for all $r' \in [r-\varepsilon, r)$ and let $a = f^i_{r-\varepsilon}-f^i_r$. Let $u_{k'}$, $k' \in \mathbb{N}$ be a sequence such that $u_{k'} \downarrow u$, $u_{k'} \in \mathbb{Q} \cap (u, u+a)$. Then we can define the stopping times $\tau_{k'}$, $k' \in \mathbb{N}$ by $\tau_{k'} = \inf\{r'>S^{1/k}_{u, n, i}: \text{Res}^i_{r'}(u_{k'}) = 0\}$. We have that these $\tau_{k'}$ are the endpoints for downcrossings to levels $u_{k'}$ of the age process. Therefore the $\hat{w}^i_{\tau_{k'}}$, $k' \in \mathbb{N}$, are in the right-hand-side of \eqref{equation support greater than delta}. We also have that $\tau_{k'} \rightarrow r$ (as $r$ is the strict local minimiser there can be no other limit point for the sequence $\tau_{k'}$, $k' \in \mathbb{N}$) and so by the continuity of the head of the snake $w^i$ we have that $\hat{w}^i_r$ is in the right-hand-side of \eqref{equation support greater than delta}.
    \end{proof}
\end{lemma}
\begin{lemma}\label{lemma support sbm interval 2}
    Let $\delta >0$. In the same setting as Lemma \ref{lemma support sbm interval}, we have that
        \begin{equation*}
        \overline{\bigcup_{0 \leq s \leq \delta} \text{supp}(\mathcal{Y}_s)} = \text{supp}(\mu_{\rm d}) \cup \overline{\bigcup_{i \in I} \{\hat{w}^i_r: r \in [0,\sigma(f^i)], f^i_r = H^i_r(\delta)\}} \hspace{0.5cm} \mathbb{P}_{(\mu_{\rm d}, \mu_{\rm a})}\text{-a.s.}
    \end{equation*}
    \begin{proof}
        For $i \in I$ recall the measures $\mathcal{Z}_{s, i}$ from \eqref{equation components of total measure oosbm}. We first note that
        \begin{equation*}
            \overline{\bigcup_{0 \leq s \leq \delta} \text{supp}(\mathcal{Y}_s)} = \overline{\bigcup_{s \in (0,\delta) \cap \mathbb{Q}} \text{supp}(\mathcal{Z}_s)} = \text{supp}(\mu_d) \cup \overline{\bigcup_{i \in I} \bigcup_{s \in (0,\delta) \cap \mathbb{Q}} \text{supp}(\mathcal{Z}_{s, i})}.
        \end{equation*}
        To prove the result we now show for each $i \in I$ that
        \begin{equation*}
            \overline{\bigcup_{s \in (0,\delta) \cap \mathbb{Q}} \text{supp}(\mathcal{Z}_{s, i})} = \overline{\{\hat{w}^i_r: r \in [0,\sigma(f^i)], f^i_r = H^i_r(\delta)\}}.
        \end{equation*}
        The first inclusion follows from the proof of Lemma \ref{lemma supports sbm and oosbm} which states that 
        \begin{equation*}
            \text{supp} (\mathcal{Z}_{s,i}) = \overline{\{\hat{w}^i_{T^{1/m}_{s,n,i}}: m \in \mathbb{N}, 1 \leq n \leq M_i^{1/m}\}}
        \end{equation*}
        and we note that for all $s<\delta$, $s \in \mathbb{Q}$, $m \in \mathbb{N}, 1 \leq n \leq M_i^{1/m}$, we have that $H^i_{T^{1/m}_{s, n, i}}(\delta)=f^i_{T^{1/m}_{s, n, i}}$, since $T_{s,n,i}^{1/m}$ is a downcrossing time. From this, it follows that
        \begin{equation*}
            \bigcup_{s \in (0,\delta) \cap \mathbb{Q}} \text{supp}(\mathcal{Z}_{s, i}) \subseteq \overline{\{\hat{w}^i_r: r \in [0,\sigma(f^i)], f^i_r = H^i_r(\delta)\}}.
        \end{equation*}
        It remains to show the reverse inclusion. We consider
        \begin{equation*}
            \sum_{i \in I} \delta_{((b_i,s_i), f^i, x^{i, \delta})}
        \end{equation*}
        where $I$, $(b_i, s_i)$, $f^i$ are the same as for the original Brownian snake defined in \eqref{equation snake poisson random measure} and where $x^{i, \delta}$, $i \in I$, are defined as follows. For $i \in I$, we let $x^{i, \delta} = (w^{i, \delta}, v^i)$ where $v^i$ is the usual subordinator component for $x^i=(w^i, v^i)$, and where for $r \in [0,\sigma(f^i)]$, $u \in [0,f^i_r]$, we define $w^{i, \delta}_r(u) = w^i_r(u \wedge H^i_r(\delta))$. It follows that $x^{i, \delta}$ is a bivariate Brownian snake where the second component is a subordinator and the first component is Brownian path stopped at a time determined by the subordinator. We claim that
        \begin{equation*}
            \{\hat{w}^i_r: r \in [0,\sigma(f^i)], f^i_r = H^i_r(\delta)\} = \{\hat{w}^{i, \delta}_r: r \in [0,\sigma(f^i)], f^i_r = H^i_r(\delta)\}.
        \end{equation*}
        This is because for $r \in [0,\sigma(f^i)]$ with $f^i_r = H^i_r(\delta)$ we have that 
        \begin{equation*}
            \hat{w}^i_r = w^i_r(f^i_r) = w^i_r(f^i_r \wedge H^i_r(\delta)) = \hat{w}^{i, \delta}_r. 
        \end{equation*}
        We can also define a measure $\mathcal{Z}^{\delta}_{s, i}$ by
        \begin{equation*}
            \langle \mathcal{Z}^{\delta}_{s, i}, \Phi \rangle = \int_{r=0}^{\sigma(f^i)} \Phi(\hat{w}^{i, \delta}_r) d\ell_{s, i}(r)
        \end{equation*}
        for any Borel-measurable function $\Phi$. Like our original snake, $x^i$, our new snake, $x^{i, \delta}$, has head of the snake process $(\hat{w}^{i, \delta}_r)_{0 \leq r \leq \sigma(f^i)}$ which is continuous. We can apply the results of Lemmas \ref{lemma supports sbm and oosbm} and \ref{lemma support sbm interval} to $(\mathcal{Z}^{\delta}_{s, i}, s \geq 0)$. Specifically, we can let $(\mathcal{Y}^{\delta}_{s, i})_{s \geq 0}$ be a continuous modification of $(\mathcal{Z}^{\delta}_{s, i})_{s \geq 0}$ and note that for any $a>0$
        \begin{equation*}
            \overline{\bigcup_{s \geq a} \text{supp}(\mathcal{Y}^{\delta}_{s, i})} = \overline{\{\hat{w}^{i, \delta}_r: r \in [0,\sigma(f^i)], f^i_r > H^i_r(a)\}}.
        \end{equation*}
        Taking $a$ to $0$ we see that
        \begin{equation*}
            \overline{\bigcup_{s \geq 0} \text{supp}(\mathcal{Y}^{\delta}_{s, i})} = \overline{\{\hat{w}^{i, \delta}_r: r \in [0,\sigma(f^i)]\}}.
        \end{equation*}
        Now, we claim that, for $s\leq \delta$, $s \in \mathbb{Q}$ that $\text{supp}\mathcal{Z}^{\delta}_{s, i} = \text{supp}(\mathcal{Z}_{s, i})$. This is because, by Lemma \ref{lemma supports sbm and oosbm}, we have that 
        \begin{equation*}
            \text{supp}(\mathcal{Z}^{\delta}_{s, i}) = \overline{\{\hat{w}^{i, \delta}_{T^{1/m}_{s,n,i}}: m \in \mathbb{N}, 1 \leq n \leq M_i^{1/m}\}}
        \end{equation*}
        and as $T^{1/m}_{s,n,i}$ are downcrossing times for level $s$ we have that $\hat{w}^{i, \delta}_{T^{1/m}_{s,n,i}} = \hat{w}^i_{T^{1/m}_{s,n,i}}$ for all $n$, $m$, $s \in \mathbb{Q}$, $s \leq \delta$. Therefore we have that
        \begin{equation*}
            \overline{\bigcup_{0 \leq s \leq \delta} \text{supp}(\mathcal{Y}_{s, i})} = \overline{\bigcup_{0 \leq s \leq \delta} \text{supp}(\mathcal{Y}^{\delta}_{s, i})}.
        \end{equation*}
        Now, let $x \in \{\hat{w}^i_r: r \in [0,\sigma(f^i)], f^i_r = H^i_r(\delta)\}$. Then there exists $r \in [0,\sigma(f^i)]$ such that $x = \hat{w}^i_r = \hat{w}^{i, \delta}_r$. Therefore $x \in \overline{\bigcup_{s \geq 0} \text{supp}(\mathcal{Y}^{\delta}_{s, i})}$. If $x \in \text{supp}(\mathcal{Y}^{\delta}_{s, i})$ for some $s \leq \delta$ then we are done. Otherwise, we claim that 
        \begin{equation*}
            \overline{\bigcup_{s \geq \delta} \text{supp}(\mathcal{Y}^{\delta}_{s, i})} = \text{supp}(\mathcal{Y}^{\delta}_{\delta, i})
        \end{equation*}
        and from this the result follows. Now, following the same proof as Lemma \ref{lemma support sbm interval}, we have that 
        \begin{equation*}
            {\bigcup_{s \geq \delta} \text{supp}(\mathcal{Y}^{\delta}_{s, i})} = \overline{\{\hat{w}^{i, \delta}_r: r \in [0,\sigma(f^i)], f^i_r > H^i_r(\delta)\}}.
        \end{equation*}
        Let, $r \in [0,\sigma(f^i)]$ with $f^i_r > H^i_r(\delta)$. Let $T = \inf\{r' > r: f^i_{r'} = H^i_{r'}(\delta)\}$. Then it follows that $T = T^{1/m}_{\delta, n, i}$ for some $m$, $n$. This means that $\hat{w}^{i, \delta}_T \in \text{supp}(\mathcal{Y}^{\delta}_{\delta, i})$. We furthermore have that $H^i_{r'}(\delta) = H^i_r(\delta)$ and that $\hat{w}^{i, \delta}_r = \hat{w}^{i, \delta}_T$. Therefore $x \in \text{supp}(\mathcal{Y}^{\delta}_{\delta, i})$ as required.
    \end{proof}
\end{lemma}
We now state consequence of Lemma \ref{lemma support sbm interval}.
\begin{corollary}\label{corollary range sbm in range oosbm}
    Let $(\mathcal{X}_y)_{y \geq 0}$ be the $\text{SBM}(4)$ defined in Theorem \ref{theorem sbm snake} and let $(\mathcal{Y}^{(\rm d)}_s, \mathcal{Y}^{(\rm a)}_s)_{s \geq 0}$ be the $\text{ooSBM}(4, c, \Tilde{c})$ (defined in Corollary \ref{corollary continuous modifications}) which is the continuous modification of the process $(\mathcal{Z}^{(\rm d)}_s, \mathcal{Z}^{(\rm a)}_s)_{s \geq 0}$ (defined in Theorem \ref{theorem ooSBM ffd convergence}). Note that both $(\mathcal{X}_y)_{y \geq 0}$ and $(\mathcal{Y}^{(\rm d)}_s, \mathcal{Y}^{(\rm a)}_s)_{s \geq 0}$ are constructed on the same probability space $(\Omega, \mathcal{F}, \mathbb{P}_{(\mu_{\rm d},\mu_{\rm a})})$. As above let $\mathcal{Y}_s = \mathcal{Y}^{(\rm d)}_s \cup \mathcal{Y}^{(\rm a)}_s$ for $s \geq 0$. Let $\delta > 0$. Then we have $\mathbb{P}_{(\mu_{\rm d}, \mu_{\rm a})}$-a.s. that
    \begin{equation}\label{equation coupled domination}
        \overline{\bigcup_{y \geq \delta}\text{supp}(\mathcal{X}_y)} =  \overline{\bigcup_{i \in I} \{\hat{w}^i_t: f^i_t > \delta\}} \subseteq \overline{\bigcup_{i \in I} \{ \hat{w}^i_t: f^i_t > H^i_t(\delta)\}} = \overline{\bigcup_{s \geq \delta} \text{supp}(\mathcal{Y}_s)}.
    \end{equation}
    Secondly we have $\mathbb{P}_{(\mu_{\rm d}, \mu_{\rm a})}$-a.s. that 
    \begin{equation*}
        \mathcal{R}(\mathcal{X}) \subseteq \mathcal{R}(\mathcal{Y}^{(\rm d)}) \cup \mathcal{R}(\mathcal{Y}^{(\rm a)}).
    \end{equation*}
    \begin{proof}
        The first equality in \eqref{equation coupled domination} follows from the snake representation in Theorem \ref{theorem sbm snake} and the result in Proposition \ref{proposition closure of range of continuous process} means we can exclude the case where $f^i_t = \delta$. The second equality follows by Lemma \ref{lemma support sbm interval}. The subset relation follows by the fact that for $z \geq 0$, $i \in I$, $r \in [0,\sigma(f^i)]$ we have $H^i_r(z) \leq z$. 
        \par
        The second result follows from the fact that the range can be written as a countable union, e.g.
        \newline
        $\mathcal{R}(\mathcal{X}) = \bigcup_{\delta>0, \delta \in \mathbb{Q}}\overline{\bigcup_{y \geq \delta}\text{supp}(\mathcal{X}_y)}$ and similarly for $\mathcal{Y}$ and so we can apply the first result countably many times to obtain the second result here.
    \end{proof}
\end{corollary}
The next few results work in the same setting as Corollary \ref{corollary range sbm in range oosbm}.
\begin{proposition}\label{proposition range oosbm inclusion}
    In the setting of Corollary \ref{corollary range sbm in range oosbm}, we have, for each $\delta>0$, $\mathbb{P}_{(\mu_{\rm d}, \mu_{\rm a})}$-a.s. that
    \begin{equation*}
        \overline{\bigcup_{s \geq \delta} \text{supp}(\mathcal{Y}_s)} \subseteq \mathcal{R}(\mathcal{X}) \cup \text{supp}(\mu_{\rm d}) \cup \text{supp}(\mu_{\rm a}).
    \end{equation*}
    and therefore we have $\mathbb{P}_{(\mu_{\rm d}, \mu_{\rm a})}$-a.s.
    \begin{equation*}
        \mathcal{R}(\mathcal{Y}^{(\rm d)}) \cup \mathcal{R}(\mathcal{Y}^{(\rm a)}) = \mathcal{R}(\mathcal{Y}) \subseteq \mathcal{R}(\mathcal{X}) \cup \text{supp}(\mu_{\rm d}) \cup \text{supp}(\mu_{\rm a}).
    \end{equation*}
    \begin{proof}
        Consider the representation of $\overline{\bigcup_{s \geq \delta} \text{supp}(\mathcal{Y}_s)}$ given in Lemma \ref{lemma support sbm interval}, and let $i \in I$, $r \in [0,\sigma(f^i)]$, $s > 0$. We have that there exists $q \in \mathbb{Q}$ such that $H^i_r(s)>q>0$. Therefore, as can be seen from the proof of Corollary \ref{corollary range sbm in range oosbm}, we have that $\hat{w}^i_r \in \overline{\bigcup_{y \geq q} \text{supp}(\mathcal{X}_y)}$. We can repeat the process for a countable dense set of points $\hat{w}^i_r$ in $\overline{\bigcup_{s \geq \delta} \text{supp}(\mathcal{Y}_s)}$. Then, by taking the countable union over $q$ for $q \in \mathbb{Q}$ we can conclude that
        \begin{equation*}
            \overline{\bigcup_{s \geq \delta} \text{supp}(\mathcal{Y}_s)} \subseteq \overline{\mathcal{R}(\mathcal{X})} \hspace{0.5cm} \mathbb{P}_{(\mu_{\rm d}, \mu_{\rm a})}-\text{a.s.}
        \end{equation*}
        Therefore, taking a countable union over $\delta$ for $\delta \in \mathbb{Q}$ gives us that 
        \begin{equation*}
            \mathcal{R}(\mathcal{Y}) = \bigcup_{\delta >0}  \overline{\bigcup_{s \geq \delta} \text{supp}(\mathcal{Y}_s)}  = \bigcup_{\delta >0, \delta \in \mathbb{Q}} \overline{\bigcup_{s \geq \delta} \text{supp}(\mathcal{Y}_s)} \subseteq  \overline{\mathcal{R}(\mathcal{X})}.
        \end{equation*}
        The result then follows as $\overline{\mathcal{R}(\mathcal{X})} \subseteq \mathcal{R}(\mathcal{X}) \cup \text{supp}(\mu_{\rm d}) \cup \text{supp}(\mu_{\rm a})$. To see this first note that for each $i \in I$ the set $\{\hat{w}^i_t, t \in [0,\sigma(f^i)]\}$ is a closed set. However, as only finitely many excursions $f^i$ exceed height $y$ for any $y>0$ we have that any limit point outside of $\mathcal{R}(\mathcal{X})$ must be a limit point of the points $\{b_i, i \in I\}$ and therefore must be in the support of $\mu_{\rm d}$ or of $\mu_{\rm a}$ (this is noted in the remark after \cite[Corollary IV.9]{le1999spatial} and it is observation that motivates for their definition for $\mathcal{R}$ that we are also using here in this paper).
    \end{proof}
\end{proposition}
\begin{proposition}\label{proposition inclusion at time 0}
    In the setting of Corollary \ref{corollary range sbm in range oosbm}, for any $s>0$ we have $\mathbb{P}_{(\mu_{\rm d}, \mu_{\rm a})}$-a.s. that
    \begin{equation*}
        \text{supp}(\mu_{\rm a}) \subseteq \text{supp}(\mathcal{Y}_s).
    \end{equation*}
    \begin{proof}
        We consider the on/off Brownian snake given in Definition \ref{definition oo snake} on probability space $(\Omega, \mathcal{F}, \mathbb{P}_{(\mu_{\rm d}, \mu_{\rm a})})$. We assume that $\mu_{\rm a}$ is a non-zero measure otherwise the proposition is trivial.
        \par
        Let $x \in \text{supp}(\mu_{\rm a})$ and let $\varepsilon>0$. Let $B(x, \varepsilon/2)$ denote the ball of radius $\varepsilon/2$ and centre $x$, i.e. we have $B(x, \varepsilon/2) = \{y \in \mathbb{R}^d: \norm{y - x} < \varepsilon/2\}$. Then we have $\mu_{\rm a}(B(x, \varepsilon/2))>0$. Recall, for $y>0$ and an excursion $f$, that we define the hitting time of $y$ to be $T_y(f) = \inf\{t, f_t = y\}$ (and we set $T_y(f)$ to be infinity if $f$ does not hit $y$). Our aim is to partition the set $I$ into a collection of disjoint sets. For $k \in \mathbb{N}$ let $I_k$ be defined by
        \begin{equation*}
            I_k \coloneqq \{ i  \in I: \hspace{0.25cm} T_{1/k}(f^i) < \infty = T_{1/(k-1)}(f^i)\}.
        \end{equation*}
        We note that for $k \in \mathbb{N}$ that $n(\{f: T_{1/k}(f) < \infty = T_{1/(k-1)}(f)\}) = 1/2$. As $\sum_{i \in I}\delta_{f^i}$ is a Poisson random measure with intensity measure $(\norm{\mu_{\rm d}} + \norm{\mu_{\rm a}})n(\cdot)$ and the sets $I_k$ are disjoint, we have that $|I_k|$, $k \in \mathbb{N}$, are independent Poisson random variables each with mean $(\norm{\mu_{\rm d}} + \norm{\mu_{\rm a}})/2$.
        \par
        For a compound Poisson subordinator with drift $S$, let $\tau(S) = \inf\{u: S_{u-} \neq S_u\}$ be the time of the first discontinuity of $S$ and let $\Delta(S) = S_{\tau(S)} - S_{\tau(S)-}$ be the size of this discontinuity. We can then consider the sets $J_k$, $k \in \mathbb{N}$ given by
        \begin{equation*}
            J_k = \Big\{ i \in I_k: \hspace{0.25cm} s_i = 0, \hspace{0.25cm} \norm{b_i - x} < \frac{\varepsilon}{2}, \hspace{0.25cm} \tau(v^i_{T_{1/k}}) \in (0,1/2k), \hspace{0.25cm} \Delta(v^i_{T_{1/k}})>s \Big\}.
        \end{equation*}
        By the independence of the marking we have that $|J_k|$, $k \in \mathbb{N}$, are independent Poisson random variables with means given by
        \begin{equation*}
            \mathbb{E}_{(\mu_{\rm d}, \mu_{\rm a})}(|J_k|) = \frac{1}{2}\mu_{\rm a}(B(x, \varepsilon/2))(1-e^{-c/2k})(e^{-\Tilde{c}s})
            \geq \frac{c}{4k}\mu_{\rm a}(B(x, \varepsilon/2))e^{-\Tilde{c}s}e^{-c/2},
        \end{equation*}
        where, for the inequality, we have used the fact that for $z<c/2$ that $1-e^{-z} \geq ze^{-c/2}$. We consider subsets of the sets $J_k$, $k \in \mathbb{N}$, that relate to the following probabilities. For a $d$-dimensional Brownian motion $(W(t))_{t \geq 0}$ started at the origin on probability space $(\Omega, \mathcal{F}, P)$, for $k \in \mathbb{N}$, we have
        \begin{equation*}
            P\Big(\sup_{0 \leq s \leq 1/2k} \norm{W(s)} \leq \varepsilon/2\Big) \geq P\Big(\sup_{0 \leq s \leq 1/2} \norm{W(s)} \leq \varepsilon/2\Big) > 0.
        \end{equation*}
        We now consider a further restriction of the sets $J_k$, $k \in \mathbb{N}$, given by 
        \begin{equation*}
            J'_k \coloneqq \Big\{i \in J_k: \hspace{0.25cm} \sup_{u \in [0, 1/k]}\big\{\norm{w^i_{T_{1/k}}(u) - x}\big\} < \varepsilon\Big\}.
        \end{equation*}
        We again note that by the independence of the marking that the $|J'_k|$, $k \in \mathbb{N}$, are independent Poisson random variables with means that satisfy
        \begin{equation*}
            \mathbb{E}_{(\mu_{\rm d}, \mu_{\rm a})}(|J'_k|) \geq P\Big(\sup_{0 \leq s \leq 1/2k} \norm{W(s)} \leq \varepsilon/2\Big)\mathbb{E}_{(\mu_{\rm d}, \mu_{\rm a})}(|J_k|) \geq P\Big(\sup_{0 \leq s \leq 1/2} \norm{W(s)} \leq \varepsilon/2\Big)\mathbb{E}_{(\mu_{\rm d}, \mu_{\rm a})}(|J_k|)
        \end{equation*}
        for $k \in \mathbb{N}$. This follows as the starting point for the spatial Brownian motion component is $w^i_{T_{1/k}}(0) = b_i$ and that $\norm{x-b_i} <\varepsilon/2$ for $i \in J_k$, $k \in \mathbb{N}$. Now, we have that 
        \begin{equation*}
            \mathbb{P}_{(\mu_{\rm d}, \mu_{\rm a})}(|J'_k| \geq 1) \geq 1 - \exp(-(\mathbb{E}_{(\mu_{\rm d}, \mu_{\rm a})}(|J'_k|) \wedge 1)) \geq (1-e^{-1}) \wedge (e^{-1}\mathbb{E}_{(\mu_{\rm d}, \mu_{\rm a})}(|J'_k|))
        \end{equation*}
        (because for $x<1$ we have that $1-e^{-x} \geq e^{-1}x$). Therefore as $\mathbb{P}_{(\mu_{\rm d}, \mu_{\rm a})}(|J'_k| \geq 1)$, $k \in \mathbb{N}$, are not summable and the $|J'_k|$ are independent we have, by the Borel-Cantelli lemma, that $\{|J'_k| \geq 1\}$ occurs infinitely often $\mathbb{P}_{(\mu_{\rm d}, \mu_{\rm a})}$-a.s. In particular there exists $i \in I$ and $k \in \mathbb{N}$ such that $i \in J'_k$ a.s. 
        \par
        Let $i \in J'_k$ for some $k \in \mathbb{N}$. Then we have that $H^i_{T_{1/k}}(s)<1/k$ (where $H^i$ is the age process of $x^i$, see Definition \ref{definition age process}). Note that $T_{1/k}(x^i)$ is a stopping time for the Brownian snake $x^i$. Therefore, by the strong Markov property for the Brownian snake we can consider the snake $(x^i_{T_{1/k}(x^i)+t})_{t \geq 0}$.  For this snake we can consider the stopping time given by $T \coloneqq \{r>T_{1/k}(x^i): \text{Res}^i_r(s) = 0\}$. 
        \par
        We have that $T<\infty$ and that $\hat{w}^i_T \in \text{supp}(\mathcal{Y}_s)$ . We also have by the snake property that
        \newline
        $\hat{w}^i_T = w^i_{T_{1/k}(x^i)}(f^i_T)$ and that therefore $\norm{b_i - \hat{w}^i_T}<\varepsilon/2$. As we also have $\norm{b_i - x}<\varepsilon/2$, then $\norm{\hat{w}^i_T - x} < \varepsilon$. Therefore as $\hat{w}^i_T$ is in $\text{supp}(\mathcal{Y}_s)$ (by Lemma \ref{lemma supports sbm and oosbm}, see also Corollary \ref{corollary continuous modifications}), $\varepsilon$ was arbitrary and $\text{supp}(\mathcal{Y}_s)$ is a closed set, we have that $x$ is in $\text{supp}(\mathcal{Y}_s)$ a.s. (as we can take a countable sequence of $\varepsilon$ values that converge to $0$). We can repeat this process for a dense countable subset $A$ of $\text{supp}(\mu_{\rm a})$ and conclude that $A \subseteq \text{supp}(\mathcal{Y}_s)$ a.s. As $\text{supp}(\mathcal{Y}_s)$ is closed we can then conclude that $\text{supp}(\mu_{\rm a}) \subseteq \text{supp}(\mathcal{Y}_s)$ a.s. as required.
    \end{proof}
\end{proposition}
We can now prove Theorem \ref{theorem equality ranges}.
\begin{proof}[Proof of Theorem \ref{theorem equality ranges}]
We first consider the case $\gamma = 4$. In the setting of Corollary \ref{corollary range sbm in range oosbm}, we have $\mathbb{P}_{(\mu_{\rm d}, \mu_{\rm a})}$-a.s. that
\begin{equation*}
    \mathcal{R}(\mathcal{X}) \cup \text{supp}(\mu_{\rm d}) \cup \text{supp}(\mu_{\rm a}) \subseteq \mathcal{R}(\mathcal{Y}^{(\rm d)}) \cup \mathcal{R}(\mathcal{Y}^{(\rm a)})
\end{equation*}
by Corollaries \ref{corollary dormant support} and \ref{corollary range sbm in range oosbm} and by Proposition \ref{proposition inclusion at time 0}. We have the reverse inclusion by Proposition \ref{proposition range oosbm inclusion}. It follows that $\mathcal{R}(\mathcal{Y}^{(\rm d)}) \cup \mathcal{R}(\mathcal{Y}^{(\rm a)})$ is a closed set as $\text{supp}(\mu_{\rm d})$, $\text{supp}(\mu_{\rm a})$ are both closed and the closure of $\mathcal{R}(\mathcal{X})$ is contained in $\mathcal{R}(\mathcal{X}) \cup \text{supp}(\mu_{\rm d}) \cup \text{supp}(\mu_{\rm a})$. 
\par
For general $\gamma>0$ we consider the following modifications to our arguments. In Section \ref{section snake construction}, we replace the spatial motion $\xi=(B,S)$ which consisted of a standard d-dimensional Brownian motion $B$ on $\mathbb{R}^d$ (with mean $0$ and variance $1$) and an independent subordinator $S$. We replace $\xi$ with $\xi'=(B', S')$ where $B'$ is a d-dimensional Brownian motion with mean $0$ and variance $4/\gamma$ and the subordinator $S$ is of the same form as $S$ where we have substituted the parameters $c$, $\Tilde{c}$ with parameters $4c/\gamma$, $4\Tilde{c}/\gamma$. We then rescale time with $s \mapsto \gamma s$ for the process $(\mathcal{Z}_{\gamma s}^{(\varepsilon, \rm d)}, \mathcal{Z}_{\gamma s}^{(\varepsilon, \rm a)})_{s \geq 0}$ defined in Proposition \ref{proposition identification oobmm h erased tree}. The rescaling of time and the parameters results in, for $\varepsilon > 0$, $(\mathcal{Z}_{\gamma s}^{(\varepsilon, \rm d)}, \mathcal{Z}_{\gamma s}^{(\varepsilon, \rm a)})_{s \geq 0}$ having the distribution of $\text{ooBBM}(\gamma/\varepsilon, c, \Tilde{c})$. The same arguments in Theorem \ref{theorem ooSBM ffd convergence} apply and we obtain a process $(\mathcal{Z}_s^{(\rm d)}, \mathcal{Z}_s^{(\rm a)})_{s \geq 0}$ with the finite-dimensional distributions of $\text{ooSBM}(\gamma, c, \Tilde{c})$. Also the same rescaling of parameters and time results in $(\mathcal{X}_y, y \geq 0)$ in Theorem \ref{theorem sbm snake} being an $\text{SBM}(\gamma, c, \Tilde{c})$. The argument follows from here in exactly the same way as the case $\gamma = 4$ and we obtain the range coupling as required.
\end{proof}
We now state the Hausdorff dimension of the range and support of $\text{DSBM}(\gamma, c)$.
\begin{lemma}\label{lemma dimensions damped sbm}
    Let $\mu_{\rm a} \in \mathcal{M}_F(\mathbb{R}^d)$ be a non-zero finite measure and let $\gamma >0$. Consider $\text{DSBM}(\gamma, c)$, $(\Tilde{\mathcal{Y}}_s)_{s \geq 0}$, with initial measure $\Tilde{\mathcal{Y}}_0 = \mu_{\rm a}$. Then the dimension of the range and of the support of $(\Tilde{\mathcal{Y}}_s)_{s \geq 0}$ are those of $\text{SBM}(\gamma)$ given in Proposition \ref{proposition dimension range and support sbm}, i.e. for $s>0$
    \begin{equation*}
        \text{dim}(\text{supp}(\Tilde{\mathcal{Y}}_s)) = 2 \wedge d \hspace{0.25cm} \text{   on   } \{\Tilde{\mathcal{Y}}_s \neq 0\} \hspace{0.25cm} \Tilde{\mathbb{P}}_{\mu_{\rm a}}\text{-a.s.;} \hspace{0.5cm} \text{dim}(\mathcal{R}(\Tilde{\mathcal{Y}})) = 4 \wedge d \hspace{0.25cm} \Tilde{\mathbb{P}}_{\mu_{\rm a}}\text{-a.s.}
    \end{equation*}
    \begin{proof}
        The proof for the upper bounds, i.e. that $\text{dim}(\text{supp}(\Tilde{\mathcal{Y}}_s)) \leq 2 \wedge d$ and $\text{dim}(\mathcal{R}(\Tilde{\mathcal{Y}})) \leq 4 \wedge d$, immediately follows from the fact that $\Tilde{\mathcal{Y}}_s$ is dominated by $\mathcal{X}_s$. The proof for the lower bound for the support uses the same argument as \cite[Theorem IV.7]{le1999spatial} as the following condition is satisfied:
        \begin{equation}\label{equation support inequality lower bound}
            \Tilde{\mathbb{N}}_b\Big(\int_{\norm{y} \leq K} \int_{\norm{z} \leq K} \Tilde{\mathcal{Y}}_t(dy) \Tilde{\mathcal{Y}}_t(dz) |y-z|^{\varepsilon - 2}\Big) < \infty
        \end{equation}
        For the lower bound for the range, we define the total occupation measure $\Tilde{\mathcal{J}}$ of $\Tilde{\mathcal{Y}}$ by 
        \begin{equation*}
            \langle \Tilde{\mathcal{J}}, g \rangle = \int_{s=0}^\infty ds \langle \Tilde{\mathcal{Y}}_s, g \rangle
        \end{equation*}
        which is supported on $\mathcal{R}(\Tilde{\mathcal{Y}})$. The proof for the lower bound for the range then uses the same argument as in \cite[Theorem IV.7]{le1999spatial} as we have that for $d \geq 4$ and for every $K$, $\varepsilon$, $\delta>0$ that
        \begin{equation}\label{equation range inequality lower bound}
            \Tilde{\mathbb{N}}_b\Big(\int_{\varepsilon \leq \norm{y - b} \leq K} \int_{\varepsilon \leq \norm{z - b} \leq K} \frac{\mathcal{J}(dy)\mathcal{J}(dz)}{|y-z|^{4-\delta}}\Big) < \infty,
        \end{equation}
        as well as the fact that $\Tilde{\mathcal{Y}}$ is a non-trivial measure-valued process (i.e. a.s. not equal to the zero-measure for a positive initial interval of time $[0,y]$). Both the inequalities in \eqref{equation support inequality lower bound} and \eqref{equation range inequality lower bound} can be seen from a direct comparison with SBM which has the corresponding inequalities in the proof of \cite[Theorem IV.7]{le1999spatial}.
    \end{proof}
\end{lemma}
Our next result of the paper concerns the increasing support property of ooSBM.
\begin{lemma}[Range domination]\label{lemma oosbm range domination}
    As previously, let $(\mathcal{X}_y)_{y \geq 0}$ be the $\text{SBM}(4)$ defined in Theorem \ref{theorem sbm snake}; let $(\mathcal{Y}^{(\rm d)}_s, \mathcal{Y}^{(\rm a)}_s)_{s \geq 0}$ be the $\text{ooSBM}(4, c, \Tilde{c})$ (defined in Corollary \ref{corollary continuous modifications}) which is the continuous modification of the process $(\mathcal{Z}^{(\rm d)}_s, \mathcal{Z}^{(\rm a)}_s)_{s \geq 0}$ (defined in Theorem \ref{theorem ooSBM ffd convergence}); and let $(\Tilde{\mathcal{Y}}_s)_{s \geq 0}$ be the $\text{DSBM}(4, c)$ defined in Corollary \ref{corollary continuous modifications} which is the continuous modification of the process $(\Tilde{\mathcal{Z}}_s)_{s \geq 0}$ defined in Proposition \ref{proposition damped sbm}. Let $\mathcal{Y}_s = \mathcal{Y}^{(\rm d)}_s + \mathcal{Y}^{(\rm a)}_s$ for $s \geq 0$. Note that all these processes are defined on the same probability space $(\Omega, \mathcal{F}, \mathbb{P}_{(\mu_{\rm d},\mu_{\rm a})})$. Then we have
    \begin{equation*}
        \overline{\bigcup_{0 \leq s \leq t} \text{supp}(\Tilde{\mathcal{Y}}_s)} \subseteq \overline{\bigcup_{0 \leq s \leq t} \text{supp}(\mathcal{Y}_s)} \subseteq \overline{\bigcup_{0 \leq s \leq t} \text{supp}(\mathcal{X}_s)} \hspace{0.5cm} \forall t \geq 0 \hspace{0.25cm} \mathbb{P}_{(\mu_{\rm d}, \mu_{\rm a})}\text{-a.s.}
    \end{equation*}
    \begin{proof}
    Let $t \geq 0$. Firstly by Lemma \ref{lemma supports sbm and oosbm} we have
        \begin{equation*}
            \text{supp}(\Tilde{\mathcal{Y}}_s) = \overline{\bigcup_{i \in I}\{\hat{w}^i_r, r \in [0,\sigma(f^i)], f^i_r = s, s_i = 0, \hat{\Tilde{v}}^i_r=0\}} \hspace{0.25cm} \text{ for all   } s \in \mathbb{Q} \hspace{0.25cm} \mathbb{P}_{(\mu_{\rm d}, \mu_{\rm a})}\text{-a.s.,}
        \end{equation*}
        and therefore by Proposition \ref{proposition closure of range of continuous process} we have that 
        \begin{equation*}
            \overline{\bigcup_{0 \leq s \leq t} \text{supp}(\Tilde{\mathcal{Y}}_s)} = \overline{\bigcup_{i \in I}\bigcup_{0<s<t, s \in \mathbb{Q}}\hat{w}^i_r, r \in [0,\sigma(f^i)], f^i_r = s, s_i = 0, \hat{\Tilde{v}}^i_r=0\}}.
        \end{equation*}
        Now, we note that for $i \in I$, $r \in [0,\sigma(f^i)]$, if $f^i_r = s$ and $\hat{\Tilde{v}}^i_r=0$ then $H^i_r(s) = s$ and so $f^i_r = H^i_r(s)$. Furthermore as $s<t$ we have that $H^i_r(t) = H^i_r(s) = f^i_r$. Therefore, by Lemma \ref{lemma support sbm interval 2}, we have that $\hat{w}^i_r$ is in the support of $\overline{\bigcup_{0 \leq s \leq t}\text{supp}(\mathcal{Y}_s)}$. As this holds for all such $\hat{w}^i_r$ we have the first inclusion of this lemma.
        \par
        For the second inclusion we note that the age process $H^X_r(y) \leq y$ for any $r$ and any snake $X$ by definition of the age process in Definition \ref{definition age process} and so the result follows by Theorem \ref{theorem sbm snake} and Lemma \ref{lemma support sbm interval 2}. We have this result for all $t \in \mathbb{Q}$ simultaneously but we can extend this to all $t \in [0,\infty)$ simultaneously with Proposition \ref{proposition closure of range of continuous process} which shows us we only need to consider the supports over the rational time points.
    \end{proof}
\end{lemma}
\begin{proposition}\label{proposition dimension oosbm}
    Let $\mu_{\rm d}$, $\mu_{\rm a} \in \mathcal{M}_F(\mathbb{R}^d)$ be such that $\mu_{\rm d} + \mu_{\rm a}$ is a non-zero measure and let $(\mathcal{Y}_s^{(\rm d)}, \mathcal{Y}_s^{(\rm a)})_{s \geq 0}$ be an $\text{ooSBM}(4, c, \Tilde{c})$ started from $(\mu_{\rm d}, \mu_{\rm a})$. For any $s \geq 0$ let $\mathcal{Y}_s = \mathcal{Y}_s^{(\rm d)} +  \mathcal{Y}_s^{(\rm a)}$. Then, for any fixed $t \geq 0$, we have
    \begin{equation*}
        \text{dim}\Big( \overline{\bigcup_{0 \leq s \leq t} \text{supp}(\mathcal{Y}_s)} \Big) = \text{dim}(\text{supp}(\mu_{\rm d})) \vee \text{dim}(\text{supp}(\mu_{\rm a})) \vee (4 \wedge d) \hspace{0.5cm} \mathbb{P}_{(\mu_{\rm d}, \mu_{\rm a})}-\text{a.s.}
    \end{equation*}
    \begin{proof}
        This almost follows immediately from Lemma \ref{lemma oosbm range domination} and the dimension results given in Proposition \ref{proposition dimension range and support sbm} and Lemma \ref{lemma dimensions damped sbm}. First assume $\mu_{\rm a}$ is a non-zero finite measure. We wish to apply the dimension results in Proposition \ref{proposition dimension range and support sbm} and Lemma \ref{lemma dimensions damped sbm} for the ranges of the superprocesses to determine the dimension of the closure of the union of the supports from time $\delta$ to $t$, $\text{dim}(\overline{\bigcup_{\delta \leq s \leq t} \text{supp}(\mathcal{Y}_s)})$, for any $\delta > 0$. We note that the proof of the dimension result from \cite[Theorem IV.7]{le1999spatial} which shows that each constituent part of the SBM $\mathcal{X}$ formed by the snakes $(x^i, i \in \Tilde{I})$ has range dimension $4 \wedge d$. Now, some of these clusters a.s. have contour functions $f^i$ with height less than $t$, and the range from time $\delta$ of the component of the superprocess is equal to the closure of the union of supports from times $\delta$ to $t$. Therefore the dimension of the union of the supports is at least $4 \wedge d$. When we include the initial support of $\mathcal{X}_0$ which has dimension $\text{dim}(\text{supp}(\mu_{\rm d})) \vee \text{dim}(\text{supp}(\mu_{\rm a}))$ we have the dimension is at least $(\text{supp}(\mu_{\rm d})) \vee \text{dim}(\text{supp}(\mu_{\rm a})) \vee (4 \wedge d)$. The reverse inclusion is immediate so the dimension is equal to $(\text{supp}(\mu_{\rm d})) \vee \text{dim}(\text{supp}(\mu_{\rm a})) \vee (4 \wedge d)$ as required. 
        \par
        It remains to consider the case where $\mu_{\rm a} = 0$ and therefore that $\mu_{\rm d}$ is a non-zero finite measure. However, in this case we can show that there exists $T \in (0,t/2)$ and $n \in \mathbb{N}$ such that $\mathcal{Y}^{(\rm a)}_T(\mathbb{R}^d)>1/n$ a.s. To see this we recall from \cite[Proposition 1.9]{blath2023off} that the total mass processes $(q_s, p_s)_{s \geq 0} \coloneqq (\mathcal{Y}^{(\rm d)}_s(\mathbb{R}^d), \mathcal{Y}^{(\rm a)}_s(\mathbb{R}^d))_{s \geq 0}$ are governed by the stochastic differential equations
        \begin{equation*}
            dq_t = (cp_t - \Tilde{c}q_t) dt; \hspace{0.5cm} dp_t = (\Tilde{c}q_t - cp_t)dt + \sqrt{\gamma p_t} dB_t. 
        \end{equation*}
        Therefore, we can see that $p_s=0$ for all $s \in (0,t/2)$ leads to a contradiction. We also have that $(q_s)_{s \geq 0}$ and $(p_s)_{s \geq 0}$ are continuous processes (this can be seen for example by the continuity of $(\mathcal{Y}^{(\rm d)}_s)_{s \geq 0}$, $(\mathcal{Y}^{(\rm a)}_s)_{s \geq 0}$ with respect to the Prokhorov metric). Let the time $T_n$ be the first hitting time of $1/n$ for $(p_s)_{s \geq 0}$. Then the result follows for $t>T_n$ by the previous argument for the case where $\mu_{\rm a}$ is non-zero and the strong Markov property. We have a.s. that $t/2>T_n$ for some $n \in \mathbb{N}$ and therefore the result follows.
    \end{proof}
\end{proposition}
In Lemma \ref{lemma support of SBM thickening} we note the result from \cite[Theorem 1.1]{dawson1989super} that concerns the support of SBM. For a set $A \subseteq \mathbb{R}^d$ and for $\varepsilon' \geq 0$, we define the $\varepsilon'$-thickening of $A$ to be the set $A^{\varepsilon'}$ given by
\begin{equation*}
    A^{\varepsilon'} \coloneqq \{ x \in \mathbb{R}^d: |x-y| \leq \varepsilon' \text{ for some   } y \in A\}.
\end{equation*}
\begin{lemma}\label{lemma support of SBM thickening}
    Let $\mu \in \mathcal{M}_F(\mathbb{R}^d)$ and let $\gamma >0$. Let $(\mathcal{X}_r)_{r \geq 0}$ be an $\text{SBM}(\gamma)$ started from $\mu$ on a probability space $(\Omega, \mathcal{F}, \mathbb{P}_\mu)$. For each $a>2$ there $\mathbb{P}_{\mu}$-a.s. exists $\delta>0$ such that if $s$, $t \geq 0$ satisfy $0 \leq t-s<\delta$ then
    \begin{equation*}
        \text{supp}(\mathcal{X}_t) \subseteq \text{supp}(\mathcal{X}_s)^{aH(t-s)}
    \end{equation*}
    where $H:[0,\infty) \rightarrow [0, \infty)$ is the function given by $H(0) = 0$ and for $x>0$ we have
    \begin{equation*}
        H(x) \coloneqq \sqrt{x((\log 1/x) \vee 1))}.
    \end{equation*}
\end{lemma}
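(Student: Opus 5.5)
This is the modulus-of-continuity result for the support of SBM due to Dawson, Iscoe and Perkins \cite[Theorem 1.1]{dawson1989super}. The paper quotes it rather than reproves it, so the plan is to derive it through the Brownian snake representation of Theorem \ref{theorem sbm snake}.

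\emph{Setup.} First rescale to $\gamma=4$, so that $\mathcal{X}$ is the process built from the snakes $w^i$. By \eqref{equation sbm supp snake rep}, $\text{supp}(\mathcal{X}_t)$ is the closure of the heads $\hat{w}^i_r$ with $f^i_r=t$. Fix such an $r$ and let $s<t$. Take $r'$ to be the last time before $r$ with $f^i_{r'}=s$; it exists because $f^i$ is an excursion from $0$ passing through level $s$. On $[r',r]$ we have $f^i\ge s$, so the snake property gives $w^i_r(u)=w^i_{r'}(u)$ for $u\le s$. Hence $\hat{w}^i_{r'}=w^i_r(s)$, and this point lies in $\text{supp}(\mathcal{X}_s)$. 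Moreover
\begin{equation*}
|\hat{w}^i_r-\hat{w}^i_{r'}| = |w^i_r(t)-w^i_r(s)|.
\end{equation*}
The claim therefore reduces to a uniform bound on Brownian increments of duration $t-s$ along every path $w^i_r$: for all $i$, all $r$, and all $s<t$ with $t-s<\delta$ and $f^i_r=t$,
\begin{equation*}
|w^i_r(t)-w^i_r(s)|\le aH(t-s).
\end{equation*}
Taking closures, and using that thickenings of closed sets are closed, then gives the inclusion.

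\emph{Key estimate.} Each individual path $w^i_r$ is a Brownian motion, so Lévy's modulus of continuity gives the constant $\sqrt2$. The difficulty is uniformity over the continuum of indices $r$, that is, over the whole tree coded by $f^i$. I would handle this with a chaining and Borel--Cantelli argument on dyadic scales $2^{-n}$.

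\begin{enumerate}
\item Fix a height level $k2^{-n}$ within a bounded height range $[0,K]$. Look at the subtrees rooted at that level which reach level $(k+1)2^{-n}$. Their number is governed by the local time $L_{k2^{-n}}(\sigma)$ divided by $2^{-n}$, as in Lemma \ref{lemma poisson h downcrossings}, so it is of order $2^{n}$ times a finite random factor.
\item Along each such subtree the increments over height $2^{-n}$ are Gaussian with variance $2^{-n}$. A union bound gives
\begin{equation*}
\text{(number of pairs)}\times\exp\big(-a^2 n\log 2/2\big)\approx 2^{2n}\,2^{-a^2n/2},
\end{equation*}
which is summable exactly when $a>2$. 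This is where the threshold $a>2$ comes from, as opposed to $\sqrt2$ for a single path.
\item Within each subtree the increment must be controlled uniformly over the branching points, not only at its endpoints. I would use the Kolmogorov-type Hölder continuity of the snake with respect to the tree distance $\tilde d$ to absorb this maximal fluctuation, at a cost of a constant factor in the lower-order term. Alternatively, a reflection/maximal inequality for Brownian motion indexed along each ancestral line, which is the approach in \cite{dawson1989super}, does the same job.
\end{enumerate}

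\emph{Finishing.} Only finitely many excursions reach height $\delta_0>0$. For the many small excursions, their spatial displacement within time $t-s$ is handled by the same bound, since their heights are at most the time elapsed.

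\emph{Main obstacle.} I expect the hardest step to be the interpolation from dyadic times $s,t$ to arbitrary ones while keeping the sharp constant $a>2$. Counting paths, rather than just using the per-path Lévy modulus, pushes the exponent from $\sqrt2$ up to $2$, and the interpolation error must be kept $o(H(t-s))$ so that it does not spoil this constant. If that proves delicate, I would simply invoke \cite[Theorem 1.1]{dawson1989super} directly, which the paper's statement indicates is the intended justification.
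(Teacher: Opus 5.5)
The paper does not prove this lemma. It quotes it from \cite[Theorem 1.1]{dawson1989super}, so your fallback is exactly the paper's justification. Your snake argument is a genuinely different, self-contained route: in effect it is the snake version of the historical modulus of continuity. Your count $2^{2n}\cdot 2^{-a^2n/2}$ is the right explanation of the threshold $a>2$. As written, however, it has two gaps.

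\emph{First, the reduction needs the support identity at every time.}
\begin{itemize}
\item \eqref{equation sbm supp snake rep} holds almost surely for each fixed $y$, but the lemma quantifies over all pairs $(s,t)$ at once.
\item You cannot pass from rational times by weak continuity. It only gives lower semicontinuity of $s\mapsto\text{supp}(\mathcal{X}_s)$, so it gives no lower bound on $\text{supp}(\mathcal{X}_s)$ at an irrational $s$, and that lower bound is what your step uses.
\item Work with the version defined at every time by the jointly continuous local times. Since every $dL^i_t$ is carried by $\{f^i=t\}$, the inclusion $\text{supp}(\mathcal{X}_t)\subseteq\overline{\bigcup_i\{\hat w^i_r: f^i_r=t\}}$ then holds for all $t$ simultaneously.
\item Your claim $\hat w^i_{r'}\in\text{supp}(\mathcal{X}_s)$ still does not follow at exceptional levels. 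If $r'$ is a local minimum of $f^i$ with value $s$ (this occurs for a dense set of pairs $(s,t)$), then $r'$ is isolated in $\{f^i=s\}$. Hence $dL^i_s$ charges no neighbourhood of $r'$, and Lemma \ref{lemma support continuous integral} gives nothing there.
\end{itemize}
One way to supply the missing step is as follows. For rational $q<s$, the excursion of $f^i$ above $q$ straddling $r'$ reaches height $t>s$. The local-time profile of each of these countably many excursions is a.s.\ strictly positive at every level strictly between $q$ and its maximum (Ray--Knight). So there is $u\in\text{supp}(dL^i_s)$ such that $f^i$ stays at least $q$ between $u$ and $r'$. Then $|\hat w^i_u-\hat w^i_{r'}|$ is bounded by two path increments over time at most $s-q$, which vanish as $q\uparrow s$.

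\emph{Second, the key estimate is not carried out where the constant is decided, and the patch in your step 3 would lose it.}
\begin{itemize}
\item The union bound must run over pairs, not over subtrees. Each pair consists of a \emph{skeleton point} at height $j2^{-n}$, meaning a point with descendants at height $(j+1)2^{-n}$, together with its ancestor at a lower grid level.
\item Given $L^i_{j2^{-n}}(\sigma(f^i))$, the number of skeleton points at height $j2^{-n}$ is Poisson with mean $2^{n-1}L^i_{j2^{-n}}(\sigma(f^i))$. The total over heights in $[0,K]$ is still of order $2^{2n}$ with finite expected prefactor.
\item More importantly, if you work at the single scale $2^{-n}\approx t-s$, the errors from moving $s$ and $t$ to grid levels are of the same order as $H(t-s)$. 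A Kolmogorov--H\"older bound makes them of order $2^{-n(1/2-\eta)}\gg H(2^{-n})$. Either way the sharp constant is lost.
\end{itemize}
The missing idea is L\'evy's device from the proof of the Brownian modulus:
\begin{itemize}
\item Use grid spacing $2^{-n}$, but take all pairs of grid levels with $j-i\le 2^{n\epsilon}$, and choose $n$ so that $t-s\approx 2^{-n(1-\epsilon)}$.
\item The expected number of violations is then, up to a factor polynomial in $n$, at most $\sum_{m\le 2^{n\epsilon}}2^{2n}(m2^{-n})^{a^2/2}$. This is summable in $n$ for $a>2$ once $\epsilon$ is small.
\item The chaining errors through finer grids are $O(\sqrt{2^{-n}n})=o(H(t-s))$.
\end{itemize}

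With these two points supplied, your route is a proof. Without them, the argument rests on the citation, exactly as in the paper.
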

Note that the $\delta$ in Lemma \ref{lemma support of SBM thickening} is random. The important features of the function $H$ for us are that $H$ is continuous and increasing. We now give a simple corollary of Lemma \ref{lemma support of SBM thickening}.
\begin{corollary}\label{corollary SBM thickening}
    Let $\mu \in \mathcal{M}_F(\mathbb{R}^d)$ and $\gamma > 0$. Let $(\mathcal{X}_r)_{r \geq 0}$ be an $\text{SBM}(\gamma)$ started from $\mu$ on a probability space $(\Omega, \mathcal{F}, \mathbb{P}_\mu)$. Then $\mathbb{P}_{\mu}$-a.s. there exists $N<\infty$ such that for any $s$, $t \geq 0$ with $0 \leq t-s<1$ we have that
    \begin{equation*}
        \overline{\bigcup_{s \leq u \leq t} \text{supp}(\mathcal{X}_u)} \subseteq \text{supp}(\mathcal{X}_s)^{NH(t-s)}.
    \end{equation*}
    \begin{proof}
        If we take $a=3$ in Lemma \ref{lemma support of SBM thickening} and then for the random positive constant $\delta$ and for $0 < t-s<\delta$ we have that
        \begin{equation*}
            \bigcup_{s \leq u \leq t}\text{supp}(\mathcal{X}_u) \subseteq \bigcup_{s \leq u \leq t}\text{supp}(\mathcal{X}_s)^{aH(u-s)} = \text{supp}(\mathcal{X}_s)^{3H(t-s)}.
        \end{equation*}
        Now let $N = 4/\delta$ where $\delta$ is the (random) positive constant given in Lemma \ref{lemma support of SBM thickening} and let $s$, $t \geq 0$ with $0 < t-s < 1$. Then there exists $s_1, \dots, s_K$ be such that $s<s_1< \dots < s_K<t$ and $s_1-s<\delta$, $t-s_K<\delta$, $s_{i+1}-s_i<\delta$ for all $1, \dots, K-1$, and $K \leq 1/\delta$. Therefore, by applying Lemma \ref{lemma support of SBM thickening} we have 
        \begin{equation*}
            \bigcup_{s \leq u \leq s_1}\text{supp}(\mathcal{X}_u) \subseteq \text{supp}(\mathcal{X}_s)^{3H(s_1-s)} \subseteq \text{supp}(\mathcal{X}_s)^{3H(t-s)},
        \end{equation*}
        for $1 \leq i \leq K$ we have
        \begin{equation*}
            \bigcup_{s_i \leq u \leq s_{i+1}}\text{supp}(\mathcal{X}_u) \subseteq \text{supp}(\mathcal{X}_{s_i})^{3H(s_{i+1}-s_i)} \subseteq \text{supp}(\mathcal{X}_{s_i})^{3H(t-s)},
        \end{equation*}
        and finally
        \begin{equation*}
            \bigcup_{s_K \leq u \leq t}\text{supp}(\mathcal{X}_u) \subseteq \text{supp}(\mathcal{X}_{s_K})^{3H(t-s_K)} \subseteq \text{supp}(\mathcal{X}_{s_K})^{3H(t-s)}.
        \end{equation*}
        With the above we have
        \begin{equation*}
            \bigcup_{s_i \leq u \leq s_{i+1}}\text{supp}(\mathcal{X}_u) \subseteq \text{supp}(\mathcal{X}_{s_i})^{3H(t-s)} \subseteq \text{supp}(\mathcal{X}_{s_{i-1}})^{6H(t-s)} \subseteq \dots \subseteq \text{supp}(\mathcal{X}_s)^{3iH(t-s)},
        \end{equation*}
        since $\text{supp} (\mathcal{X}_{s_i}) \subseteq \cup_{s_{i-1} \leq u \leq s_i} \text{supp}(\mathcal{X}_u)$, and similarly $\bigcup_{s_K \leq u \leq t}\text{supp}(\mathcal{X}_u) \subseteq \text{supp}(\mathcal{X}_s)^{3KH(t-s)}$. Therefore we have
        \begin{equation*}
            \bigcup_{s \leq u \leq t}\text{supp}(\mathcal{X}_u) \subseteq \text{supp}(\mathcal{X}_s)^{3H(t-s)/\delta}
        \end{equation*}
        and increasing the $3$ to $4$ to define $N$ is simply so that we can ensure the closure of $\bigcup_{s \leq u \leq t}\text{supp}(\mathcal{X}_u)$ is contained in $\text{supp}(\mathcal{X}_s)^{NH(t-s)}$.
    \end{proof}
\end{corollary}
We can now apply this result to $\text{ooSBM}(4, c, \Tilde{c})$. These results all apply to $\text{ooSBM}(\gamma, c, \Tilde{c})$ for general $\gamma > 0$ with the simple time-change described in the proof of Theorem \ref{theorem equality ranges} earlier in this section.
\begin{lemma}\label{lemma oosbm finite propagation}
    Let $\mu_{\rm d}$, $\mu_{\rm a} \in \mathcal{M}_F(\mathbb{R}^d)$ and let $(\mathcal{Y}_t^{(\rm d)}, \mathcal{Y}_t^{(\rm a)})_{t \geq 0}$ be an $\text{ooSBM}(4, c, \Tilde{c})$ started from $(\mu_{\rm d}, \mu_{\rm a})$. Then we have $\mathbb{P}_{(\mu_{\rm d}, \mu_{\rm a})}$-a.s. there exists $N<\infty$ such that for all $0 \leq t < 1$ we have
    \begin{equation*}
        \overline{\bigcup_{0 \leq u \leq t} \text{supp}(\mathcal{Y}_u^{(\rm d)}) \cup \text{supp}(\mathcal{Y}_u^{(\rm a)})} \subseteq (\mu_{\rm d} \cup \mu_{\rm a})^{NH(t)}.
    \end{equation*}
    \begin{proof}
        This can be concluded from the result for the coupled processes in Lemma \ref{lemma oosbm range domination} and from Corollary \ref{corollary SBM thickening}.
    \end{proof}
\end{lemma}
\begin{corollary}\label{corollary oosbm finite propagation all times}
    Let $\mu_{\rm d}$, $\mu_{\rm a} \in \mathcal{M}_F(\mathbb{R}^d)$ and let $(\mathcal{Y}_t^{(\rm d)}, \mathcal{Y}_t^{(\rm a)})_{t \geq 0}$ be an $\text{ooSBM}(4, c, \Tilde{c})$ started from $(\mu_{\rm d}, \mu_{\rm a})$. Then $\mathbb{P}_{(\mu_{\rm d}, \mu_{\rm a})}$-a.s. there exist $(N_k, k \in \mathbb{N})$ finite constants such that for $k \in \mathbb{N}$ and for $s$, $t \in \mathbb{R}$ with $k \leq s < t < k+1$ we have
    \begin{equation*}
        \overline{\bigcup_{s \leq u \leq t} \text{supp}(\mathcal{Y}_u^{(\rm d)}) \cup \text{supp}(\mathcal{Y}_u^{(\rm a)})} \subseteq \text{supp}(\mathcal{Y}_s^{(\rm d)}) \cup \text{supp}(\mathcal{Y}_s^{(\rm a)})^{N_kH(t-s)}.
    \end{equation*}
    \begin{proof}
        Lemma \ref{lemma oosbm finite propagation}, along with the Markov property for ooSBM, establishes the claim for $k=0$. The general result can then be derived from the $k=0$ case and then again by the Markov property of ooSBM.
    \end{proof}
\end{corollary}
We can now prove the increasing monotone support property stated in Theorem \ref{theorem increasing support} and the dimension result in that theorem follows immediately from this.
\begin{proof}[Proof of Theorem \ref{theorem increasing support}]    
    Let $\mu_{\rm d}$, $\mu_{\rm a} \in \mathcal{M}_F(\mathbb{R}^d)$ and let $(\mathcal{Y}_t^{(\rm d)}, \mathcal{Y}_t^{(\rm a)})_{t \geq 0}$ be an $\text{ooSBM}(4, c, \Tilde{c})$ started from $(\mu_{\rm d}, \mu_{\rm a})$. We first note that 
    \begin{equation}\label{equation increasing support rationals}
        \text{supp}(\mathcal{Y}_r^{(\rm d)}) \cup \text{supp}(\mathcal{Y}_r^{(\rm a)}) \subseteq \text{supp}(\mathcal{Y}_s^{(\rm d)}) \cup \text{supp}(\mathcal{Y}_s^{(\rm a)}) \hspace{0.5cm} \forall r,s \in \mathbb{Q}, r < s \hspace{0.25cm} \mathbb{P}_{(\mu_{\rm d}, \mu_{\rm a})}\text{-a.s.}
    \end{equation}
    This follows for $r=0$ by Proposition \ref{proposition inclusion at time 0} and the representation for ooSBM given in Theorem \ref{theorem ooSBM ffd convergence} and it can be extended to general $r \in \mathbb{Q}$ by the Markov property. As $r$, $s \in \mathbb{Q}$ comprises of only countably many cases we have that all the events hold simultaneously $\mathbb{P}_{(\mu_{\rm d}, \mu_{\rm a})}$-a.s.
    \par
    We now show that $\mathbb{P}_{(\mu_{\rm d}, \mu_{\rm a})}$-a.s. for all $k \in \mathbb{N}$ and for all $r$, $s \geq 0$ with $k < r < s < k + 1$ we have $\text{supp}(\mathcal{Y}_r^{(\rm d)}) \cup \text{supp}(\mathcal{Y}_r^{(\rm a)}) \subseteq \text{supp}(\mathcal{Y}_s^{(\rm d)}) \cup \text{supp}(\mathcal{Y}_s^{(\rm a)})$. The increasing support property at all times then follows immediately from this along with \eqref{equation increasing support rationals}.
    \par
    We consider two cases: the first is that $s \in \mathbb{Q}$, and the second is that $r \in \mathbb{Q}$. The general result follows from these cases as there exists a rational number between any pair of distinct real numbers. Throughout the rest of the proof we work on the $\mathbb{P}_{(\mu_{\rm d}, \mu_{\rm a})}$-a.s. event given by the intersection of the event described in \eqref{equation increasing support rationals} and the event described in Corollary \ref{corollary oosbm finite propagation all times}.
    \par
    Consider the first case where $s \in \mathbb{Q}$ and $r \in \mathbb{R}$. There exists a sequence $(q_n)_{n \in \mathbb{N}}$ such that $q_n \rightarrow r$, $q_n \in \mathbb{Q}$ and $q_n < s$ for all $n \in \mathbb{N}$. Then we have that for all $n \in \mathbb{N}$ that 
    \begin{equation*}
        \mathcal{Y}_r^{(\rm d)}\Big((\mathbb{R}^d \backslash \text{supp}(\mathcal{Y}_s^{(\rm d)})) \cap (\mathbb{R}^d \backslash \text{supp}(\mathcal{Y}_s^{(\rm a)}))\Big) \leq \liminf_{n \rightarrow \infty} \mathcal{Y}_{q_n}^{(\rm d)}\Big((\mathbb{R}^d \backslash \text{supp}(\mathcal{Y}_s^{(\rm d)})) \cap (\mathbb{R}^d \backslash \text{supp}(\mathcal{Y}_s^{(\rm a)}))\Big) = 0,
    \end{equation*}
    by the Portmanteau Theorem and therefore $\text{supp}(\mathcal{Y}_r^{(\rm d)}) \subseteq \text{supp}(\mathcal{Y}_s^{(\rm d)}) \cup \text{supp}(\mathcal{Y}_s^{(\rm a)})$. The same type of argument also shows that $\text{supp}(\mathcal{Y}_r^{(\rm a)}) \subseteq \text{supp}(\mathcal{Y}_s^{(\rm d)}) \cup \text{supp}(\mathcal{Y}_s^{(\rm a)})$. 
    \par
    Now assume that $r \in \mathbb{Q}$ and $s \in \mathbb{R}$.  Let $(q_n, n \in \mathbb{N})$ with $q_n \rightarrow s$, $q_n \in (s, k+1)$, $q_n \in \mathbb{Q}$. We can see by Corollary \ref{corollary oosbm finite propagation all times} that
    \begin{equation*}
        \text{supp}(\mathcal{Y}_r^{(\rm d)}) \subseteq \bigcap_{n \in \mathbb{N}} (\text{supp}(\mathcal{Y}_{q_n}^{(\rm d)}) \cup \text{supp}(\mathcal{Y}_{q_n}^{(\rm a)})) \subseteq \bigcap_{n \in \mathbb{N}} (\text{supp}(\mathcal{Y}_s^{(\rm d)}) \cup \text{supp}(\mathcal{Y}_s^{(\rm a)}))^{N_kH(q_n-s)} =: A.
    \end{equation*}
    We show that the set $A$ is a subset of $\text{supp}(\mathcal{Y}_s^{(\rm d)}) \cup \text{supp}(\mathcal{Y}_s^{(\rm a)})$ and this concludes our proof. Let $x \in A$. Then for all $n \in \mathbb{N}$ there exists $y_n \in \text{supp}(\mathcal{Y}_s^{(\rm d)}) \cup \text{supp}(\mathcal{Y}_s^{(\rm a)})$ such that $|y_n - x| < N_kH(q_n-s)$. The sequence $(y_n, n \in \mathbb{N})$ converges to $x$ and by the closure of $\text{supp}(\mathcal{Y}_s^{(\rm d)}) \cup \text{supp}(\mathcal{Y}_s^{(\rm a)})$ we have that
    \newline
    $x \in \text{supp}(\mathcal{Y}_s^{(\rm d)}) \cup \text{supp}(\mathcal{Y}_s^{(\rm a)})$ as required.
    \par
    The dimension result now follows by the monotonic increasing support property and Proposition \ref{proposition dimension oosbm}. 
    \par
    The case for general $\gamma > 0$ follows from the $\gamma = 4$ case with the same time-change arguments used in the proof of Theorem \ref{theorem equality ranges} given earlier in this section.
\end{proof}
\section{Moment properties of ooSBM}\label{section moments}
In this section we work with $\text{ooBM}(c, \Tilde{c})$ from Definition \ref{definition bbm and oobbm} as well as $\text{ooSBM}(\gamma, c, \Tilde{c})$. As the natural state space for $\text{ooBM}(c, \Tilde{c})$ is $\mathbb{R}^d \times \{0,1\}$ and our state space for $\text{ooSBM}(\gamma, c, \Tilde{c})$ is $\mathcal{M}_F((\mathbb{R}^d)^2)$ we begin with Table \ref{table different spaces} which gives summary of the different representations given for processes in this paper. Every row in the table corresponds to a set of equivalent elements in the different representations.
\begin{table}[H]
\caption{Different spaces considered in this paper}
\label{table different spaces}
\begin{center}
\begin{tabular}{||c c c c||} 
 \hline
 $\mathbb{R}^d \times [0,\infty)$ & $\mathbb{R}^d \times \{0,1\}$ & $\mathcal{M}_F(\mathbb{R}^d \times \{0,1\})$ & $\mathcal{M}_F((\mathbb{R}^d)^2)$ \\ [0.5ex] 
 \hline\hline
 $(b,0)$ & $(b,1)$ & $\delta_{(b,1)}$ & $(0,\delta_b)$ \\ 
 \hline
 $(b,E)$ & $(b,0)$ & $\delta_{(b,0)}$ & $(\delta_b, 0)$ \\
 where $E \sim \text{Exp}(\Tilde{c})$ &  &  &  \\
 \hline
 -- & -- & $\mu_{\rm d} \otimes \delta_0 + \mu_{\rm a} \otimes \delta_1$ & $(\mu_{\rm d}, \mu_{\rm a})$ \\
 \hline
\end{tabular}
\end{center}
\end{table}
The first column in Table \ref{table different spaces} represents the state space for Brownian motion in $\mathbb{R}^d$ and an independent subordinator on $[0,\infty)$. This process was the spatial motion for the bivariate snake constructed in Definition \ref{definition oo snake} which we refer to as the on/off Brownian snake. Furthermore the process can be mapped to an ooBM, see Lemma \ref{lemma single particle motion with age process}, and the state space for ooBM is the second column $\mathbb{R}^d \times \{0,1\}$. The third and fourth columns are spaces of finite measures on $\mathbb{R}^d \times \{0,1\}$ and $(\mathbb{R}^d)^2$, two homeomorphic spaces and in this section we use both spaces to represent ooSBM. On the other hand, the second row of the table represents the initial state that corresponds to a single active particle at position $b$, and the third row represents the initial state that corresponds to a single dormant particle at position $b$. Finally, the fourth row corresponds to a general measure-valued initial condition for ooSBM in its two different parametrisations as either a measure on $\mathbb{R}^d \times \{0,1\}$ or as a measure on $(\mathbb{R}^d)^2$. For measures $\mu$, $\nu$ we write $\mu \otimes \nu$ to denote the product measure.
\par
We can calculate moment results using Theorem \ref{theorem ooSBM ffd convergence}. We recall on/off Brownian motion, see Definition \ref{definition bbm and oobbm} as well as Lemma \ref{lemma single particle motion with age process}. We let $P^{\%}_{(b,i)}$ and $E^{\%}_{(b,i)}$ for $(b,i) \in \mathbb{R}^d \times \{0,1\}$ be the probability measure and the expectation operator for an $\text{ooBM}(c, \Tilde{c})$, $(U_r, I_r)_{r \geq 0}$, started from $(b,i)$. We also recall $\xi = (B,S)$ where $B$ is a Brownian motion on $\mathbb{R}^d$ and $S$ is an independent subordinator with unit drift and jump measure $\pi(dx) = c \Tilde{c} e^{-\Tilde{c}x} dx$ and finally we recall the processes $(H^{(s)}(\xi), s \geq 0)$ and $(A^{(s)}(\xi), s \geq 0)$ from Lemma \ref{lemma single particle motion with age process}. All our results occur at fixed times so we can state the results in terms of the continuous modification $(\mathcal{Y}^{(\rm d)}_s, \mathcal{Y}^{(\rm a)}_s)_{s \geq 0}$ defined in Corollary \ref{corollary continuous modifications} and the same time-change rescaling arguments work as in the proof of Theorem \ref{theorem equality ranges} in Section \ref{section range coupling proof} so our results apply to $\text{ooSBM}(\gamma, c, \Tilde{c})$ for general $\gamma > 0$.
\begin{proposition}\label{proposition expectation oosbm}
Let $(b,i) \in \mathbb{R}^d \times \{0,1\}$ and consider an $\text{ooBM}(c, \Tilde{c})$, $(U_r, I_r)_{r \geq 0}$, started from $(b,i)$. Then, for any continuous bounded function $\Phi: \mathbb{R}^d \rightarrow [0,\infty)$ we have 
    \begin{equation*}
        \mathbb{E}_{(\mu_{\rm d}, \mu_{\rm a})} (\langle \mathcal{Y}^{(\rm a)}_s, \Phi \rangle ) = \int_{b \in \mathbb{R}^d} E^{\%}_{(b,1)}[\Phi(U_s)\mathbbm{1}_{I_s=1}] \mu_{\rm a}(db) + \int_{b \in \mathbb{R}^d} E^{\%}_{(b,0)}[\Phi(U_s)\mathbbm{1}_{I_s = 1}] \mu_{\rm d}(db);
    \end{equation*}
    \begin{equation*}
        \mathbb{E}_{(\mu_{\rm d}, \mu_{\rm a})} (\langle \mathcal{Y}^{(\rm d)}_s, \Phi \rangle ) = \int_{b \in \mathbb{R}^d} E^{\%}_{(b,1)}[\Phi(U_s)\mathbbm{1}_{I_s=0}] \mu_{\rm a}(db) + \int_{b \in \mathbb{R}^d} E^{\%}_{(b,0)}[\Phi(U_s)\mathbbm{1}_{I_s=0}] \mu_{\rm d}(db).
    \end{equation*}
    \begin{proof}
    We first consider the case $\gamma = 4$ and we deal with the active component. Recall from Definition \ref{definition oo snake} the on/off Brownian snake $\sum_{i \in I} \delta_{((b_i,s_i), f^i, x^i)}$ and also recall that $w^i$ denotes the Brownian component of the snake $x^i$. From Theorem \ref{theorem ooSBM ffd convergence} we have that
    \begin{equation*}
        \langle \mathcal{Y}^{(\rm a)}_s, \Phi \rangle = \sum_{i \in I: s_i<s} \int_{t=0}^{\sigma(f^i)} \Phi(\hat{w}^i_t)\ell^{\rm a}_{s, i}(dt)
    \end{equation*}
    and therefore we have
    \begin{equation*}
        \mathbb{E}_{(\mu_{\rm d}, \mu_{\rm a})} (\langle \mathcal{Y}^{(\rm a)}_s, \Phi \rangle) = \mathbb{E}_{(\mu_{\rm d}, \mu_{\rm a})} \Big(\sum_{i \in I: s_i<s} \int_{t=0}^{\sigma(f^i)} \Phi(\hat{w}^i_t)\ell^{\rm a}_{s, i}(dt)\Big).
    \end{equation*}
    By the compensation formula for Poisson random measures this is equal to 
    \begin{equation*}
            \int_{b \in \mathbb{R}^d} \mathbb{N}_{(b,0)}\Big(\int_{t=0}^{\sigma(f)} \Phi(\hat{W}_t)\ell^{\rm a}_s(dt)\Big) \mu_{\rm a}(db) + \int_{b \in \mathbb{R}^d} \int_{q=0}^s \mathbb{N}_{(b,q)}\Big(\int_{t=0}^{\sigma(f)} \Phi(\hat{W}_t)\ell^{\rm a}_s(dt)\Big) \nu_s(dq) \mu_{\rm d}(db)
        \end{equation*}
    where $\nu_s(dq) \coloneqq \mathbbm{1}_{0 \leq q \leq s}\Tilde{c}e^{-\Tilde{c}q}$. For the first excursion measure we can define $D = \mathbb{R}^d \times [0,\infty) \backslash \{s\}$ and set $\tau(\xi) = \inf\{y \geq 0: \xi_y \notin D\}$. Then by \eqref{equation local time general} from Lemma \ref{lemma snake exit open set general} we have, (recalling that $\xi =(B,S)$ and $\tau = H^{(s)}(\xi)$ and that the subordinator component in this case does not jump over the level $s$, meaning that $\tau < \infty$ iff $A^{(s)} = 1$) that
    \begin{equation}\label{equation expectation active}
        \mathbb{N}_{(b,0)}\Big(\int_{t=0}^{\sigma(f^i)} \Phi(\hat{W}_t)\ell^{\rm a}_s(dt)\Big) = E^\xi_{(b,0)}\Big[\Phi(B(\tau)) \mathbbm{1}_{\tau < \infty}\Big] = E^\xi_{(b,0)}\Big[\Phi(B(H^{(s)}(\xi)))\mathbbm{1}_{A^{(s)}(\xi) = 1}\Big].
    \end{equation}
    We have similar expressions for the other term and so we obtain
        \begin{multline*}
            \mathbb{E}_{(\mu_{\rm d}, \mu_{\rm a})} (\langle \mathcal{Y}^{(\rm a)}_s, \Phi \rangle) = \int_{b \in \mathbb{R}^d} E^\xi_{(b,0)}[\Phi(B(H^{(s)}(\xi)))\mathbbm{1}_{A^{(s)}(\xi) = 1}] \mu_{\rm a}(db) \\ + \int_{b \in \mathbb{R}^d} \int_{s=0}^q E^{\xi}_{(b,q)}[\Phi(B(H^{(s)}(\xi)))\mathbbm{1}_{A^{(s)}(\xi) = 1}] \Tilde{c}e^{-\Tilde{c}q} dq \mu_{\rm d}(db);
        \end{multline*}
    For the dormant case (with $\gamma = 4$) we cannot immediately apply Lemma \ref{lemma snake exit open set general} as we did for the active case. As in Theorem \ref{theorem ooSBM ffd convergence} we can apply the same result with $D = \mathbb{R}^d \times (s,\infty)$ and obtain
    \begin{equation}\label{equation expectation}
        \mathbb{N}_{(b,0)}\Big(\int_{t=0}^{\sigma(f^i)} \Phi(\hat{W}_t)\ell_s(dt)\Big) = E^\xi_{(b,0)}\Big[\Phi(B(\tau)) \mathbbm{1}_{\tau < \infty}\Big] = E^\xi_{(b,0)}\Big[\Phi(B(H^{(s)}(\xi)))\Big],
    \end{equation}
    and it is the difference of \eqref{equation expectation} and \eqref{equation expectation active} which gives
    \begin{equation*}
        \mathbb{N}_{(b,0)}\Big(\int_{t=0}^{\sigma(f^i)} \Phi(\hat{W}_t)\ell^{\rm d}_s(dt)\Big) =  E^\xi_{(b,0)}\Big[\Phi(B(H^{(s)}(\xi)))\mathbbm{1}_{A^{(s)}(\xi) = 0}\Big].
    \end{equation*}
    We obtain similar expressions for each of the expectations and obtain
        \begin{multline*}
            \mathbb{E}_{(\mu_{\rm d}, \mu_{\rm a})} (\langle \mathcal{Y}^{(\rm d)}_s, \Phi \rangle ) = \int_{b \in \mathbb{R}^d} E^\xi_{(b,0)}[\Phi(B(H^{(s)}(\xi)))\mathbbm{1}_{A^{(s)}(\xi) = 0}] \mu_{\rm a}(db) \\ + \int_{b \in \mathbb{R}^d} \int_{s=0}^q E^{\xi}_{(b,q)}[\Phi(B(H^{(s)}(\xi)))\mathbbm{1}_{A^{(s)}(\xi) = 0}] \Tilde{c}e^{-\Tilde{c}q} dq \mu_{\rm d}(db) + e^{-\Tilde{c}s} \int_{b \in \mathbb{R}^d} \Phi(b) \mu_{\rm d}(db).
        \end{multline*}
    The expectations $E^\xi$ with respect to $\xi$ are equal to the expectations $E^{\%}$ with respect to ooBM by the equality in distribution given in Lemma \ref{lemma single particle motion with age process} along with the fact that the final term is the expectation for the dormant component for the initial dormant mass that has not moved to the active state before time $s$. 
    \par
    The general $\gamma > 0$ case follows from rescaling parameters and a time-change for our construction of ooSBM, see the proof of Theorem \ref{theorem equality ranges} in Section \ref{section range coupling proof}.
\end{proof}
\end{proposition}
We state a corollary that connects the moments of ooSBM to ooBM. As ooBM has a natural state space in $\mathbb{R}^d \times \{0,1\}$ we rewrite everything in terms of measures on $\mathbb{R}^d \times \{0,1\}$, see Remark \ref{remark different measure representations}. We consider an ooSBM $(\mathcal{Y}''_s)_{s \geq 0}$ of the form $\mathcal{Y}''_s \coloneqq \mathcal{Y}^{(\rm d)}_s \otimes \delta_0 + \mathcal{Y}^{(\rm a)}_s \otimes \delta_1$ for dormant and active components $\mathcal{Y}^{(\rm d)}_s$, $\mathcal{Y}^{(\rm a)}_s$.
\begin{corollary}
    Let $\Phi_0$, $\Phi_1$ be continuous bounded functions $\mathbb{R}^d \rightarrow [0,\infty)$. Let $\Phi: \mathbb{R}^d \times \{0,1\} \rightarrow [0,\infty)$ be given by $\Phi((b,0)) = \Phi_0(b)$ and $\Phi((b,1)) = \Phi_1(b)$. Similarly, let $\mu_{\rm d}$, $\mu_{\rm a} \in \mathcal{M}_F(\mathbb{R}^d)$ and then let $\mu$ be the finite measure on $\mathbb{R}^d \times \{0, 1\}$ given by $\mu = \mu_{\rm d} \otimes \delta_0 + \mu_{\rm a} \otimes \delta_1$. Let $\gamma>0$ and let $\mathbb{E}_\mu$ be the expectation operator for an $\text{ooSBM}(\gamma, c, \Tilde{c})$, which we denote by $(\mathcal{Y}''_s)_{s \geq 0}$, on $\mathcal{M}_F(\mathbb{R}^d \times \{0,1\})$ with initial state $\mu$. Finally, let $(K^{\%}_s)_{s \geq 0}$ be the transition semigroup for $\text{ooBM}(c, \Tilde{c})$. Then we have
    \begin{equation*}
        \mathbb{E}_\mu(\langle \mathcal{Y}''_s, \Phi \rangle) = \langle \mu, K^{\%}_s \Phi \rangle.
    \end{equation*}
    \begin{proof}
        This result simply pulls together the different parts of the expectations given in Proposition \ref{proposition expectation oosbm}. Specifically, we have
        \begin{equation*}
            \mathbb{E}_\mu(\langle \mathcal{Y}''_s, \Phi \rangle) = \mathbb{E}_{(\mu_{\rm d}, \mu_{\rm a})} (\langle \mathcal{Y}^{(\rm d)}_s, \Phi_0 \rangle) + \mathbb{E}_{(\mu_{\rm d}, \mu_{\rm a})} (\langle \mathcal{Y}^{(\rm a)}_s, \Phi_1 \rangle).
        \end{equation*}
        The integrals in the equations correspond to the parts of the expectation, i.e. we have
        \begin{equation*}
            \langle \mu_{\rm d} \otimes \delta_0, K^{\%}_s \Phi \rangle = \int_{b \in \mathbb{R}^d} E^{\%}_{(b,0)}[\Phi_0(U_s)\mathbbm{1}_{I_s=0}] \mu_{\rm d}(db) + \int_{b \in \mathbb{R}^d} E^{\%}_{(b,0)}[\Phi_1(U_s)\mathbbm{1}_{I_s = 1}] \mu_{\rm d}(db)
        \end{equation*}
        and we also have
        \begin{equation*}
            \langle \mu_{\rm a} \otimes \delta_1, K^{\%}_s \Phi \rangle = \int_{b \in \mathbb{R}^d} E^{\%}_{(b,1)}[\Phi_0(U_s)\mathbbm{1}_{I_s=0}] \mu_{\rm a}(db) + \int_{b \in \mathbb{R}^d} E^{\%}_{(b,1)}[\Phi_1(U_s)\mathbbm{1}_{I_s=1}] \mu_{\rm a}(db).
        \end{equation*}
    \end{proof}
\end{corollary}
\begin{remark}
    In \cite[Proposition 2.21]{blath2023off} we note that the expectation that the first moment measure result given in Proposition \ref{proposition existence of ooSBM} is identical to the expectation result given for $\text{ooBBM}(x, c, \Tilde{c})$ for any $x > 0$. Therefore for any $\gamma>0$ we have the convergence in moments of $\text{ooBBM}(\gamma/\varepsilon, c, \Tilde{c})$ with particles rescaled to size $\varepsilon$ to the moments of $\text{ooSBM}(\gamma, c, \Tilde{c})$ as $\varepsilon$ goes to $0$ as well as the already established convergence in distribution.
\end{remark}
The expectations in Proposition \ref{proposition expectation oosbm} appear difficult to evaluate analytically because the length of time the ooBM is in the active state is random, and therefore the spatial location is an integral evaluated over Brownian paths of differing lengths. However, when we apply the formulas above with the function $\Phi \equiv 1$ the problem reduces to a two-state continuous-time Markov chain with states $\{0,1\}$ and rates $c$, $\Tilde{c}$. It is a simple calculation to show this has transition semigroup $P(t) = P_{ij}(t)$, for $t \geq 0$ and $i$, $j \in \{0,1\}$,
\begin{equation*}
    \begin{pmatrix}
        \frac{c+ \Tilde{c}e^{-t(c+\Tilde{c})}}{c+\Tilde{c}} & \frac{\Tilde{c}(1-e^{-t(c+\Tilde{c})})}{c+\Tilde{c}} \\
        \frac{c(1-e^{-t(c+\Tilde{c})})}{c+\Tilde{c}} & \frac{ce^{-t(c+\Tilde{c})} + \Tilde{c}}{c+\Tilde{c}}
    \end{pmatrix}
\end{equation*}
and for any $b \in \mathbb{R}^d$ we have that $P^{\%}_{(b,i)}(I_t = j) = P_{ij}(t)$. We are now in a position to prove Theorem \ref{theorem expectation total masses}.
\begin{proof}[Proof of Theorem \ref{theorem expectation total masses}]
    We use Proposition \ref{proposition expectation oosbm} with $\Phi \equiv 1$. We first observe that for an $\text{ooBM}(c, \Tilde{c})$,
    \newline
    $(U_r, I_r)_{r \geq 0}$, we have
    \begin{equation*}
        E^{\%}_{(b,1)}[\mathbbm{1}_{I_t=1}] = P_{11}(t) = \frac{ce^{-(c + \Tilde{c})t} + \Tilde{c}}{c + \Tilde{c}}; 
    \end{equation*}
    and similarly for the other ooBM terms. Therefore we have
    \begin{equation*}
        \mathbb{E}(\norm{\mathcal{Y}^{(\rm a)}_t}) = \mathbb{E}_{(\mu_{\rm d}, \mu_{\rm a})} (\langle \mathcal{Y}^{(\rm a)}_t, \Phi \rangle) = \int_{b \in \mathbb{R}^d} \frac{ce^{-t(c+\Tilde{c})} + \Tilde{c}}{c+\Tilde{c}} \mu_{\rm a}(db) + \int_{b \in \mathbb{R}^d} \frac{\Tilde{c}(1-e^{-t(c+\Tilde{c})})}{c+\Tilde{c}} \mu_{\rm d}(db)
    \end{equation*}
    which is equal to 
    \begin{equation*}
        \norm{\mu_{\rm d}}\frac{\Tilde{c}(1-e^{-t(c+\Tilde{c})})}{c+\Tilde{c}} + \norm{\mu_{\rm a}} \frac{ce^{-t(c+\Tilde{c})} + \Tilde{c}}{c+\Tilde{c}}.
    \end{equation*}
    The method is the same to calculate $\mathbb{E}(\norm{\mathcal{Y}^{(\rm d)}_t})$.
\end{proof}
\vspace{0.5cm}
\renewcommand{\abstractname}{Acknowledgements}
\begin{abstract}
    This research has been supported (MB) by the EPSRC Centre for Doctoral Training in Mathematics of Random Systems: Analysis, Modelling and Simulation (EP/S023925/1) while at the University of Oxford from October 2020 to October 2024. MB also acknowledges support from the German Research Foundation through grant 443227151 while at the Universit\"at zu L\"ubeck from November 2024 to August 2025 as well as the support from the EPSRC Grant EP/W006227/1 while at the University of Warwick from September 2025 onwards. This research has also been supported (DJ) by DFG IRTG 2544 Berlin-Oxford by DFG under Germany’s Excellence Strategy – The Berlin Mathematics Research Center MATH+ (EXC-2046/1, project ID 390685689, BMS Stipend). The author (DJ) acknowledges funding by the Deutsche Forschungsgemeinschaft (DFG, German Research Foundation) – CRC/TRR 388 "Rough Analysis, Stochastic Dynamics and Related Fields“ – Project ID 516748464.
    \par
    Both authors would like to thank (in alphabetical order) Jochen Blath, Alison Etheridge and Matthias Winkel for their helpful discussions with this project, as well as  Jean-Fran\c{c}ois Delmas and Christina Goldschmidt for the helpful discussion during the PhD thesis viva (MB).
\end{abstract}

\printbibliography

\end{document}